\newcommand{\Plm}{(P_1)}
\newcommand{\intA}{\int_{\R^N}(|\nabla_A u_{\lambda,\mu}|^2+u_{\lambda,\mu}^2)dx} 
\newcommand{\intf}{\int_{\R^N}a_{\lambda}(x)| u|^q dx}
\newcommand{\intg}{\int_{\R^N}b_{\mu}(x)| u|^p dx} 
\newcommand{\jf}{J_{\lambda,\mu}} 
\newcommand{\ume}{u_{\lambda,\mu}^-}
\newcommand{\uma}{u_{\lambda,\mu}^+}
\newcommand{\J}{\mbox{$J_{\lambda}$}}
\newcommand{\HA}{\mbox{$H^1_A (\R^N)$}}
\newcommand{\HAo}{\mbox{$H_A^1(\R^N)\setminus \{0\} $}}
\newcommand{\C}{\mathbb{C}}
\newcommand{\N}{\mathbb{N}}
\newcommand{\R}{{\mathbb R}}
\newtheorem{theorem}{Theorem}[section]
\newtheorem{remark}[theorem]{Remark}
\newtheorem{prop}[theorem]{Proposition}
\newtheorem{lemma}[theorem]{Lemma} 
\newtheorem{definition}[theorem]{Definition}
\title{Existence, multiplicity and regularity for a Schrödinger equation with magnetic potential involving sign-changing weight function}
\author{
	de Paiva, Francisco Odair Vieira \thanks{F.O.V.P. received research grants from FAPESP 17/16108-6. } \\
	Departamento de Matem\'atica, UFSCar\\
	S\~{a}o Carlos, SP,  13560-970 Brazil\\
	\texttt{franciscoodair@gmail.com} \\
	\And
	de Souza Lima, Sandra Machado \thanks{S.M.S.L. was supported by CAPES/Brazil and the paper was completed while the second author was visiting the Departament of Mathematics of UFJF, whose hospitality she gratefully acknowledges.  }\\
	Departamento de Ciências Exatas, Biológicas e da Terra\\ INFES-UFF\\
	Santo Antônio de Pádua - RJ, 28470-000, Brazil\\
	\texttt{sandra.msouzalima@gmail.com} \\
	\And
	Miyagaki, Olimpio Hiroshi  \thanks{ O. H. M. received research grants from CNPq/Brazil 307061/2018-1, FAPEMIG CEX APQ 00063/15 and INCTMAT/CNPQ/Brazil. }\\
	Departamento de Matemática, UFJF\\
	Juiz de Fora, MG, 36036-900, Brazil\\
	\texttt {Corresponding author:{ohmiyagaki@gmail.com}} \\
}
\begin{document}
	\maketitle
	
	\begin{abstract}
	 In this paper we consider the following class of elliptic problems
	$$- \Delta_A u + u = a_{\lambda}(x) |u|^{q-2}u+b_{\mu}(x) |u|^{p-2}u ,\,\, x\in \R^N$$
	where $1<q<2<p<2^*-1= \frac{N+2}{N-2}$, $a_{\lambda}(x)$ is a sign-changing weight function, $b_{\mu}(x)$ have some aditional conditions, $u \in H^1_A(\R^N)$ and $A:\R^N \rightarrow\R^N$ is a magnetic potential. Exploring the relationship between the Nehari manifold and fibering maps, we will discuss the existence, multiplicity and regularity of solutions.
	\end{abstract}

	\keywords{sign-changing weight functions \and Magnetic Potential \and Nehari Manifold \and Fibering map}

\noindent 2010 Mathematics Subject Classifications: 35Q60, 35Q55,35B38, 35B33.

\section{Introduction}

In this work we are interested in studying the existence, multiplicity of solutions for this class of elliptic problem. We will address the following concave-convex elliptic problem
$$
\left\{ \begin{array} [c]{ll}
- \Delta_A u + u = a_{\lambda}(x) |u|^{q-2}u+b_{\mu}(x) |u|^{p-2}u ,  \, \, x\in \, \,
\R^N, & \\
u \in \HA,&\\

\end{array}
\right.\leqno {\Plm}
$$
\noindent where $N\geq 3 $, $-\Delta_A =(-i\nabla+A)^2$, $1<q<2<p<2^*= \frac{2N}{N-2}$,
$a_{\lambda}(x)$ is a family of functions that can change signal, $b_{\mu}(x)$ is continuous and satisfies some additional conditions, $u:\R^N \rightarrow \C$ with
$u \in H^1_A(\R^N)$ (such space will be defined later),  $\lambda > 0$ and $\mu > 0 $ are real parameters, $A:\R^N \rightarrow\R^N$ is a magnetic potential in $L^2_{loc}(\R^N,\R^N)$. In these case we will try to show the existence of three solutions and also prove their regularity.

We will make use of the magnetic operator in which we work with the Magnetic Laplacian. In non-relativistic quantum physics, the Hamiltonian associated with a charged particle in an electromagnetic field is given by $(i\nabla -A)^2 + V$, where $A: \R^N \rightarrow \R^N$ is the potential magnetic and $ V: \R^N \rightarrow \R $  is the electrical potential. Its importance in physics was discussed in Alves and Figueiredo \cite{ClauFig} and in Arioli and Szulkin \cite{ArSz}.

The problem $(P_1)$ with $ A = 0 $ has a vast literature. We start by citing Ambrosetti, Brezis and Cerami \cite{AmbBre}, where the following problem is considered
$$
\left\{ \begin{array}[c]{ll}
- \Delta  u + u =  \lambda u^{q-1}+  u^{p-1} \, ,x\;  \mbox{in} \, \, \Omega, & \\
u> 0 \, \, \mbox{in}\, \, \Omega,&\\
u= 0 \, \, \mbox{in}\, \, \partial\Omega,&\\
\end{array}
\right.
$$
where $\Omega $ is a bounded regular domain of $\R^N$ ($N>3$),  with smooth boundary and $1<q<2<p\leq2^*$.
Combining the method of sub and super-solutions with the variational method, the authors proved the existence of a certain $\lambda_0>0$ such that there are two solutions when $\lambda\in (0,\lambda_0)$, one solution if $\lambda=\lambda_0$ and no solutions if $\lambda>\lambda_0$. 

The concave-convex problem like
$$
\left\{ \begin{array}[c]{ll}
- \Delta  u + u = \lambda f(x) u^{q-1}+  u^{p-1} \, ,x\; \mbox{in} \, \, \Omega, & \\
u> 0 \, \, \mbox{in}\, \, \Omega,&\\
u= 0 \, \, \mbox{in}\, \, \partial\Omega,&\\
\end{array}
\right.
$$
with $f\in C(\overline{\Omega})$ a sign changing function and $1<q<2<p\leq2^*$, was studied, e.g., by Wu in \cite {Wu2}. It proves that the problem has at least two positive solutions for values of $ \lambda $ small enough.

The following problem was studied by de Paiva \cite {paiva}
$$
\left\{ \begin{array} [c]{ll}
- \Delta u = a(x)u^p + \lambda b(x)u^q , x \in \Omega,&\\
u(x) = 0, x \in \partial \Omega ,&\\
\end{array}          \right.\leqno {  }
$$
with $0 < p < 1 < q \leq  2^*  -  1$, $a$ a function that can change sign and $ b \geq 0 $. It proves the existence of a certain $ \lambda^* \in (0, \infty) $ such that the above problem has a nonnegative solution whenever $ 0 <\lambda <\lambda^* $ and no solution when $\lambda  >\lambda^* $.

From this, many studies have been devoted to the analysis of existence and multiplicity of concave-convex elliptic problems in bounded domains, as we can cite Brown \cite{B}; Brown and Wu \cite{BW}; Brown and Zhang \cite{BZ2003}; Hsu \cite{Hsu1}; Hsu and Lin \cite{Hsu} and references contained in these articles.

Hsu and Lin \cite{HsuLin} consider the following equation
$$
\left\{ \begin{array}[c]{ll}
- \Delta  u + u = a(z) u^{p-1}+  \lambda h(z)u^{q-1} \, ,x\; \mbox{in} \, \, \R^N, & \\
u \,\, \in \,\, H^1(\R^N),&\\
\end{array}
\right.
$$
for $N\geq3$,  $1\leq q<2<p<2^*$, $\lambda>0$, $a$ is continuous and positive and $ h $ is positive in a positive measure set. The authors study the existence and multiplicity of solutions to this equation.

Besides these we can cite Chen \cite{Chen}, Huang, Wu and Wu \cite{HWW}, who have worked similar cases in $\R^N.$

Wu in \cite{Wu}, deals with the problem
$$
\left\{ \begin{array}[c]{ll}
-\Delta  u + u = f_{\lambda }(x) u^{q-1}+  g_{\mu}(x)u^{p-1} \, ,x\; \mbox{in} \, \, \R^N, & \\
u\geq 0 \, \, \mbox{in}\, \, \R^N,&\\
u \,\, \in \,\, H^1(\R^N),&\\
\end{array}
\right.
$$
with $1<q<2<p\leq2^*$, $g_{\mu}\geq 0$ or $f_{\lambda}$ being able to change of signal, among other additional hypotheses. It seeks to show the existence of at least four solutions to the problem when $\lambda$ and $\mu$ small enough.
This is the result that we try to extend, investigating if it would be possible to obtain similar consequences when replacing the magnetic laplacian in the place of the usual Laplacian.

The first results in non-linear Schr\"{o}dinger equations, with $ A \neq 0 $ can be attributed to Esteban and Lions \cite{EstLions} in which is obteined the existence of stationary solutions for equation of the type
$$-\Delta_A+Vu =|u|^{p-2}u, u\neq 0, u\in L^2(\R^N)  , $$
 using minimization methods for the case $ V = 1 $, $ p \in (2,\infty),$ with constant magnetic field and also for the general case.

In \cite{Kurata}, Kurata showed that equation
$$\left(\frac{h}{i}\nabla -A(x)\right)^2u+V(x)u - f(|u|^2)u=0, \,\, x \, \in \R^N,   $$
with certain assumptions about the magnetic field $A$, as well as, for the potential $V$ and $f$, has at least one solution that concentrates near the set of global minimums of $V$, as $h\rightarrow 0$.


Chabrowski and Szulkin \cite{ChabSzul} worked with this operator in the critical case and with the electric potential V being able to change the signal. Already Cingolani, Jeanjean and Secchi \cite{Cingolani2} considered the existence of mult-peak solutions in the subcritical case.

A problem of the type
$$-\Delta_A u = \mu |u|^{q-2}u+|u|^{2^*-2}u, u \neq  0, x \in \Omega \subset \R^N ,  $$
$\mu>0$ and $2\leq q <2^*$, is treated by Alves and Figueiredo \cite{ClauFig} in which the number of solutions with the topology of $ \Omega $ is related. 

A problem using the Laplacian magnetic was studied by Alves, Figueiredo and Furtado \cite{alves.furtado} 
$$\left( -i\nabla -A \left( \frac{x}{\lambda}\right) \right) ^2 u +u=  f(|u|^{2})u ,\,\, x \in \Omega_{\lambda}=\lambda\Omega,$$
in which the set $\Omega \subset \R^N $ is a bounded domain, $ \lambda> 0 $ is an actual parameter, $A$ is a regular magnetic field and $f$ is a superlinear function with subcritical growth. For the values of $ \lambda $ sufficiently large the authors showed also the existence and multiplicity of solutions relating the number of solutions with the topology of $ \Omega $,

We did not find in the literature works dealing with the non-zero $ A $ case envolving weight function that either changes sign or in the concave-convex case. In this way, it was necessary to construct own arguments to achieve the planned results. In addition, it can be seen that with this operator we are working with complex numbers, so the classic results of regularity, for example, do not apply directly. It was necessary to make a combination of results to be able to use regularity theory.

We will use the method introduced by Nehari in 1960, which has become very useful in critical-point theory and is currently called the Nehari manifold method. The Nehari manifold is closely linked with the behavior of functions known as fibering map. The method of fibering map introduced by Drabek and Pohozaev \cite{DP} and discussed by Brown and Zhang \cite{BZ2003}, relates the functional to a real function. Information about this function leads to a simple demonstration of the result we are looking for.

In sequence we will announce the first result desired. We will work with the hypotheses that we will enunciate next.

Consider the function $a(x) \in L^{q'}(\R^N),\; q'=\frac{p}{p-q}$ and $a_{\pm} =\pm \max\{\pm a(x),0\} \neq 0$. Let us assume
$$a_{\lambda} (x)=\lambda a_+(x)+a_-(x).$$ 

\begin{description}
	\item[($A$)] $\; a(x) \in L^{q'}(\R^N),\; q'=\frac{p}{p-q}$ and exists $ \hat{c}>0$ and $r_{a_-}>0, $ such that
	$$a_-(x)>-\hat{c} \exp(- r_{a_-}|x|) \;\;\mbox{ for all  }\;\; x \in  \,\R^N. $$
\end{description}

Still, we will assume that $b_{\mu}(x)=b_1(x)+\mu b_2 (x)$, where	
\begin{description}	
	\item[($B_1$)] $\;b_1(x)>0$ is continuous in $ \R^N $, with $b_1(x)\rightarrow 1$ as $|x|\rightarrow \infty$ and exists $r_{b_1}>0$, such that
	$$1\geq b_1(x) \geq 1-c_0\exp(-r_{b_1}|x|) \;\; \mbox{for some }\;\; c_0<1 \;\; \mbox{and for all } \;\; x \in \R^N.$$
	
	\item[($B_2$)] $ \; b_2(x)>0$ is continuous in $ \R^N $, $b_2(x)\rightarrow 0$ as $|x|\rightarrow \infty$ and exists $r_{b_2}>0$, with
	$r_{b_2}<\min\{  r_{a_-}, r_{b_1},q\}$ such that
	$$b_2(x)\geq d_0 \exp(-r_{b_2}|x|) \;\; \mbox{for some }\;\; d_0<1 \;\; \mbox{and for all } \;\; x \in
	\R^N.$$
	
\end{description}
Similar hypotheses were used in \cite{Wu}.

Observe that
\begin{equation}\label{funcional.1}
\jf(u)=\frac{1}{2}\intA - \frac{1}{q} \intf - \frac{1}{p} \intg,
\end{equation}
is the functional associated with the problem $ \Plm $ and is of class $ C^1 $ in $ \HA $ as can be seen in \cite{Rab}. Also, the critical points of $\jf(u)$ are weak solutions of problem $(P_1)$. 

Consider
$$\Upsilon_0=( 2-q )^{ 2-q } \left(  \frac{p-2}{ ||a_+||_{q'} }\right)^{p-2}   \left(  \frac{S_p
}{p-q }\right)^{p-q}, \;\;\; \mbox{where} $$
\begin{equation}\label{Sp} 
S_p=\inf_{u\in H_A^1(\R^N \setminus \{0\})}
\frac{\left(\int_{\R^N}|\nabla_Au|^2+u^2dx\right)^{\frac{1}{2}}}{\left(\int_{\R^N}|u|^pdx\right)^{\frac{2}{p}}}>0.
\end{equation}

\begin{theorem}\label{teo1.1(i)}
	Assuming the hypotheses $ (A), \; (B_1) $ and $ (B_2) $, and taking $ \Upsilon_0 $ as defined above, the problem $\Plm $ has at least one solution, provided that for each $\lambda >0$ and $\mu>0$ the inequality
	\begin{equation}\label{des.do.teo}
	\lambda^{p-2}(1+\mu||b_2||_{\infty})^{2-q}< \left(\frac{q}{2}\right)^{p-2} \Upsilon_0,
	\end{equation}
	holds.
	
\end{theorem}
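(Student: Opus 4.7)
The natural plan is to attack this through the Nehari manifold / fibering map decomposition, in the style of Brown--Zhang and the paper of Wu quoted in the introduction, adapted to the magnetic setting. For $u \in H_A^1(\R^N)\setminus\{0\}$ introduce the fibering map
\begin{equation*}
\phi_u(t)=\jf(tu)=\frac{t^2}{2}\|u\|_A^2-\frac{t^q}{q}\intf-\frac{t^p}{p}\intg,
\end{equation*}
where $\|u\|_A^2=\intA$. Set $\M_{\lambda,\mu}=\{u\in\HAo:\phi_u'(1)=0\}$ and split it as $\M^+\cup\M^-\cup\M^0$ according to the sign of $\phi_u''(1)=(2-q)\|u\|_A^2-(p-q)\intg$. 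The theorem will follow by producing a minimizer of $\jf$ on $\M^+$ and checking that it is in fact a free critical point, hence a weak solution of $\Plm$.

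The first step is to show $\M^0=\emptyset$ precisely when \eqref{des.do.teo} holds. On $\M^0$ one has the two identities $\|u\|_A^2=\frac{p-q}{2-q}\intg$ and (using $\phi_u'(1)=0$) $\intf=\frac{p-2}{p-q}\|u\|_A^2$. From $a_\lambda\le \lambda a_+$ and H\"older, $\intf\le\lambda\|a_+\|_{q'}\|u\|_p^q$; from $b_\mu\le(1+\mu\|b_2\|_\infty)b_1\le(1+\mu\|b_2\|_\infty)$ one gets $\intg\le(1+\mu\|b_2\|_\infty)\|u\|_p^p$. Combining these with the magnetic Sobolev inequality $S_p\|u\|_p\le\|u\|_A$ (definition \eqref{Sp}), and comparing the resulting upper and lower bounds for $\|u\|_A$, yields an inequality that is violated exactly under the hypothesis \eqref{des.do.teo}; the constant $\Upsilon_0$ is designed so that this violation is visible. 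The key computation here is fiddly but purely algebraic. As a by-product one obtains uniform separation of $\M^+$ and $\M^-$ in norm, and that $\jf$ is bounded below and coercive on $\M_{\lambda,\mu}$ (the $t^q$ term is absorbed by the quadratic part via Young's inequality).

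Second, analyze $\phi_u$ for fixed $u$ with $\intf>0$: it is negative for small $t>0$, tends to $-\infty$ (because $p>2$), and one checks it has exactly two critical points $0<t^+(u)<t^-(u)$ with $t^+u\in\M^+$ and $t^-u\in\M^-$, so $\M^+\ne\emptyset$. Now take a minimizing sequence $\{u_n\}\subset\M^+$ for $\alpha^+:=\inf_{\M^+}\jf$; Ekeland's variational principle together with the fact that $\M^0=\emptyset$ (so $\M^+$ is a smooth manifold) produces a $(PS)_{\alpha^+}$-sequence for $\jf$ in $\HA$. Coercivity gives a weak limit $\um\in\HA$, and the diamagnetic inequality lets us pass standard Brezis--Lieb / Vitali-type arguments through for the magnetic gradient.

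The main obstacle is compactness: weak convergence $u_n\rightharpoonup\um$ in $\HA$ does \emph{not} give $\int b_\mu|u_n|^p\to\int b_\mu|\um|^p$ in general on $\R^N$. This is exactly what the exponential decay assumptions in $(A)$, $(B_1)$, $(B_2)$ (with $r_{b_2}<\min\{r_{a_-},r_{b_1},q\}$) are designed to control: they prevent vanishing/escape to infinity by showing that if $u_n\rightharpoonup 0$ in $\HA$, then for sufficiently small $\lambda,\mu$ satisfying \eqref{des.do.teo} one has $\alpha^+<0$, contradicting weak lower semicontinuity unless $\um\ne0$. One then shows $u_n\to\um$ strongly, so $\um\in\M^+$ realizes the infimum. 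Finally, a standard Lagrange-multiplier argument (using that $\phi_u''(1)\ne 0$ on $\M^+$) shows that any critical point of $\jf|_{\M_{\lambda,\mu}}$ is an unconstrained critical point in $\HA$, giving the desired weak solution of $\Plm$.
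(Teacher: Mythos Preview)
Your overall framework---Nehari manifold, fibering maps, the decomposition $\M^+\cup\M^-\cup\M^0$, emptiness of $\M^0$, coercivity on $\M$, Ekeland's principle to produce a $(PS)_{m^+}$ sequence, and the Lagrange-multiplier argument to pass from constrained to free critical points---matches the paper's structure exactly. The divergence, and the gap, is in the compactness step.

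The paper does not argue compactness on $\M^+$ the way you describe. It introduces the limiting problem $-\Delta_A u+u=|u|^{p-2}u$ with ground-state level $m_\infty>0$ and proves a Lions-type splitting lemma: any $(PS)_\beta$ sequence for $\jf$ with $\beta<m^+_{\lambda,\mu}+m_\infty$ is relatively compact. Writing $u_n=u_0+v_n$, one obtains $\jf(u_n)=J_\infty(v_n)+\jf(u_0)+o(1)$; if $v_n\not\to0$ in $L^p$ one projects it onto $M_\infty$, picks up at least $m_\infty$ in energy, and forces $\jf(u_0)<m^+$, a contradiction. Since trivially $m^+<m^++m_\infty$, the lemma applies directly. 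Your shortcut can be made rigorous, but not for the reasons you state: the exponential rates $r_{a_-},r_{b_1},r_{b_2}$ play \emph{no role} for the first solution (they enter only in the energy estimate for the second), and $\jf$ is \emph{not} weakly lower semicontinuous, so ``contradicting weak lower semicontinuity'' is wrong as written. What actually works is that $a\in L^{q'}(\R^N)$ makes $u\mapsto\int a_\lambda|u|^q$ weakly continuous on bounded sets of $\HA$ (split at $|x|=R$, use Rellich via the diamagnetic inequality inside), and on $\M$ one has the identity $\jf(u)=\bigl(\tfrac12-\tfrac1p\bigr)\|u\|_A^2-\bigl(\tfrac1q-\tfrac1p\bigr)\int a_\lambda|u|^q$. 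From these, $u_n\rightharpoonup0$ would give $m^+\ge0$, contradicting $m^+<0$; and for the weak limit $u_0\ne0$ one gets $\jf(u_0)\le m^+$ from weak lower semicontinuity of the \emph{norm}, then equality and strong convergence since $u_0\in\M$. Your sentence ``one then shows $u_n\to\um$ strongly'' hides exactly this computation, which must be spelled out.
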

We can see that the  solution of $ \Plm $ will vary according to the $ \mu $ and $ \lambda $ parameters. We will dedicate a section to deal with the behavior of $ \uma $ as $ \lambda \rightarrow 0 $ and we will see what happens also as $ \mu \rightarrow \infty $. Now, adding the hypothesis that the potential is asymptotic to a constant in infinity, we can prove the existence of a second solution.

\begin{theorem}\label{teo1.1(i)}
	Suppose that the potential $A \rightarrow d$ as $|x|\rightarrow \infty$, where $d$ constant. Assuming the hypotheses $(A),\;(B_1)$ and $(B_2)$, and taking $\Upsilon_0$ as defined above, the problem $\Plm$ has at least two solutions $\uma$ and $\ume$ with $\jf(\uma)<0<\jf(\ume),$ provided that inequality (\ref{des.do.teo}) holds.
	
\end{theorem}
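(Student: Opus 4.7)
The plan is to produce the second solution $\ume$ as a minimizer of $\jf$ on the component $\mathcal{N}^-_{\lambda,\mu}$ of the Nehari manifold. Decompose
$$\mathcal{N}_{\lambda,\mu} = \{u\in \HAo : \langle \jf'(u),u\rangle = 0\}$$
into $\mathcal{N}^+, \mathcal{N}^0, \mathcal{N}^-$ according to the sign of $\phi_u''(1)$, where $\phi_u(t) = \jf(tu)$. The condition (\ref{des.do.teo}) should guarantee, via the analysis of the cubic-like shape of $\phi_u$, that $\mathcal{N}^0 = \emptyset$ for admissible $u$, that $\phi_u$ has a strict local minimum on $\mathcal{N}^+$ with negative value (this is what yielded $\uma$ with $\jf(\uma)<0$ in the proof of Theorem 1.1), and that it has a strict local maximum on $\mathcal{N}^-$ with positive value; in particular $c^- := \inf_{\mathcal{N}^-} \jf > 0$ and $\mathcal{N}^-$ is closed and bounded away from $0$ in the $\HA$-norm.

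A minimizing sequence $\{u_n\}\subset \mathcal{N}^-$ is easily shown to be bounded using the usual combination of the Nehari identity and the functional, so up to a subsequence $u_n\rightharpoonup u_0$ in $\HA$. The main obstacle is \emph{compactness}: on $\R^N$ the embedding $\HA\hookrightarrow L^p$ fails, and mass can escape to infinity. This is exactly where the assumption $A(x)\to d$ as $|x|\to\infty$ comes in. Under this condition, together with $b_1\to 1$ and $b_2,a_\pm \to 0$ at infinity, the problem at infinity is the autonomous equation
$$-\Delta_d v + v = |v|^{p-2}v,$$
which, via the gauge transformation $v \mapsto e^{-id\cdot x}v$, is equivalent to the classical problem $-\Delta w + w = |w|^{p-2}w$; let $c_\infty$ be its ground-state level.

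The hard part is to prove the strict energy estimate $c^- < c_\infty$, which restores a local Palais--Smale condition. Here the exponential-decay hypotheses in $(A),(B_1),(B_2)$ with the ordering $r_{b_2} < \min\{r_{a_-}, r_{b_1}, q\}$ are essential: testing $\jf$ (more precisely, the maximum of $\phi_u$) along the family $u_n(x) = e^{iA(x_n)\cdot x}\omega(x-x_n)$ with $\omega$ a ground state of the limit problem and $|x_n|\to\infty$, one compares the gain coming from the $\mu b_2$ concave term, which decays like $e^{-r_{b_2}|x_n|}$, with the losses coming from the $a_-$ and $1-b_1$ terms, which decay faster. Because $r_{b_2}$ is the smallest rate, the $\mu b_2$ gain dominates, pushing the energy strictly below $c_\infty$.

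With $c^- < c_\infty$ established, I would run the standard concentration-compactness/Brezis--Lieb splitting adapted to the magnetic setting: writing $u_n = u_0 + w_n$ with $w_n\rightharpoonup 0$ and iterating the magnetic Brezis--Lieb lemma on $\int b_\mu |u_n|^p$, the possibility $w_n\not\to 0$ would force an energy drop of at least $c_\infty$, contradicting $c^- < c_\infty$. Therefore $u_n\to u_0$ strongly; combined with the positive lower bound on $\|u\|$ for $u\in\mathcal{N}^-$, this forces $u_0\neq 0$ and $u_0\in \mathcal{N}^-$, so $\jf(u_0)=c^->0$. A routine Lagrange-multiplier argument on $\mathcal{N}_{\lambda,\mu}$ (using $\mathcal{N}^0=\emptyset$ to rule out the degenerate case) then shows $u_0$ is a critical point of $\jf$, producing the desired $\ume$ with $\jf(\ume)>0>\jf(\uma)$.
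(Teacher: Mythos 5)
Your overall architecture---Nehari decomposition via the fibering maps, minimization on the two components, identification of the limit problem at infinity through the gauge transformation $v\mapsto e^{-id\cdot x}v$, and the exponential-rate competition $r_{b_2}<\min\{r_{a_-},r_{b_1},q\}$---is the same as the paper's. But there is a genuine gap in the compactness step: your energy threshold is the wrong one. You claim that $c^-<c_\infty$ (i.e.\ $m^-_{\lambda,\mu}<m_\infty$) restores compactness because a nonvanishing remainder ``would force an energy drop of at least $c_\infty$.'' That accounting assumes the weak limit $u_0$ of the minimizing sequence contributes nonnegative energy. It need not: $u_0$ is a critical point of $\jf$, hence either $0$ or an element of $M_{\lambda,\mu}=M^+_{\lambda,\mu}\cup M^-_{\lambda,\mu}$, and nothing forces it to stay in $M^-_{\lambda,\mu}$; if $u_0\in M^+_{\lambda,\mu}$ then $\jf(u_0)$ can be as low as $m^+_{\lambda,\mu}<0$. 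The splitting identity then only yields $c^-\geq m^+_{\lambda,\mu}+m_\infty$, so the contradiction requires the strict inequality
\begin{equation*}
m^-_{\lambda,\mu}<m^+_{\lambda,\mu}+m_\infty,
\end{equation*}
which is strictly stronger than $m^-_{\lambda,\mu}<m_\infty$ since $m^+_{\lambda,\mu}<0$. This is exactly the content of the paper's Proposition \ref{prop4.1}, and its compactness lemma (Lemma \ref{lemadecompacidade}) is stated with the threshold $\beta<m^+_{\lambda,\mu}+m_\infty$ for this reason.

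Consequently your choice of test family is also insufficient: a pure translated bubble $\omega(x-x_n)$ can at best show $m^-_{\lambda,\mu}<m_\infty$. To reach the correct level one must glue the bubble onto the first solution, i.e.\ test along $t\mapsto \uma+t\,w_k$ with $w_k(x)=w_0(x+ke)$, estimate the interaction terms $\Gamma_1,\dots,\Gamma_4$ so that the $\mu b_2$ gain (rate $r_{b_2}$) beats the $a_-$ and $1-b_1$ losses, and then verify that this path actually crosses $M^-_{\lambda,\mu}$ --- the paper does this via the decomposition of $\HA$ into the sets $U_1$, $U_2$ separated by $M^-_{\lambda,\mu}$ and the continuity of $t^-(\cdot)$ from Lemma \ref{2.6(iii)}. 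Your exponential-decay heuristic is the right mechanism, but it must be run on the glued family and calibrated to the threshold $m^+_{\lambda,\mu}+m_\infty$, not $m_\infty$.
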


In the Theorem \ref{teo1.1(i)}, the existence results holds for all values of $\lambda$ and $\mu$ that satisfy the inequality (\ref{des.do.teo}). Now, if we set $\lambda$ and $\mu$ conveniently small we will obtain the result of multiplicity, obtaining the existence of at least three solutions as stated in the following theorem.

\begin{theorem}\label{teo1.1(ii)} Suppose that the potential $A \rightarrow d$ as $|x|\rightarrow \infty$, where $d$ constant. Also, suppose the assumptions $(A),\;(B_1)$ and $(B_2)$ are satisfied, and let $\Upsilon_0$ as defined above, problem $\Plm$ has at least three solutions, provided that there exist $\lambda_0 >0$ and $\mu_0>0$ such that (\ref{des.do.teo}) holds for all  $\lambda \in (0,\lambda_0]$ and $\mu \in (0,\mu_0]$. \end{theorem}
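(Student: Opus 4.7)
The plan is to build on the preceding theorem, which already gives $\uma \in \mathcal{N}^+$ with $\jf(\uma)<0$ and $\ume \in \mathcal{N}^-$ with $\jf(\ume)=\alpha^-_{\lambda,\mu}>0$, and to produce the third solution as a second critical point on $\mathcal{N}^-$ at a strictly higher minimax level. Because $A(x)\to d$ as $|x|\to\infty$, the gauge change $v\mapsto e^{-id\cdot x}v$ converts the \emph{problem at infinity} into the autonomous equation $-\Delta w+w=|w|^{p-2}w$ on $\R^N$, whose mountain-pass level $c_\infty$ is attained by a positive, exponentially decaying ground state $w_\infty$.

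First I would establish a local Palais--Smale result: every PS sequence $\{u_n\}\subset\HA$ for $\jf$ at a level $c<\alpha^-_{\lambda,\mu}+c_\infty$ is relatively compact. Adapting Lions's profile decomposition to the magnetic setting (via the diamagnetic inequality together with $b_1\to 1$, $b_2\to 0$ and $a\in L^{q'}$), any PS sequence that fails to converge strongly must, after a translation to infinity, split off a bubble weakly solving the autonomous limit equation. Such a bubble carries energy $\geq c_\infty$, while the residual weak limit is itself a critical point of $\jf$ of non-negative energy, contradicting $c<\alpha^-_{\lambda,\mu}+c_\infty$.

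Next I would construct a minimax path anchored at $\ume$. For large $R>0$, consider a phase-corrected translated bubble $W_R(x)=e^{i\psi_R(x)}\,w_\infty(x-Re_1)$, where $\psi_R$ is chosen so that $W_R\in\HA$ and the magnetic norm of $W_R$ converges to the standard $H^1$-norm of $w_\infty$. The fibering-map analysis from the earlier sections shows that $t\mapsto\jf(\ume+tW_R)$ attains a unique positive maximum and then decreases to $-\infty$, so this ray provides a valid mountain-pass/linking path. A standard min–max argument on $\mathcal{N}^-$ then yields a candidate critical level
\[
c^*=\inf_{\gamma\in\Gamma}\max_{t\in[0,1]}\jf(\gamma(t))>\alpha^-_{\lambda,\mu},
\]
where $\Gamma$ denotes paths leaving $\ume$ in the direction $W_R$ and ending at energies below $\alpha^-_{\lambda,\mu}$.

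The heart of the proof, and the step I expect to be hardest, is the strict energy estimate
\[
\sup_{t\ge 0}\jf(\ume+tW_R)<\alpha^-_{\lambda,\mu}+c_\infty
\]
for $R$ large and $\lambda\in(0,\lambda_0]$, $\mu\in(0,\mu_0]$ suitably small. Expanding $\jf(\ume+tW_R)$, the excess over $\alpha^-_{\lambda,\mu}+c_\infty$ is controlled by several cross terms involving $\ume$, $W_R$ and the weights. The hypotheses $(A)$, $(B_1)$, $(B_2)$ together with the key inequality $r_{b_2}<\min\{r_{a_-},r_{b_1},q\}$ are designed precisely so that the negative contribution $-\mu\int b_2|W_R|^p\,dx\sim -\mu\,d_0\exp(-r_{b_2}R)$ strictly dominates the positive contributions $\int (1-b_1)|W_R|^p\,dx\lesssim \exp(-r_{b_1}R)$, the $a_-$ remainder $\lesssim \exp(-r_{a_-}R)$, the magnetic phase error, and the $\ume$–$W_R$ interaction (itself small for $\lambda_0,\mu_0$ small because $\|\ume\|$ is). Once this strict inequality is in hand, the compactness step applies at $c^*$, producing a critical point $u^{**}$ with $\jf(u^{**})=c^*>\alpha^-_{\lambda,\mu}=\jf(\ume)>0>\jf(\uma)$, hence distinct from both $\uma$ and $\ume$ and giving the desired third solution.
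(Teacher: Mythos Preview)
Your compactness claim is wrong, and this breaks the whole scheme. You assert that a Palais--Smale sequence at level $c<\alpha^-_{\lambda,\mu}+c_\infty$ is relatively compact because ``the residual weak limit is itself a critical point of $\jf$ of non-negative energy.'' That is false: the residual $u_0$ can be any critical point of $\jf$, in particular $u_0=\uma$, which has $\jf(\uma)=m^+_{\lambda,\mu}<0$. The splitting gives $c\ge \jf(u_0)+c_\infty\ge m^+_{\lambda,\mu}+c_\infty$, so the correct compactness threshold is $m^+_{\lambda,\mu}+c_\infty$, not $\alpha^-_{\lambda,\mu}+c_\infty$. Since $m^+_{\lambda,\mu}<0<\alpha^-_{\lambda,\mu}$, your threshold is strictly too high, and at any level in the gap $[m^+_{\lambda,\mu}+c_\infty,\alpha^-_{\lambda,\mu}+c_\infty)$ the PS condition can genuinely fail.

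This is why anchoring the test path at $\ume$ is the wrong choice. Your estimate $\sup_{t\ge 0}\jf(\ume+tW_R)<\alpha^-_{\lambda,\mu}+c_\infty$ is useless, and the estimate you actually need, $\sup_{t\ge 0}\jf(\ume+tW_R)<m^+_{\lambda,\mu}+c_\infty$, is hopeless: already at $t=0$ the left side equals $\alpha^-_{\lambda,\mu}>0>m^+_{\lambda,\mu}$, and for large $R$ the path will nearly reach $\alpha^-_{\lambda,\mu}+c_\infty$ before the small correction terms kick in. The paper avoids this by anchoring the translated bubble at $\uma$, proving $\sup_{t\ge 0}\jf(\uma+tw_k)<m^+_{\lambda,\mu}+c_\infty$; then the projection onto $M^-_{\lambda,\mu}$ of these paths lies in the sublevel set $\mathcal{Q}_{\lambda,\mu}=\{u\in M^-_{\lambda,\mu}:\jf(u)<m^+_{\lambda,\mu}+c_\infty\}$. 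Multiplicity on $M^-_{\lambda,\mu}$ is then obtained not by a single mountain-pass but by Lusternik--Schnirelmann category: a map $\Phi:S^{N-1}\to\mathcal{Q}_{\lambda,\mu}$ built from bubbles translated in all directions $e\in S^{N-1}$, and a barycenter-type map $\Psi:\mathcal{Q}_{\lambda,\mu}\to S^{N-1}$ (this is where the smallness of $\lambda_0,\mu_0$ is really used), with $\Psi\circ\Phi$ homotopic to the identity, force $\operatorname{cat}(\mathcal{Q}_{\lambda,\mu})\ge 2$ and hence two critical points on $M^-_{\lambda,\mu}$ below the compactness threshold.
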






We will continue to make use of Variational Methods to prove the above theorems. In addition, we will try to show the regularity results that we express below.
\begin{theorem}
	Supose that $u_0 \in \HA$ is a non-zero solution of $\Plm$ with $\lambda > 0$ and $\mu > 0$. Then
\begin{description}
		\item[  (i)]  $u_0 \in C(\R^N,\C) \cap L^{\gamma}(\R^N)$ for all $2 \leq \gamma  < +\infty$ ;
		\item[  (ii)] $|u_0|$ is positive in $\R^N$.		
	\end{description}

\end{theorem}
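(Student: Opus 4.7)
The plan is to reduce the complex-valued problem to a real scalar one for $w:=|u_0|$ via the diamagnetic inequality $|\nabla|u_0||\leq|(\nabla+iA)u_0|$ a.e., and then carry out a standard elliptic bootstrap. A Kato-type manipulation (testing $\Plm$ against $\overline{u_0}\,\varphi/\sqrt{|u_0|^2+\varepsilon^2}$ with $\varphi\in C_c^\infty(\R^N)$, $\varphi\geq 0$, taking real parts, and letting $\varepsilon\to 0$) shows that $w\in H^1(\R^N)$ is a nonnegative weak subsolution of
\begin{equation*}
-\Delta w + w \;\leq\; a_\lambda(x)\,w^{q-1} + b_\mu(x)\,w^{p-1} \quad \text{in } \R^N.
\end{equation*}

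For (i), I would iterate on this scalar inequality in the spirit of Moser / Brezis--Kato. Testing with $\min\{w,M\}^{2(\beta-1)}w$ for increasing exponents $\beta$, and using $a\in L^{q'}(\R^N)$ together with $b_\mu\in L^\infty(\R^N)$ and the Sobolev embedding $H^1\hookrightarrow L^{2^*}$, one absorbs the nonlinear terms into an effective potential of class $L^{N/2}_{\rm loc}+L^\infty$ and upgrades step by step from $w\in L^{2^*}$ to $w\in L^\gamma(\R^N)$ for every $\gamma\in[2,\infty)$. Continuity of $u_0$ as a $\C$-valued function then follows from local elliptic regularity applied to the expanded equation
\begin{equation*}
-\Delta u_0 + u_0 \;=\; 2iA\cdot\nabla u_0 + i(\mathrm{div}\,A)u_0 - |A|^2 u_0 + a_\lambda|u_0|^{q-2}u_0 + b_\mu|u_0|^{p-2}u_0,
\end{equation*}
whose right-hand side lies in $L^s_{\rm loc}(\R^N)$ for every finite $s$ once the $L^\gamma$-bounds on $u_0$ and $\nabla u_0$ are available; Calder\'on--Zygmund estimates give $u_0\in W^{2,s}_{\rm loc}$, and Sobolev embedding yields $u_0\in C(\R^N,\C)$.

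For (ii), the bounds from (i) ensure that the coefficient $V(x):=1-a_\lambda(x)|u_0|^{q-2}-b_\mu(x)|u_0|^{p-2}$ is locally bounded on the open set $\Omega_0:=\{w>0\}$, where $w$ satisfies $-\Delta w + V(x) w \leq 0$ in the classical sense. The strong maximum principle then rules out interior zeros of $w$ inside any connected component of $\Omega_0$; since $\R^N$ is connected, $u_0\not\equiv 0$ and $w$ is continuous, a barrier/continuity argument at boundary points of $\Omega_0$ shows $\Omega_0=\R^N$ and hence $|u_0|>0$ everywhere.

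The main technical obstacle is the rough behaviour near the potential zero set of $u_0$: the map $u\mapsto|u|$ is not differentiable where $u=0$, and the coefficient $w^{q-2}$ is singular there. Both difficulties are overcome by the standard regularization $|u|\leadsto\sqrt{|u|^2+\varepsilon^2}$, deriving all estimates with $\varepsilon$-uniform constants and letting $\varepsilon\to 0$. A secondary point is the modest regularity of the magnetic potential (only $A\in L^2_{\rm loc}$): one must verify that, once the Brezis--Kato bootstrap has forced $u_0\in L^\infty_{\rm loc}$ and $\nabla u_0\in L^s_{\rm loc}$ for all finite $s$, the magnetic correction terms $A\cdot\nabla u_0$ and $|A|^2 u_0$ do sit in the $L^s_{\rm loc}$ space required by Calder\'on--Zygmund. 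This is precisely the "combination of results" the introduction alludes to, and is where the regularity argument departs from the classical non-magnetic template.
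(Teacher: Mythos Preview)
For part (i) your approach coincides with the paper's. The paper also packages the nonlinearity as $h(x,u)=v(x)(1+|u|)$ with $v\in L^{N/2}$, then runs a Brezis--Kato iteration by testing the complex equation with $\phi=u\min\{|u|^{\beta-1},L\}$, taking real parts, and invoking the diamagnetic inequality to get the $L^\gamma$ bounds; for continuity it likewise splits $u_0=v+iw$ and bootstraps each real scalar equation to $W^{2,s}_{\rm loc}\hookrightarrow C^{1,\alpha}_{\rm loc}$. Your variant---first passing to the scalar subsolution for $|u_0|$ and iterating there---is an equivalent execution of the same scheme.

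For part (ii) there is a genuine gap. Kato's inequality yields $-\Delta w + Vw \leq 0$, so $w=|u_0|$ is a \emph{subsolution}; the strong maximum principle in that direction controls interior maxima, not minima, and does not prevent a nonnegative subsolution from touching zero. Positivity statements of the form ``$w\geq 0$, $w\not\equiv 0\Rightarrow w>0$'' require the \emph{supersolution} inequality $-\Delta w + c\,w \geq 0$, which you do not have: writing $u_0=we^{i\theta}$ on $\{w>0\}$ gives $-\Delta w + w + w|\nabla\theta-A|^2 = a_\lambda w^{q-1}+b_\mu w^{p-1}$, so the magnetic phase term enters with the sign that makes $w$ only a subsolution of the scalar problem. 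Your sentence about ruling out zeros inside $\Omega_0=\{w>0\}$ is vacuous by definition of $\Omega_0$, and the unspecified ``barrier/continuity argument'' at $\partial\Omega_0$ is exactly where all the content lies. A correct route would need either a Harnack inequality for the full equation on $\{w>0\}$ together with control of $|\nabla\theta-A|^2$ near putative zeros, or a unique continuation principle for the magnetic operator. (The paper itself states (ii) in the introduction, but its regularity section in fact proves only (i).)
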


\begin{theorem}
	If $u_0\in \HA$ is a solution of $\Plm$, then $u_0 \in L^{\infty}(\R^N)$ and $\lim_{|x|\rightarrow
		\infty}u_0(x)=0.$
\end{theorem}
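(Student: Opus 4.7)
The plan is to establish the $L^\infty$ bound first by reducing to a real, nonnegative subsolution via the diamagnetic (Kato) inequality, and then to extract the decay at infinity from translation-invariant local elliptic estimates combined with the global integrability of $u_0$ and of the coefficients. I would start by applying Kato's inequality for the magnetic Laplacian, which, after absorbing the linear term $u$, yields the distributional inequality
\begin{equation*}
-\Delta |u_0| + |u_0| \leq |a_\lambda(x)|\,|u_0|^{q-1} + b_\mu(x)|u_0|^{p-1} \qquad \text{in } \R^N.
\end{equation*}
By the previous theorem, $|u_0|$ is continuous and $|u_0| \in L^\gamma(\R^N)$ for every $2\leq \gamma<\infty$; by $(B_1)$ and $(B_2)$, $b_\mu\in L^\infty(\R^N)$; and by $(A)$, $a_\lambda\in L^{q'}(\R^N)$ with $q'=p/(p-q)$. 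Since $1<q<2<p<2^*$ and $|u_0|$ has arbitrarily high $L^\gamma$-integrability, the right-hand side lies in $L^s(\R^N)$ for some $s>N/2$, and a Brezis--Kato/Moser iteration applied to the scalar subsolution $|u_0|$ produces a uniform bound $\|u_0\|_{L^\infty(\R^N)}\leq C$.

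For the decay part, I would rely on the translation-invariant local Moser estimate for $-\Delta+\mathrm{Id}$: for every $x\in \R^N$,
\begin{equation*}
\bigl\| |u_0| \bigr\|_{L^\infty(B_{1/2}(x))} \leq C\Bigl( \bigl\| |u_0| \bigr\|_{L^2(B_1(x))} + \|g\|_{L^s(B_1(x))} \Bigr),
\end{equation*}
where $g:=|a_\lambda|\,|u_0|^{q-1}+b_\mu|u_0|^{p-1}$ and $C$ is independent of $x$. As $|x|\to\infty$, each term on the right tends to zero: the $L^2$ piece because $u_0\in L^2(\R^N)$; the $b_\mu|u_0|^{p-1}$ piece because $|u_0|\in L^{s(p-1)}(\R^N)$ and $b_\mu$ is bounded; and the $|a_\lambda|\,|u_0|^{q-1}$ piece by H\"older's inequality together with $a_\lambda\in L^{q'}(\R^N)$ and the uniform bound on $|u_0|$ obtained in the first step. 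Combining these facts gives $|u_0(x)|\to 0$ as $|x|\to \infty$.

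The main obstacle is justifying the Kato-type subsolution inequality rigorously for the complex-valued magnetic equation when the sub-quadratic nonlinearity $|u|^{q-2}u$ (with $1<q<2$) is paired with the sign-changing $L^{q'}$ weight $a_\lambda$: one regularizes $|u_0|$ as $\sqrt{|u_0|^2+\varepsilon^2}$, computes a distributional derivative in this regularization (which cancels the magnetic vector potential upon taking real parts), and then passes to the limit $\varepsilon\to 0$, checking that the right-hand side stays integrable in that limit. Once this Kato reduction and the uniform $L^\infty$ bound are in place, the decay step is essentially qualitative and uses only the integrability built into the hypotheses $(A)$, $(B_1)$ and $(B_2)$.
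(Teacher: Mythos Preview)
Your strategy---reduce to a real scalar subsolution for $|u_0|$ via the diamagnetic/Kato inequality, then run a Moser iteration and read off decay from translation-invariant local estimates---is exactly what the paper does, only the paper carries it out by hand: it tests the complex equation with $\phi=\eta^2 u\min\{|u|^{\beta-1},L\}$, takes real parts (which is precisely the Kato reduction), and iterates explicitly on shrinking balls, first in the exterior region $\{|x|>R\}$ to get $\sup_{B(x_0,1)}|u|\le C\bigl(\int_{B(x_0,2)}|u|^{\gamma}\bigr)^{1/\gamma}$ and hence decay, then on a compact cover of $B(0,R)$ to finish $L^\infty$. So the two arguments coincide in substance; yours is the black-box formulation of theirs.

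One point to tighten: the assertion that the right-hand side $g=|a_\lambda|\,|u_0|^{q-1}+b_\mu|u_0|^{p-1}$ lies in $L^s(\R^N)$ for some $s>N/2$ need not hold for the $a_\lambda$ term, since $a_\lambda$ is only assumed in $L^{q'}$ with $q'=p/(p-q)$, and $q'>N/2$ is not automatic under $1<q<2<p<2^*$. The paper (Lemma~\ref{B3}) sidesteps this by writing $h(x,u)=v(x)(1+|u|)$ with $v\in L^{N/2}$ and then running the iteration with the $L^{N/2}$ potential absorbed on small balls (choosing the radius so that $\|v\|_{L^{N/2}(B)}$ is small); your invocation of ``Brezis--Kato/Moser'' should be read in this sense rather than as a pure source-term estimate.
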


We will start by defining the functional $J_{\lambda,\mu}$ associated with problem $ (P_1) $ as well as, the Nehari manifold and we will see how they relate. We will work out the relation between the Nehari manifold and the behavior of the functions in the form $ F_u: t \rightarrow \J (tu); \; \ (t> 0) $. We will also make a study of the category theory to investigate the existence of a third solution. We will use the Theory of Regularity \cite{GT}, in order to prove that the weak solutions are, in fact, classic solutions of the problem in question. We will examine the behavior of the solution in cases where $ \lambda \rightarrow 0 $ and as $ \mu \rightarrow \infty $. We will do a study to estimate the energy levels  in different parts of the Nehari manifold, which will enable us to find two different solutions for the problem. 

\section{Initial considerations}

According to Tang in \cite{TZ}, we denote by $ H_A (\R^N)$ the Hilbert space obtained by the closing of $ C_0^{\infty} (\R^N, \C) $ with following inner product
$$<u,v>_A=Re\left(\int_{\R}\nabla_Au \overline{\nabla_Av}+u\overline{v} dx\right),$$
where $\nabla_Au:=(D_1u,D_2u,...,D_Nu)$ and $D_j:=-i\partial_j-A_j(x) $, with $j=1,2,...,N$, with $A(x)=(A_1(x),...,A_N(x))$. The norm induced by this product is given by
$$ ||u||_A^2:=\left(\int_{\R}|\nabla_Au|^2+u^2dx\right) .$$
\\
Note that,
$$(-i\nabla-A)^2\psi=-\Delta\psi+2iA\nabla \psi +|A|^2\psi+i\psi \; div A  \;\; \mbox{in} \;\; \R^N$$
$$= -\Delta \psi +2iA\nabla\psi+|A|^2\psi+i\psi \; div A$$

Is proved by Esteban and Lions, \cite[Section II]{EstLions} that for all $u \in H^1_A (\R^N)$ it is worth diamagnetic inequality
$$|\nabla|u|(x)|=\left|Re\left(\nabla u \frac{\overline{u}}{|u|}\right)\right|=\left|Re\left((\nabla u-iAu)\frac{\overline{u}}{|u|}\right)\right|\leq |\nabla _Au(x)|$$


\subsection{Nehari Manifold}
We will now define the Nehari manifold and the fibering map and verify its properties from the $ J_{\lambda,\mu}$ function associated with $ \Plm $. Later, we will use this information to prove in a very simple way the existence of a solution of $ \Plm $, for convenient values of $\lambda $ and $\mu$. To obtain results of existence in this case, we introduced the Nehari manifold
$$M_{\lambda, \mu}=\{u \in \HA\setminus\{0\} :\langle J'_{\lambda,\mu}(u),u\rangle =0\}$$
where $\langle \;\;,\;\;\rangle$ denotes the usual duality between $\HA^*$ and $\HA$, where $\HA^*$ is the dual space to the corresponding $ \HA $ space.

\subsection{Fibering Map}
		
We will now present the functions of the form $F_u:t\rightarrow \jf(tu);\;\;(t>0)$, we will analyze its behavior and show its relation to the Nehari manifold.

Note that the fabering map it was defined depends on $ u $, $ \lambda $ and $ \mu $, so that proper notation would be $ F_{u, \lambda, \mu} $, but in order to simplify the notation, we will only work with $ F_u $.
		
If $u \in  \HA$, we have
\begin{equation}\label{phi}
F _u(t)=\frac{t^2}{2} ||u||_A^2  -\frac{t^{q}}{q}\intf - \frac{t^{p}}{p}\intg,
\end{equation}
\begin{equation}\label{phi'}
F' _u(t)= t ||u||_A^2 -  t^{q-1} \intf -  t^{p-1} \intg,
\end{equation}
\begin{equation}\label{phi''}
F'' _u(t)= ||u||_A^2 - (q-1)t^{q-2}\intf -(p-1)t^{p-2}\intg.
\end{equation}
		
The following remark relates the Nehari manifold and the Fibering map.
		
		\begin{remark}\label{u_in_s}
			Let $ F_u $ be the application defined above and $ u \in \HA $, then:			
			\begin{description}
				\item[  (i)]  $ u \in M_{\lambda, \mu}$ if, and only if, $F_u'(1)=0$; 
				\item[  (ii)] more generally $ tu \in M_{ \lambda, \mu} $, and only if, $F'_u (t)=0$.
			\end{description}
		
		\end{remark}	
	From the previous remark we can conclude that the elements in $ M_{\lambda, \mu}$, correspond to the critical points of the Fibering map. Thus, as $ F_u (t) \in C^2(\R^+, \R) $, we can divide the Nehari manifold into three parts
		$$ M_{\lambda, \mu}^+=\{ u \in M_{\lambda, \mu}; F''_{\lambda, \mu}(1)>0  \}; $$
		$$ M_{\lambda, \mu}^-=\{ u \in M_{\lambda, \mu}; F''_{\lambda, \mu}(1)<0  \}; $$
		$$ M_{\lambda, \mu}^0=\{ u \in M_{\lambda, \mu}; F''_{\lambda, \mu}(1)=0  \}. $$
		
		Next, we will prove some properties of the Nehari manifold $ M_{\lambda,\mu}$. For this, we will need some preliminary results.
		
		\begin{lemma}\label{lema2.4}
			\begin{description}
				\item[  ($i$)] For $\displaystyle{ u \in  M_{\lambda, \mu}^+ \cup M_{\lambda, \mu}^0,}$ we have $\displaystyle  \int_{\R^N} a_{\lambda}|u|^q dx>0$.	
				\item[  ($ii$)] For $ u \in M_{\lambda, \mu}^- $, we have $ \displaystyle   \int_{\R^N} b_{\mu}|u|^p dx >0$.  
			\end{description}
		
		\end{lemma}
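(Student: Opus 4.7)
The plan is to leverage the two algebraic identities that follow from membership in the Nehari manifold together with the sign conditions defining the decomposition $M^{\pm}_{\lambda,\mu}$ and $M^{0}_{\lambda,\mu}$. Since $u\in M_{\lambda,\mu}$ means $F'_u(1)=0$, equation \eqref{phi'} gives
\begin{equation*}
\|u\|_A^2 \;=\; \int_{\R^N} a_{\lambda}(x)|u|^q\,dx \;+\; \int_{\R^N} b_{\mu}(x)|u|^p\,dx,
\end{equation*}
which lets me substitute into \eqref{phi''} and eliminate, alternately, one of the two integrals. This produces the two key reformulations
\begin{equation*}
F''_u(1) \;=\; (2-q)\,\|u\|_A^2 \;-\; (p-q)\int_{\R^N} b_{\mu}|u|^p\,dx,
\end{equation*}
\begin{equation*}
F''_u(1) \;=\; -(p-2)\,\|u\|_A^2 \;+\; (p-q)\int_{\R^N} a_{\lambda}|u|^q\,dx.
\end{equation*}

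For part (i), suppose $u\in M^+_{\lambda,\mu}\cup M^0_{\lambda,\mu}$, so that $F''_u(1)\geq 0$. Using the second identity above,
\begin{equation*}
(p-q)\int_{\R^N} a_{\lambda}|u|^q\,dx \;\geq\; (p-2)\,\|u\|_A^2 \;>\; 0,
\end{equation*}
where the strict inequality on the right uses $p>2$ together with $u\neq 0$ (so $\|u\|_A>0$, since elements of the Nehari manifold are nonzero by definition). Dividing by $p-q>0$ yields the conclusion.

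For part (ii), suppose $u\in M^-_{\lambda,\mu}$, so that $F''_u(1)<0$. Using the first identity,
\begin{equation*}
(p-q)\int_{\R^N} b_{\mu}|u|^p\,dx \;>\; (2-q)\,\|u\|_A^2 \;>\; 0,
\end{equation*}
where now the strict positivity on the right uses $q<2$ and again $\|u\|_A>0$. Dividing by $p-q>0$ completes the proof. There is no real obstacle here: the only thing one has to notice is the right linear combination of the Nehari identity and $F''_u(1)$ that cancels exactly the integral one does \emph{not} want to control; the rest is the sign arithmetic provided by the assumption $1<q<2<p$.
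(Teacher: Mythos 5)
Your proof is correct and follows essentially the same route as the paper: both use the Nehari identity $F'_u(1)=0$ to rewrite $F''_u(1)$ as $(2-p)\|u\|_A^2+(p-q)\int_{\R^N}a_{\lambda}|u|^q\,dx$ for part (i) and as $(2-q)\|u\|_A^2-(p-q)\int_{\R^N}b_{\mu}|u|^p\,dx$ for part (ii), and then read off the signs from $1<q<2<p$ and $\|u\|_A>0$. Your write-up is in fact slightly more careful than the paper's, since you explicitly note that $u\neq 0$ is needed for the strict inequality.
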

		
		\begin{proof}\begin{description}
				\item[$(i)$] Note that for $u \in  M_{\lambda, \mu}$
				\begin{eqnarray*}
					\nonumber   F''_{\lambda, \mu}(1)   &=&  ||u||^2_A - (q-1) \int_{\R^N} a_{\lambda}|u|^q dx - (p-1) \int_{\R^N} b_{\mu}|u|^p dx \\
					\label{2.2.1}    &=&   (2-p) ||u||^2_A - (q-p) \int_{\R^N} a_{\lambda}|u|^q dx .
				\end{eqnarray*}
				In which case $u \in  M_{\lambda, \mu}^+ \cup M_{\lambda, \mu}^0,$ and it follows whence $ \int_{\R^N} a_{\lambda}|u|^q dx>0.$  
				\item[$(ii)$]For $ u \in M_{\lambda, \mu} $ we have
				\begin{eqnarray*}\label{2.3.1}
					F''_{\lambda, \mu}(1)   &=&   (2-q) ||u||^2_A - (p-q) \int_{\R^N} b_{\mu}|u|^p dx .
				\end{eqnarray*}
				In which case  $ u \in M_{\lambda, \mu}^- $ and it follows whence $ \int_{\R^N} b_{\mu}|u|^p dx>0.$   
			\end{description}
		\end{proof}

We will now present a lemma that shows that a critical point of the functional restricted to the manifold is also a critical point of the functional in the whole space.
		 
		\begin{lemma}\label{minglobal}
			Supose that $\lambda > 0$ and $\mu > 0$. If $u_0$ is a local minimum for $\jf$ in $M_{\lambda, \mu}  $ with $u_0 \notin M^0_{\lambda, \mu}$, so $\jf'(u_0)=0$ in $H^{-1}_A$.
		\end{lemma}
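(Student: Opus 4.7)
The plan is to invoke the Lagrange multiplier rule on the constraint functional that cuts out $M_{\lambda,\mu}$, then test the resulting identity against $u_0$ itself and exploit the fibering-map identification of $M^0_{\lambda,\mu}$ to force the multiplier to vanish.

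Concretely, first I would introduce $\Psi : \HA \to \R$ by $\Psi(u) = \langle \jf'(u),u\rangle = \|u\|_A^2 - \intf - \intg$, so that $M_{\lambda,\mu} = \Psi^{-1}(0)\setminus\{0\}$. Using standard differentiability of the power nonlinearities on $\HA$ (with the Re-valued pairing used throughout the paper), $\Psi$ is $C^1$ and a direct computation gives
$$\langle \Psi'(u),v\rangle = 2\,\mathrm{Re}\langle u,v\rangle_A - q\,\mathrm{Re}\!\int_{\R^N} a_\lambda(x)|u|^{q-2}u\,\overline{v}\,dx - p\,\mathrm{Re}\!\int_{\R^N} b_\mu(x)|u|^{p-2}u\,\overline{v}\,dx.$$

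The key observation is the link with the fibering map. Since $\Psi(tu) = t\,F'_u(t)$, differentiating in $t$ and evaluating at $t=1$ gives $\langle \Psi'(u),u\rangle = F'_u(1) + F''_u(1)$. For $u_0 \in M_{\lambda,\mu}$ one has $F'_{u_0}(1) = 0$ by Remark \ref{u_in_s}, so
$$\langle \Psi'(u_0),u_0\rangle = F''_{u_0}(1).$$
The hypothesis $u_0 \notin M^0_{\lambda,\mu}$ therefore yields $\langle \Psi'(u_0),u_0\rangle \neq 0$; in particular $\Psi'(u_0) \not\equiv 0$ in $H^{-1}_A$, so $M_{\lambda,\mu}$ is a $C^1$ submanifold of $\HA$ in a neighborhood of $u_0$.

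Since $u_0$ is a local minimizer of $\jf$ on this $C^1$ constraint, the Lagrange multiplier theorem produces $\theta \in \R$ with $\jf'(u_0) = \theta\,\Psi'(u_0)$ in $H^{-1}_A$. Pairing both sides with $u_0$ and using $u_0 \in M_{\lambda,\mu}$ gives $0 = \langle \jf'(u_0),u_0\rangle = \theta\,\langle \Psi'(u_0),u_0\rangle = \theta\,F''_{u_0}(1)$, and because $F''_{u_0}(1)\neq 0$ we conclude $\theta = 0$, hence $\jf'(u_0)=0$ in $H^{-1}_A$.

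The only step that demands some care — and thus the main obstacle — is the differentiability and the multiplier rule for the complex-valued setting: one must verify that the real-valued functionals $\jf$ and $\Psi$ are genuinely $C^1$ on $\HA$ (which follows from $2<p<2^*$ together with the diamagnetic inequality recalled earlier), and that a Banach-space Lagrange multiplier theorem applies once $\Psi'(u_0)\neq 0$ has been established. Everything else is bookkeeping.
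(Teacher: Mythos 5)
Your proof is correct and is essentially the argument the paper relies on: the paper simply cites \cite[Theorem 2.3]{BZ2003}, whose proof is exactly this Lagrange multiplier computation with the constraint functional $\Psi(u)=\langle \jf'(u),u\rangle$, the identity $\langle \Psi'(u_0),u_0\rangle=F''_{u_0}(1)$ on $M_{\lambda,\mu}$, and the vanishing of the multiplier forced by $u_0\notin M^0_{\lambda,\mu}$.
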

		
		\begin{proof}
The proof is similar to what was done in \cite[Teorema 2.3]{BZ2003}.		
		\end{proof}
		
	The lemma below shows us under conditions on $M^0_{\lambda, \mu}$ is empty. This fact is essential for the development of this work, because under such conditions we will be in the hypotheses of the previous lemma. 
		
		\begin{lemma}\label{N0}
			Let $\mu\geq 0$ and $\lambda>0$ such that
			\begin{equation}\label{lambda}
	\lambda^{p-2}(1+\mu||b_2||_{\infty})^{2-q} < \Upsilon_0.
			\end{equation}
			Then $M^0_{\lambda, \mu}=\emptyset$.
		\end{lemma}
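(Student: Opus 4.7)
The plan is to argue by contradiction: assume some $u\in M^0_{\lambda,\mu}$ exists and extract enough quantitative information from the two identities $F_u'(1)=0$ and $F_u''(1)=0$ to violate the hypothesis. Solving this $2\times 2$ linear system in the unknowns $\int a_\lambda |u|^q\,dx$ and $\int b_\mu |u|^p\,dx$, eliminating each in turn using $\|u\|_A^2=\int a_\lambda|u|^q + \int b_\mu|u|^p$ together with $F_u''(1)=0$, one finds
\begin{equation*}
\int_{\R^N} a_\lambda(x)|u|^q\,dx \;=\; \frac{p-2}{p-q}\,\|u\|_A^2,\qquad
\int_{\R^N} b_\mu(x)|u|^p\,dx \;=\; \frac{2-q}{p-q}\,\|u\|_A^2.
\end{equation*}
Both sides are strictly positive because $u\neq 0$ (this is also consistent with Lemma \ref{lema2.4}).

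Next I would bound each integral from above using the structural hypotheses. Since $a_-\le 0$, we have $a_\lambda(x)\le\lambda a_+(x)$, and Hölder with exponents $q'=p/(p-q)$ and $p/q$ combined with the embedding inequality $\|u\|_p\le S_p^{-1}\|u\|_A$ coming from the definition of $S_p$ gives
\begin{equation*}
\int_{\R^N} a_\lambda |u|^q\,dx \;\le\; \lambda\,\|a_+\|_{q'}\,S_p^{-q}\,\|u\|_A^q.
\end{equation*}
Similarly, $b_1\le 1$ and $b_2\le\|b_2\|_\infty$ together with the Sobolev inequality yield
\begin{equation*}
\int_{\R^N} b_\mu |u|^p\,dx \;\le\; (1+\mu\|b_2\|_\infty)\,S_p^{-p}\,\|u\|_A^p.
\end{equation*}
Inserting these two estimates into the identities above and dividing through by the appropriate power of $\|u\|_A$ yields
\begin{equation*}
\|u\|_A^{2-q}\le \lambda\,\frac{(p-q)\|a_+\|_{q'}}{(p-2)S_p^q},\qquad
\|u\|_A^{p-2}\ge \frac{(2-q)\,S_p^p}{(p-q)(1+\mu\|b_2\|_\infty)}.
\end{equation*}

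Finally, to produce a single scalar inequality with the same homogeneity as the hypothesis, raise the first estimate to the power $p-2$ and the second to the power $2-q$, then compare them (both represent $\|u\|_A^{(p-2)(2-q)}$). After simplification using $p(2-q)+q(p-2)=2(p-q)$ and $(p-2)+(2-q)=p-q$, one obtains exactly
\begin{equation*}
\lambda^{p-2}(1+\mu\|b_2\|_\infty)^{2-q}\;\ge\;(2-q)^{2-q}\left(\frac{p-2}{\|a_+\|_{q'}}\right)^{p-2}\left(\frac{S_p}{p-q}\right)^{p-q}=\Upsilon_0,
\end{equation*}
contradicting (\ref{lambda}). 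The only real obstacle is bookkeeping the algebra in this last combining step; everything else is a direct application of the structure of $M^0_{\lambda,\mu}$ and of standard Hölder/Sobolev estimates, with sign information on $a_-$ and $b_1$ used only to absorb the ``bad'' parts of the weights into inequalities of the correct sign.
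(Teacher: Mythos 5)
Your argument is correct and is exactly the standard one: the paper does not write out this proof (it only cites Brown--Wu, Lemma 2.2), and your contradiction scheme --- solving $F_u'(1)=F_u''(1)=0$ for the two integrals, bounding each by H\"older/Sobolev, and combining the resulting upper bound on $\|u\|_A^{2-q}$ with the lower bound on $\|u\|_A^{p-2}$ --- is precisely what that reference does. The only discrepancy is in the normalization of $S_p$: you use $\|u\|_p\le S_p^{-1}\|u\|_A$, which after the final combination (your exponent identity $p(2-q)+q(p-2)=2(p-q)$) produces $\bigl(S_p^2/(p-q)\bigr)^{p-q}$ rather than the $\bigl(S_p/(p-q)\bigr)^{p-q}$ appearing in $\Upsilon_0$. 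The paper's displayed definition of $S_p$ is itself not scale-invariant as written, but its use elsewhere (e.g.\ in deriving (\ref{u.em.nmais}), where $\|u\|_p^q\le S_p^{-q/2}\|u\|_A^q$) shows the intended convention is $\|u\|_p\le S_p^{-1/2}\|u\|_A$; with that convention your computation reproduces $\Upsilon_0$ exactly. This is a bookkeeping mismatch with an ambiguous definition, not a gap in the reasoning.
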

		\begin{proof}The proof is similar to what was done in \cite[Lemma 2.2]{BW}.
			
		\end{proof}
		
		With this result we have just shown that $\mu\geq 0$ and $\lambda>0$ are such that $\lambda^{p-2}(1+\mu||b_2||_{\infty})^{2-q} < \Upsilon_0$ then 
		\begin{equation}\label{uniao}
		M_{\lambda, \mu}=M^+_{\lambda, \mu} \cup M^-_{\lambda, \mu}.
		\end{equation}

		
		The essential nature of fibering map $F_u$ is determined by sign of $ \intf$. Consider the function $ m_{\mu,u}:\R^+\rightarrow \R$ defined by
		\begin{eqnarray}\label{m1}
		m_{\mu,u}(t) &=& t^{2-q}||u||^2_A-t^{p-q}\intg.
		\end{eqnarray}
		Observe that,
		\begin{eqnarray}\label{m'.e.f''}
		m'_{\mu,u}(t)&=& t^{-q}F''_u(t), \;\;\mbox{for}\;\; tu\in M_{\lambda,\mu}.
		\end{eqnarray}
Thus, knowing the signal of $m'_{\mu,u}$, we get the sign of $F''_{tu}(t)$, and so we can conclude if $F_{tu}$ has a local minimum point, maximum local or inflection point.

Note that, for $t>0$, $tu \in M_{\lambda,\mu}$ if and only if
		\begin{eqnarray}\label{m2}
		m_{\mu,u}(t) &=& \intf.
		\end{eqnarray}
		Deriving (\ref{m1}) we have
		\begin{eqnarray}\label{m'}
		m'_{\mu,u}(t) &=&(2-q) t^{1-q}||u||^2_A-(p-q)t^{p-q-1}\intg.
		\end{eqnarray}
		Also
		\small{
			\begin{eqnarray}\label{m''}
			m''_{\mu,u}(t) &=&(2-q)(1-q) t^{-q}||u||^2_A-(p-q)(p-q-1 )t^{p-q-2}\intg.
			\end{eqnarray}		}		
		\normalsize
		To construct an outline of $m_u$ we can analyze (\ref{m'}) and observe that as $1<q<2<p<2^*$, then $m_{\mu,u}(t) \rightarrow 0$ with values greater than zero as $t \rightarrow 0$. Also, $m_{\mu,u}(t) \rightarrow -\infty$ as $t \rightarrow \infty$, whence we get the following sketch to $m_{\mu,u}(t)$.
		
		\begin{figure}[h]
			\begin{center}
				\includegraphics[scale=0.5]{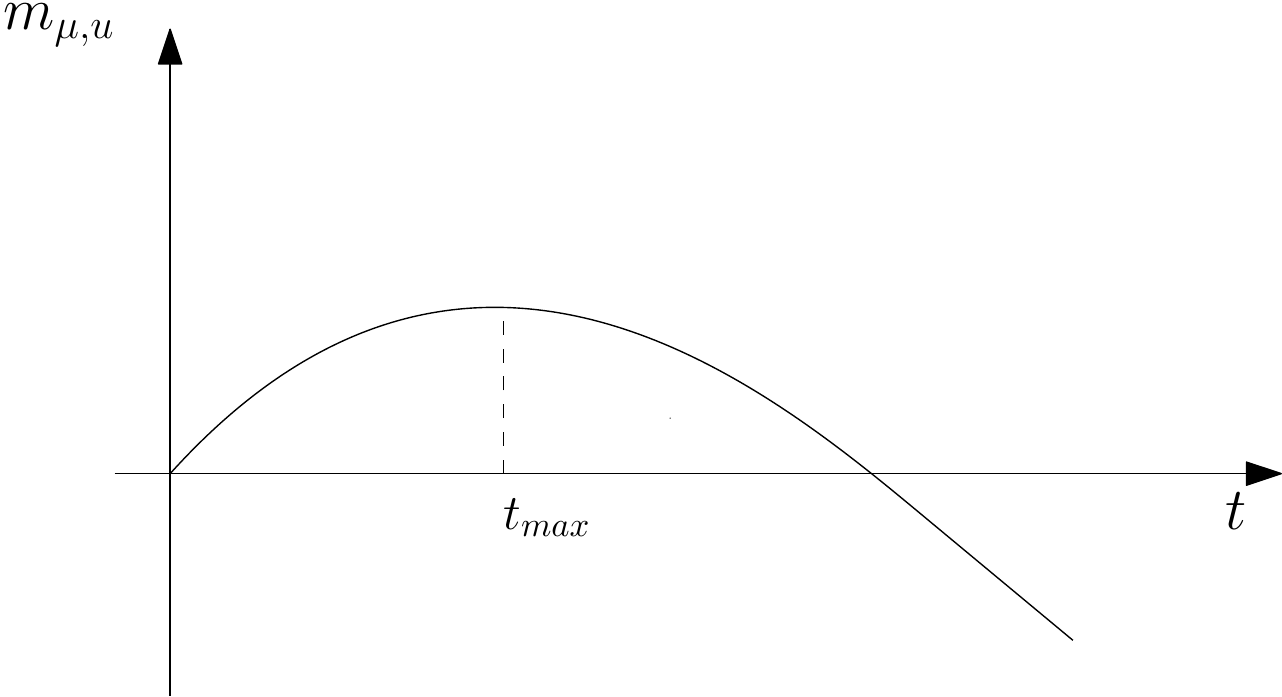}
			\end{center}
			\caption{Sketch of $m_{\mu,u}$ }
			\label{figu1}
		\end{figure}
		
		Looking at the graph and the relationship (\ref{m'.e.f''}), we have $tu \in M_{\lambda,\mu}^+$ (respectivamente, $ M_{\lambda,\mu}^-$) if and only if $m_{\mu,u}'(t)>0$ (respectively, $m_{\mu,u}'(t)<0$). So, if $u\in \HAo$, as $\intg>0$, $m_{\mu,u}(t)$ has a single critical point in $t=t_{\max}(u)$, where
		\begin{equation}\label{2.6}
		t_{\max}(u)=\left(\frac{(2-q)||u||^2_A}{(p-q)\intg}\right)^{\frac{1}{p-2}}>0.
		\end{equation}
		
		Therefore, we have two situations to study.
		\begin{description}
			\item[  ($I$)]  $\intf \leq 0.$
			
			 		In this case, we see that $\intf \leq 0$, there will be a single value $t^-(u)$ satisfying (\ref{m2}), with  $t^-(u)>t_{\max}(u)$  and such that $ m_{\mu,u}'(t^-(u))<0 .$ With this and by the relationship (\ref{m'.e.f''}), we have to for each $u \in \HA$ such that $\intf \leq 0$, there will be only one $t^-(u)$ such that $t^-(u)u \in  M_{\lambda,\mu}^-.$ We then obtain an outline for the $F_u$.
			\item[$(II)$] $ m_{\mu,u}(t_{\max}(u))> \intf > 0.$
			
		We see that if $\intf $ is such that $ m_{\mu,u}(t_{\max}(u))> \intf >0$, there will be $t^+(u)$ and $t^-(u)$ satisfying (\ref{m2}), with  $t^-(u)>t_{\max}(u) >t^+(u)$  and such that $ m_{\mu,u}'(t^-(u))<0 $ and $ m_{\mu,u}'(t^+(u))>0 .$ With this and by the relationship (\ref{m'.e.f''}), we have to for each $u \in \HA$ under these conditions, there will be $t^+(u)$ and $t^-(u)$ such that $t^+(u) u \in  M_{\lambda,\mu}^+$ and $t^-(u)u \in  M_{\lambda,\mu}^-.$ We then obtain an outline for the $F_u$.  
		\end{description}
		
Concluding, $m_{\mu,u}(t)$ is increasing in $(0,t_{\max}(u))$ and decreasing in $(t_{\max}(u),+\infty)$. Still, imposing the condition $\lambda^{p-2} (1+\mu||b_2||_{\infty})^{2-q} < \Upsilon_0,$ we get $ m_{\mu,u}(t_{\max}(u))  > \intf $.
		
	For this analysis we have just demonstrated the following result.
		
		\begin{lemma}\label{Nehari}
			For each $u \in  \HA \setminus\{0\}$ and $\mu >0$ we have
			\begin{description}
				\item[ $(i)$ ]    If $\intf \leq 0$, there is a single $t^-(u) > t_{\max}(u)$ such that $t^-(u)u \in M^-_{\lambda,\mu}$. Also, $F_{u}(t)$ is increasing in $(0,t^-(u))$, decreasing in $(t^-(u),+\infty)$ and $F_{u}(t)\rightarrow -\infty$ as $t\rightarrow +\infty$.
				\item[ $(ii)$ ] If $\intf > 0$ and $\lambda$ is such that $\lambda^{p-2}(1+\mu||b_2||_{\infty})^{2-q} < \Upsilon_0$, so there is $0<t^+(u) < t_{\max}(u)<t^-(u) $ such that $t^\pm(u)u \in M^\pm_{\lambda,\mu}$. Also, $F_{u}(t)$ is decreasing in $(0,t^+(u))$, increasing in $(t^+(u),t^-(u))$ and decreasing in $(t^-(u),+\infty)$. Furthermore, $F_{u}(t)\rightarrow -\infty$ as $t\rightarrow +\infty$.	  
			\end{description}		
		\end{lemma}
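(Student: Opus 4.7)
My plan is to reduce the whole statement to a one-variable analysis of the function $m_{\mu,u}$ from (\ref{m1}). The starting point, obtained by comparing (\ref{phi'}) with (\ref{m1}), will be the identity
\[
F'_u(t)\;=\;t^{q-1}\bigl(m_{\mu,u}(t)-\intf\bigr),
\]
so the positive zeros of $F'_u$ correspond exactly to the solutions of $m_{\mu,u}(t)=\intf$, and the sign of $F'_u(t)$ is the sign of $m_{\mu,u}(t)-\intf$. Next I would pin down the profile of $m_{\mu,u}$: using (\ref{m'}), the formula (\ref{2.6}), and the fact that $1<q<2<p$, one checks that $m_{\mu,u}(0^{+})=0$, that $m_{\mu,u}$ is strictly positive and strictly increasing on $(0,t_{\max}(u)]$, attains its unique global maximum at $t_{\max}(u)$, and decreases strictly to $-\infty$ on $(t_{\max}(u),+\infty)$. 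This is exactly the sketch of Figure \ref{figu1}, and I will use it together with (\ref{m'.e.f''}), which turns the sign of $m'_{\mu,u}$ at a Nehari point into the sign of $F''_u$, hence into membership in $M^{+}_{\lambda,\mu}$ or $M^{-}_{\lambda,\mu}$.

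For case $(i)$, with $\intf\le 0$, the horizontal line $y=\intf$ sits at or below $0$ while $m_{\mu,u}>0$ on $(0,t_{\max}(u)]$, so there is no root there; strict decrease on $(t_{\max}(u),+\infty)$ together with $m_{\mu,u}\to-\infty$ then yields a unique $t^{-}(u)>t_{\max}(u)$ with $m_{\mu,u}(t^{-}(u))=\intf$. Because $t^{-}(u)>t_{\max}(u)$ one has $m'_{\mu,u}(t^{-}(u))<0$, so (\ref{m'.e.f''}) gives $F''_u(t^{-}(u))<0$, i.e.\ $t^{-}(u)u\in M^{-}_{\lambda,\mu}$. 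From the factorization of $F'_u$ above, $F'_u>0$ on $(0,t^{-}(u))$ and $F'_u<0$ on $(t^{-}(u),+\infty)$, which is the claimed monotonicity, and $F_u(t)\to-\infty$ follows from the dominant $-\frac{t^p}{p}\intg$ term (recall $\intg>0$ by $(B_1)$--$(B_2)$).

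For case $(ii)$, with $\intf>0$ and (\ref{des.do.teo}) in force, everything hinges on the estimate
\[
m_{\mu,u}(t_{\max}(u))>\intf\qquad\text{for every } u\in\HAo,
\]
which I expect to be the chief obstacle. My plan is to substitute (\ref{2.6}) into (\ref{m1}), obtaining $m_{\mu,u}(t_{\max}(u))$ as an explicit positive constant times $\|u\|_A^{2(p-q)/(p-2)}/(\intg)^{(2-q)/(p-2)}$, then bound $\intg\le(1+\mu\|b_2\|_\infty)\|u\|_p^p$ from above, use H\"older to get $\intf\le\lambda\|a_+\|_{q'}\|u\|_p^q$, and apply the Sobolev constant $S_p$ from (\ref{Sp}); after elementary algebra the desired inequality reduces to exactly $\lambda^{p-2}(1+\mu\|b_2\|_\infty)^{2-q}<\Upsilon_0$, i.e.\ the hypothesis---this is the same computation used in Lemma \ref{N0}. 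Once this estimate is in hand, the level $y=\intf$ lies strictly between $0$ and the maximum of $m_{\mu,u}$, so the profile forces exactly two roots $0<t^{+}(u)<t_{\max}(u)<t^{-}(u)$; at these points $m'_{\mu,u}>0$ and $m'_{\mu,u}<0$ respectively, and (\ref{m'.e.f''}) then places $t^{\pm}(u)u\in M^{\pm}_{\lambda,\mu}$. The sign of $F'_u(t)=t^{q-1}(m_{\mu,u}(t)-\intf)$ changes precisely at $t^{+}(u)$ and at $t^{-}(u)$, giving the decreasing--increasing--decreasing pattern on $(0,t^{+}(u))$, $(t^{+}(u),t^{-}(u))$, $(t^{-}(u),+\infty)$, and once more $F_u(t)\to-\infty$ from the leading $-t^p$ term.
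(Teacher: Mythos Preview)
Your proposal is correct and follows essentially the same route as the paper: the paper's proof is precisely the discussion preceding the lemma statement (the analysis of $m_{\mu,u}$ via (\ref{m1})--(\ref{m''}), the sketch in Figure~\ref{figu1}, and cases $(I)$ and $(II)$), and your argument reproduces that analysis with a bit more detail --- in particular the explicit factorization $F'_u(t)=t^{q-1}\bigl(m_{\mu,u}(t)-\intf\bigr)$ and the outline of the computation showing $m_{\mu,u}(t_{\max}(u))>\intf$ under the hypothesis. One small slip: in case $(ii)$ you cite (\ref{des.do.teo}), but the relevant hypothesis in the lemma is (\ref{lambda}), i.e.\ $\lambda^{p-2}(1+\mu\|b_2\|_\infty)^{2-q}<\Upsilon_0$, which is indeed what your reduction produces.
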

		
		We will now state another result that will be used in estimating the functional energy levels.
		
		\begin{lemma}\label{2.6(iii)}
			If $u \in  \HA \setminus\{0\}$, then
			\begin{description}
			\item[$(i)$ ]$ t^-(u)$ is a continuous function for $u\in \HA \setminus\{0\};$   
			\item[$(ii)$ ]  $ M^-_{\lambda,\mu}=\{ u\in \HA; \;\; \frac{1}{||u||_A}t^-(u)(\frac{u}{||u||_A})=1  \}.$  
			\end{description}

		\end{lemma}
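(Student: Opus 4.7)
The plan is to exploit the variational characterization of $t^-(u)$ from Lemma \ref{Nehari}, together with the uniqueness clause guaranteed there. The unifying observation is that $t^-(u)$ is the unique solution $t>t_{\max}(u)$ of $G(u,t)=0$, where
\[
G(u,t)=t^{2-q}\|u\|_A^2-t^{p-q}\intg-\intf.
\]

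For part $(i)$, I would apply the implicit function theorem to $G$. Since $1<q<2<p<2^{*}$, the Nemytskii integrals $u\mapsto\intg$ and $u\mapsto\intf$ are $C^1$ on $\HA$ (using the Sobolev embedding of $\HA$ into $L^r(\R^N)$ combined with the diamagnetic inequality to control $|u|$), so $G$ is $C^1$ on $(\HA\setminus\{0\})\times(0,\infty)$. At $(u_0,t^-(u_0))$ one has $\partial_t G = m_{\mu,u_0}'(t^-(u_0))<0$, because $t^-(u_0)>t_{\max}(u_0)$ lies in the region where $m_{\mu,u_0}$ is strictly decreasing. The implicit function theorem then gives a continuous branch $u\mapsto\tau(u)$ in a neighborhood of $u_0$ with $G(u,\tau(u))=0$ and $\tau(u_0)=t^-(u_0)$. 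Because $t_{\max}(u)$ depends continuously on $u$ (it is the explicit algebraic expression \eqref{2.6}) and $\tau(u_0)>t_{\max}(u_0)$, we have $\tau(u)>t_{\max}(u)$ on a smaller neighborhood, and the uniqueness in Lemma \ref{Nehari} forces $\tau(u)=t^-(u)$ there.

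For part $(ii)$, I would first record the homogeneity identity
\[
t^-(\alpha u)=\frac{t^-(u)}{\alpha}\qquad\text{for every }\alpha>0,
\]
which follows from the observation that $(t^-(u)/\alpha)(\alpha u)=t^-(u)\,u\in M^-_{\lambda,\mu}$, combined with uniqueness of the positive scalar sending $\alpha u$ into $M^-_{\lambda,\mu}$. Applying this with $\alpha=1/\|u\|_A$ yields $\frac{1}{\|u\|_A}\,t^-\!\left(u/\|u\|_A\right)=t^-(u)$. Since $u\in M^-_{\lambda,\mu}$ is equivalent (again by uniqueness) to $t^-(u)=1$, the claimed set equality follows immediately.

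The step I expect to be the most delicate is the continuity in $(i)$: one must ensure that the continuous branch furnished by the implicit function theorem really is the ``large'' root $t^-(u)$ and not the smaller root $t^+(u)$ that coexists in case $(ii)$ of Lemma \ref{Nehari}. Separating the two roots by the continuous barrier $t_{\max}$ is the crux. If one prefers to bypass the implicit function theorem, the same conclusion can be obtained by a direct sequential argument: for $u_n\to u_0$ in $\HA$, show $\{t^-(u_n)\}$ is bounded above (using that $F_{u_n}(t)\to -\infty$ as $t\to\infty$, uniformly in $n$, via the $L^p$-convergence of $|u_n|^p$) and bounded away from zero (using $t^-(u_n)>t_{\max}(u_n)$ and continuity of $t_{\max}$); then extract a subsequential limit $t_{*}\geq t_{\max}(u_0)$ satisfying $G(u_0,t_{*})=0$, which by uniqueness forces $t_{*}=t^-(u_0)$.
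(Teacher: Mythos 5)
Your argument is correct and is essentially the standard fibering-map proof that the paper itself defers to (its ``proof'' is only a citation of \cite[Lemma 2.6(iii)-(iv)]{Wu}): continuity of $t^-$ comes from the implicit function theorem applied to $G(u,t)=m_{\mu,u}(t)-\intf$ at a point where $\partial_t G=m'_{\mu,u}(t^-(u))<0$, with the continuous barrier $t_{\max}(u)$ correctly invoked to rule out sliding onto the branch $t^+$, and part (ii) follows from the scaling identity $t^-(\alpha u)=t^-(u)/\alpha$ together with the equivalence $u\in M^-_{\lambda,\mu}\Leftrightarrow t^-(u)=1$. I see no gaps; the only implicit standing assumption is the condition $\lambda^{p-2}(1+\mu\|b_2\|_{\infty})^{2-q}<\Upsilon_0$ ensuring $t^-(u)$ exists for every $u\neq 0$, which is in force throughout this section.
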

		
		\begin{proof}
			The proof is similar to that made in \cite[Lemma 2.6(iii)-(iv)]{Wu}.
		\end{proof}
		
		By Lemma \ref{Nehari}, we can see that the functional is not bounded from below in $\HA$. In this case, we consider the Nehari manifold, where $\jf$ has good behavior, as will be shown in Lemma \ref{bounded} next. Still, by Lemmas \ref{N0} and \ref{Nehari}, we can see that under certain conditions of $\lambda$ and $\mu$, we have a minimizer in $M^+_{\lambda,\mu}$ and another in $M^-_{\lambda,\mu}$, whose minimum levels of energy will be denoted respectively by
		$$m^+_{\lambda,\mu}=\inf_{u \in M^+_{\lambda,\mu}}\jf (u) $$
		and
		$$m^-_{\lambda,\mu} =\inf_{u \in M^-_{\lambda,\mu}}\jf (u)  .$$
		Our next result shows that these points are well defined.
		
		\begin{lemma}\label{bounded}
			The functional $\jf$ is coercive and bounded from below in $M_{\lambda,\mu}$.
		\end{lemma}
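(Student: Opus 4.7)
The plan is to exploit the Nehari constraint to eliminate the superlinear term from $\jf$, leaving an expression of the form $\alpha\|u\|_A^2 - \beta\|u\|_A^q$ with $\alpha,\beta>0$ and $q<2$, from which both coercivity and the lower bound follow immediately.

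First, I would fix $u\in M_{\lambda,\mu}$ and use $\langle \jf'(u),u\rangle=0$, i.e.\ $\|u\|_A^2=\intf+\intg$, to substitute $\intg=\|u\|_A^2-\intf$ into (\ref{funcional.1}). This yields
\begin{equation*}
\jf(u)=\left(\frac{1}{2}-\frac{1}{p}\right)\|u\|_A^2-\left(\frac{1}{q}-\frac{1}{p}\right)\intf,
\end{equation*}
where both parenthesised coefficients are strictly positive because $1<q<2<p$.

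Next I would estimate the remaining integral. Since $a(x)\in L^{q'}(\R^N)$ with $q'=p/(p-q)$ and $a_\lambda=\lambda a_++a_-$, the triangle inequality gives $\|a_\lambda\|_{q'}\leq (\lambda+1)\|a\|_{q'}$. Applying Hölder's inequality with exponents $q'$ and $p/q$ and then the Sobolev-type embedding encoded in (\ref{Sp}) (together with the diamagnetic inequality to pass from $|u|$ to $|\nabla_Au|$), I obtain
\begin{equation*}
\left|\intf\right|\leq \|a_\lambda\|_{q'}\|u\|_p^q\leq C(\lambda)\,\|u\|_A^q
\end{equation*}
for some constant $C(\lambda)>0$ depending only on $\lambda$, $\|a\|_{q'}$ and $S_p$. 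Plugging this back in,
\begin{equation*}
\jf(u)\geq \left(\frac{1}{2}-\frac{1}{p}\right)\|u\|_A^2-\left(\frac{1}{q}-\frac{1}{p}\right)C(\lambda)\,\|u\|_A^q,
\end{equation*}
and since $q<2$, the right-hand side tends to $+\infty$ as $\|u\|_A\to\infty$ (coercivity) and is bounded below on $[0,\infty)$ by elementary single-variable analysis of $t\mapsto \alpha t^2-\beta t^q$.

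The only subtlety I anticipate is the application of the embedding (\ref{Sp}) in the complex/magnetic setting: I need $\|u\|_p\leq C\|u\|_A$ for every $u\in\HA$, which follows by combining the diamagnetic inequality $|\nabla|u||\leq|\nabla_Au|$ with the standard Sobolev embedding of $H^1(\R^N)$ into $L^p(\R^N)$ for $2\leq p\leq 2^*$. Once this is in place, the remainder of the argument is routine and the lemma follows.
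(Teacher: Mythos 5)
Your argument is correct and is essentially the proof the paper intends: the paper simply defers to \cite[Lemma 2.1]{Hsu}, which uses exactly this substitution of the Nehari constraint to reduce $\jf(u)$ to $\left(\frac12-\frac1p\right)\|u\|_A^2-\left(\frac1q-\frac1p\right)\intf$, followed by the H\"older--Sobolev (diamagnetic) bound $\left|\intf\right|\leq C(\lambda)\|u\|_A^q$ with $q<2$. No gaps; the subtlety you flag about passing through the diamagnetic inequality is handled correctly.
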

		
		\begin{proof} The proof is similar to that made in \cite[Lemma 2.1]{Hsu}.
		\end{proof}
		
		For the next results we will need some estimates about the values of the functions in $m^\pm_{\lambda,\mu}$. To do this, consider $\lambda^{p-2}(1+\mu||b_2||_{\infty})^{2-q}<\Upsilon_0$, so by (\ref{2.2.1}) 
		\begin{equation*} 
		||u||_A^2<   \frac{p-q}{p-2}\intf  \leq  \Upsilon_0^{1/(p-2)} \frac{p-q}{p-2} S_p^{\frac{-q}{2}} ||a_+||_{L^{q'}} ||u||_A^q,
		\end{equation*}
whence
		\begin{equation}\label{u.em.nmais}
		||u||_A \leq \left( \Upsilon_0^{1/(p-2)} \frac{p-q}{p-2} S_p^{\frac{-q}{2}} ||a_+||_{L^{q'}}\right)^{1/(2-q)} ||u||_A^q,
		\end{equation}
	for all $u\in M^+_{\lambda,\mu}.$ Also, if $\lambda=0$, then (\ref{lambda}) is satisfyied, then, by Lemma \ref{Nehari}(i), $ M^+_{\lambda,\mu}=\emptyset $ and by (\ref{uniao}) we have $ M_{\lambda,\mu}= M^-_{\lambda,\mu}$ for all $\mu\geq 0$. By what has been seen, we will show the following results on the values of $m^\pm_{\lambda,\mu}$. 
		
		\begin{lemma}\label{teorema3.1}
			\begin{description}
				\item[ $(i)$  ] If $\lambda^{p-2} (1+\mu||b_2||_{\infty})^{2-q} < (\frac{q}{2})^{p-2} \Upsilon_0, $ then $m^-_{\lambda,\mu}>0$;  
				\item[ $(ii)$  ]  Let $\lambda>0$ and $\mu \geq 0$ be such that $\lambda^{p-2} (1+\mu||b_2||_{\infty})^{2-q} <\Upsilon_0,$ then $m^+_{\lambda,\mu}<0.$ In particular, if $\lambda^{p-2}(1+\mu||b_2||_{\infty})^{2-q}<(\frac{q}{2})^{p-2}\Upsilon_0,$ then
				$$m^+_{\lambda,\mu}=\inf_{M_{\lambda,\mu}}\jf(u).$$
			\end{description}

		\end{lemma}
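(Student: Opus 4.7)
The proof of both parts starts from the following rewriting of the energy along the Nehari manifold. Using $||u||_A^2=\int_{\R^N} a_\lambda|u|^q\,dx+\int_{\R^N} b_\mu|u|^p\,dx$ for $u\in M_{\lambda,\mu}$ to eliminate the $b_\mu$-term in $J_{\lambda,\mu}$, one finds
\begin{equation*}
J_{\lambda,\mu}(u)=\frac{p-2}{2p}||u||_A^2-\frac{p-q}{pq}\int_{\R^N}a_\lambda(x)|u|^q\,dx.
\end{equation*}
On either $M^+_{\lambda,\mu}$ or $M^-_{\lambda,\mu}$ the sign of $F''_u(1)$ will be used, together with the Sobolev inequality encoded in (\ref{Sp}), to turn this identity into a one-sided estimate for $J_{\lambda,\mu}(u)$.

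\textbf{Part (i).} Fix $u\in M^-_{\lambda,\mu}$. Substituting the Nehari identity into $F''_u(1)$ shows that $F''_u(1)<0$ amounts to $(2-q)||u||_A^2<(p-q)\int_{\R^N} b_\mu|u|^p\,dx$, and using $b_\mu(x)\leq 1+\mu||b_2||_\infty$ together with (\ref{Sp}) this produces the uniform lower bound
\begin{equation*}
||u||_A^{p-2}>\frac{(2-q)\,S_p^{p/2}}{(p-q)(1+\mu||b_2||_\infty)}.
\end{equation*}
On the other hand, since $a_-\leq 0$, Hölder's inequality and (\ref{Sp}) give the upper bound $\int_{\R^N}a_\lambda|u|^q\,dx\leq\lambda||a_+||_{q'}S_p^{-q/2}||u||_A^q$. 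Substituting this into the rewriting above,
\begin{equation*}
J_{\lambda,\mu}(u)\geq ||u||_A^q\left[\tfrac{p-2}{2p}||u||_A^{2-q}-\tfrac{p-q}{pq}\,\lambda||a_+||_{q'}S_p^{-q/2}\right].
\end{equation*}
Plugging the lower bound for $||u||_A^{2-q}$ into the bracket and raising to the $(p-2)$-th power, the exponents of $S_p$ coming from the two separate applications of (\ref{Sp}) add up to $p-q$, and a direct rearrangement shows that the bracketed quantity is bounded below by a strictly positive constant exactly when $\lambda^{p-2}(1+\mu||b_2||_\infty)^{2-q}<(q/2)^{p-2}\Upsilon_0$. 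Combined with the uniform lower bound on $||u||_A$, this gives $m^-_{\lambda,\mu}>0$.

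\textbf{Part (ii).} Under $\lambda^{p-2}(1+\mu||b_2||_\infty)^{2-q}<\Upsilon_0$, the set $M^+_{\lambda,\mu}$ is non-empty: since $a_+\not\equiv 0$, one can choose $w\in\HA$ supported in $\{a_+>0\}$ (so that $\int a_\lambda|w|^q\,dx>0$), and Lemma~\ref{Nehari}(ii) produces $t^+(w)w\in M^+_{\lambda,\mu}$. For any $v\in M^+_{\lambda,\mu}$, the condition $F''_v(1)>0$ together with the Nehari identity yields the strict inequality $\int_{\R^N}a_\lambda|v|^q\,dx>\frac{p-2}{p-q}||v||_A^2$, and inserting this into the rewriting of $J_{\lambda,\mu}$ from the strategy gives
\begin{equation*}
J_{\lambda,\mu}(v)<-\frac{(p-2)(2-q)}{2pq}||v||_A^2<0,
\end{equation*}
so $m^+_{\lambda,\mu}\leq J_{\lambda,\mu}(v)<0$. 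Finally, the stronger hypothesis $\lambda^{p-2}(1+\mu||b_2||_\infty)^{2-q}<(q/2)^{p-2}\Upsilon_0$ lets us invoke part (i) to conclude $m^-_{\lambda,\mu}>0>m^+_{\lambda,\mu}$, and since $M_{\lambda,\mu}=M^+_{\lambda,\mu}\cup M^-_{\lambda,\mu}$ by (\ref{uniao}), we obtain $\inf_{M_{\lambda,\mu}}J_{\lambda,\mu}=\min(m^+_{\lambda,\mu},m^-_{\lambda,\mu})=m^+_{\lambda,\mu}$.

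\textbf{Main obstacle.} Conceptually the proof is routine, so the main difficulty is the algebraic bookkeeping in (i): one must match precisely the threshold $(q/2)^{p-2}\Upsilon_0$ from the hypothesis, which requires tracking the compensating powers of $S_p$ that arise from the two distinct uses of the Sobolev embedding (one on the $b_\mu$-term to produce the lower bound on $||u||_A$, the other on the $a_\lambda$-term to control the negative part of $J_{\lambda,\mu}$).
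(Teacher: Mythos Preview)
Your proof is correct and follows precisely the standard approach to which the paper defers (the paper itself only writes ``similar to what was done in \cite[Theorem 3.1]{Wu}''): rewriting $J_{\lambda,\mu}$ on the Nehari manifold via the constraint, extracting the lower bound on $\|u\|_A$ from the $M^-$-condition and the Sobolev inequality, and controlling the $a_\lambda$-term by H\"older, with the algebra collapsing exactly to the threshold $(q/2)^{p-2}\Upsilon_0$. Your treatment of part (ii) --- non-emptiness of $M^+_{\lambda,\mu}$ via Lemma~\ref{Nehari}(ii), the strict negativity from $F''_v(1)>0$, and the identification $\inf_{M_{\lambda,\mu}}J_{\lambda,\mu}=m^+_{\lambda,\mu}$ via (\ref{uniao}) and part (i) --- is likewise the expected argument.
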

		
		\begin{proof} The proof is similar to what was done in \cite[Theorem 3.1]{Wu}.
		\end{proof}
		
		By Lemmas \ref{Nehari} and \ref{teorema3.1}, we can conclude that for every $u \in \HAo$
		\begin{equation}
		\jf (t^-(u)u) = \max_{t\leq 0} \jf(tu),
		\end{equation}
		whenever $ \lambda^{p-2} (1+\mu||b_2||_{\infty})^{2-q}  < \left( \frac{q}{2} \right)^{p-2}\Upsilon_0,$ with $\lambda\geq 0 $ and $\mu>0.$ Moreover, if there exists $t^+(u)$, then $\jf(t^+(u)u) = \min_{0\geq t \geq t^-(u)}\jf(tu)$.  These properties are essential to show the existence of a $\Plm$ solutions. 
		
 \section{The Existence of a Solution for $\Plm$ }\label{existencia}

In this section we will show the existence of solutions to the problem $\Plm$ for $\Upsilon_0>0 $ and $\mu>0.$ First we will establish a local compactness lemma, for this, consider the following semilinear elliptical problem
$$
\left\{ \begin{array} [c]{ll}
- \Delta_A  u + u =  |u|^{p-2}u   \, \, \mbox{in} \, \,            \R^N, & \\
u \in \HA.&\\
\end{array}
\right.\leqno {(P_A)}
$$
Let $J_{\infty}(u)=\frac{1}{2}||u||^2_A-\frac{1}{p}||u||_p^p$, the functional associated with the problem ($P_A$), then $J_{\infty}$ is a functional $C^2$ in $\HA$. The Nehari manifold associated with problem $(P_A)$ is given by 
$$M_{\infty}=\{ u \in \HAo; \; J'_{\infty}(u)u=0 \}.$$ 
In this problem we can observe if $ u \in M_{\infty}$, then $||u||_A^2=||u||_p^p$. Now consider the following minimization problem
\begin{equation}\label{m.inf}
m_{\infty}= \inf_{M_{\infty}}J_{\infty}(u).
\end{equation}
To show the existence of a solution to this minimization problem let us compare our problem with the one described below. Consider
\begin{equation*}
S=\{ u\in \HA; \int_{\R^N}|u|^pdx=1 \},
\end{equation*}
and the following minimization problem
\begin{equation}\label{M.inf}
S_{\infty}= \inf_{u \in S} ||u||^2_A.
\end{equation}


By an adaptation of the result of D'avenia and Squassina \cite[Theorem 4.3]{davenia} for our case, there is $u_0$ satisfying the problem  (\ref{M.inf}), that is, there exists $u_0 \in \HA$ such that $S_{\infty}= \inf_{u \in S} ||u||^2_A=||u_0||^2_A$ and $\int_{\R^N}|u_0|^pdx=1 $. Comparing the levels of problems (\ref{m.inf}) and (\ref{M.inf}) we will show the following.

\begin{lemma}\label{lema.m.inf}
	Exists $\bar{u} \in \HA$ such that $m_{\infty}= \inf_{M_{\infty}}J_{\infty}(u)=J_{\infty}(\bar{u})$. 
	
\end{lemma}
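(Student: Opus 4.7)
The plan is to show that $m_\infty$ is attained by rescaling the minimizer $u_0$ of $S_\infty$ given by the adapted D'Avenia--Squassina result. The key observation is that the Nehari constraint $\|v\|_A^2 = \|v\|_p^p$ reduces $J_\infty$ to a multiple of either norm, so minimizing $J_\infty$ on $M_\infty$ is equivalent (up to a power and a positive constant) to minimizing $\|\cdot\|_A^2$ on the unit $L^p$-sphere $S$.

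First I would establish the scaling correspondence: for any $u \in H_A^1(\R^N)\setminus\{0\}$, the equation $\|tu\|_A^2=\|tu\|_p^p$ has the unique positive solution
$$t(u)=\left(\frac{\|u\|_A^2}{\|u\|_p^p}\right)^{1/(p-2)},$$
so $t(u)u\in M_\infty$. Applied to $u\in S$ this gives $t(u)=\|u\|_A^{2/(p-2)}$.

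Next, on $M_\infty$ one has $\|v\|_A^2=\|v\|_p^p$, hence
$$J_\infty(v)=\left(\tfrac{1}{2}-\tfrac{1}{p}\right)\|v\|_A^2=\frac{p-2}{2p}\|v\|_A^2.$$
Writing any $v\in M_\infty$ as $v=t(u)u$ with $u=v/\|v\|_p\in S$ yields $\|v\|_A^2=\|u\|_A^{2p/(p-2)}$, so
$$J_\infty(v)=\frac{p-2}{2p}\,\|u\|_A^{2p/(p-2)}\geq\frac{p-2}{2p}\,S_\infty^{p/(p-2)}.$$
Taking the infimum over $M_\infty$ gives $m_\infty\geq \frac{p-2}{2p}S_\infty^{p/(p-2)}$. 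Conversely, given the minimizer $u_0\in S$ of (\ref{M.inf}) produced by the D'Avenia--Squassina type argument, I set $\bar u:=t(u_0)u_0=S_\infty^{1/(p-2)}u_0$. Then $\bar u\in M_\infty$ and the computation above gives $J_\infty(\bar u)=\frac{p-2}{2p}S_\infty^{p/(p-2)}$, so equality is achieved and $m_\infty=J_\infty(\bar u)$.

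The only substantive point to verify is that $S_\infty>0$, which follows from the Sobolev embedding $H^1_A(\R^N)\hookrightarrow L^p(\R^N)$ (a consequence of the diamagnetic inequality combined with the usual embedding $H^1(\R^N)\hookrightarrow L^p(\R^N)$ for $2<p<2^*$); without this one could not conclude that $\bar u\neq 0$ nor that $m_\infty$ is a genuine minimum. The main obstacle is really packaged into the cited D'Avenia--Squassina existence result, since the scaling bijection between $S$ and $M_\infty$ and the reduction of $J_\infty$ to a power of $\|\cdot\|_A$ on the constraint are elementary once that existence is granted.
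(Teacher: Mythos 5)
Your proposal is correct and follows essentially the same route as the paper: both use the minimizer $u_0$ of $S_{\infty}$ on the unit $L^p$-sphere, rescale it by $\|u_0\|_A^{2/(p-2)}=S_{\infty}^{1/(p-2)}$ to land on $M_{\infty}$, and exploit the identity $J_{\infty}(v)=\frac{p-2}{2p}\|v\|_A^2$ on the Nehari manifold to compare levels. Your explicit remark that $S_{\infty}>0$ (via the diamagnetic inequality and Sobolev embedding) is a point the paper leaves implicit, but the substance of the argument is identical.
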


\begin{proof}
	Let $u_0$ be minimization problem solution (\ref{M.inf}). Consider $\tilde{u} = \frac{u}{(\int|u|^p)^{\frac{1}{p}}}$ with $u \in \HA$. Thus, $||\tilde{u}||_p=\frac{||u||_p}{||u||_p}=1$ , giving us that  $ \tilde{u}\in S$. Then
	$$||\tilde{u}||_A^2\geq ||u_0||^2_A.$$
	Consequently, if $u \in M_{\infty}$	
	\begin{eqnarray}
	\nonumber ||u_0||^2_A &\leq & || \tilde{u}||_A^2=||u||_A^2\frac{1}{(\int|u|^p)^{2/p}}\\
	\nonumber	& =&||u||_A^2\frac{1}{||u||_A^{4/p}}=||u||_A^{\frac{2(p-2)}{p}},
	\end{eqnarray}
	whence
	\begin{equation}
	||u_0 ||^{\frac{2p}{p-2}}_A  \leq ||u||_A^2,
	\end{equation}
	for all  $u \in M_{\infty}$. We then define 
	\begin{equation}
	\bar{u}=||u_0||^{\frac{2}{p-2}}_A u_0.
	\end{equation}
	Note that $ \bar{u} \in M_{\infty}$. So,	
	\begin{eqnarray}
	\nonumber J_{\infty}(\bar{u}) & = & \frac{1}{2} ||u_0||^{\frac{4}{p-2}}_A ||u_0||_A^2-\frac{1}{p}||u_0||^{\frac{2p}{p-2}}_p\\
	\nonumber	& =& \left( \frac{1}{2}- \frac{1}{p} \right)||u_0||^{\frac{2p}{p-2}}_A\\
	\nonumber	& \leq & \left( \frac{1}{2}- \frac{1}{p} \right)||u||^2_A = J_{\infty} (u),
	\end{eqnarray}
	for all $u \in M_{\infty}$. We conclude as soon as that infimum of $J_{\infty}(u)$ is attained in $M_{\infty}$ by $\bar{u}$, that is, $ m_{\infty}= J_{\infty}(\bar{u})$.
\end{proof}	

From these considerations we will show the following result that gives us a description of a (PS) sequence of $J_{\lambda,\mu}$.
\begin{lemma}\label{lemadecompacidade}
	Consider $\mu\geq 0$ and $\lambda>0$ such that $\lambda^{p-2}(1+\mu||b_-||_{\infty})^{2-q} < \Upsilon_0$. Let $\{u_n\}\subset \HA$ be a sequence satisfying $J_{\lambda,\mu}(u_n)=\beta +o_n(1)$ with $\beta <m^+_{\lambda,\mu}+ m_{\infty}$ and $J'_{\lambda,\mu}(u_n)= o_n(1)$ in $H^{-1}_A$ as $n\rightarrow \infty$, then there is a subsequence $\{u_n\}$ and $u_0 \in \HA$, with $u_0$ non-zero, such that $u_n=u_0+o_n(1)$ strong in $\HA$ and $J_{\lambda,\mu}(u_0)=\beta $. Also, $u_0$ is a solution of $\Plm$.
	
\end{lemma}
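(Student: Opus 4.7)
The plan is to run a standard global-compactness / Brezis--Lieb splitting argument adapted to the magnetic setting, comparing $\jf$ with the autonomous limit functional $J_\infty$ from $(P_A)$.

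\textbf{Boundedness and weak limit.} Combining $\jf(u_n)=\beta+o_n(1)$ with $\langle \jf'(u_n),u_n\rangle=o_n(\|u_n\|_A)$ kills the $b_\mu$-term and yields
$$\left(\tfrac12-\tfrac1p\right)\|u_n\|_A^2-\left(\tfrac1q-\tfrac1p\right)\int_{\R^N}a_\lambda|u_n|^q\,dx=\beta+o_n(1)+o_n(\|u_n\|_A).$$
Since $a\in L^{q'}$ and $\HA\hookrightarrow L^p(\R^N)$ via the diamagnetic inequality, H\"older gives $\int_{\R^N}a_\lambda|u_n|^q\leq C\|u_n\|_A^q$ with $q<2$, so $\{u_n\}$ is bounded. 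Passing to a subsequence, $u_n\rightharpoonup u_0$ in $\HA$, $u_n\to u_0$ in $L^r_{\mathrm{loc}}(\R^N)$ for $2\le r<2^*$ and pointwise a.e. Local compactness then allows passage to the limit in $\langle \jf'(u_n),\varphi\rangle\to 0$ for $\varphi\in C_0^\infty(\R^N,\C)$, so $\jf'(u_0)=0$.

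\textbf{Splitting onto the autonomous problem.} Set $v_n:=u_n-u_0$, so $v_n\rightharpoonup 0$ in $\HA$. Brezis--Lieb applied to $\|\cdot\|_A^2$ and to $\int|\cdot|^p$ separates the autonomous principal parts. For the non-autonomous terms one shows
$$\int_{\R^N}a_\lambda|v_n|^q\,dx\to 0, \qquad \int_{\R^N}(b_\mu-1)|v_n|^p\,dx\to 0,$$
using $(A)$, $(B_1)$, $(B_2)$: tails outside large balls are uniformly small by H\"older combined with the exponential decay of $a_-$, of $b_1-1$, and of $b_2$, while local parts vanish by $L^r_{\mathrm{loc}}$ compactness of $v_n$. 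The same limits, tested against any $\varphi$ with $\|\varphi\|_A\le 1$, transfer the cancellation to the derivative, so
$$\jf(u_n)=\jf(u_0)+J_\infty(v_n)+o_n(1),\qquad J'_\infty(v_n)\to 0\text{ in } H^{-1}_A,$$
i.e., $\{v_n\}$ is a PS sequence for $J_\infty$ at level $\beta-\jf(u_0)$.

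\textbf{Non-vanishing and conclusion.} Suppose for contradiction $\|v_n\|_A\not\to 0$. Pairing $J'_\infty(v_n)$ with $v_n$ yields $\|v_n\|_A^2=\int|v_n|^p+o_n(1)$, ruling out $L^p$-vanishing; a Lions concentration-compactness lemma then provides $y_n\in\R^N$ with $|y_n|\to\infty$ and $w\neq 0$ such that $v_n(\cdot+y_n)\rightharpoonup w$ and $J'_\infty(w)=0$. Hence $w\in M_\infty$ and $J_\infty(w)\ge m_\infty$ by Lemma \ref{lema.m.inf}; a further Brezis--Lieb split on $v_n(\cdot+y_n)$ yields $\beta-\jf(u_0)\ge m_\infty$. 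If $u_0\neq 0$, then $u_0\in M_{\lambda,\mu}=M^+_{\lambda,\mu}\cup M^-_{\lambda,\mu}$, so by Lemma \ref{teorema3.1}(ii) $\jf(u_0)\ge m^+_{\lambda,\mu}$, giving $\beta\ge m^+_{\lambda,\mu}+m_\infty$, contradicting the hypothesis; if $u_0=0$, then $\beta\ge m_\infty>m^+_{\lambda,\mu}+m_\infty$ (since $m^+_{\lambda,\mu}<0$), again a contradiction. Therefore $v_n\to 0$ strongly, $u_n\to u_0$ in $\HA$, and $\jf(u_0)=\beta$; applying the same dichotomy directly to $\{u_n\}$ in the strict regime $\beta<m^+_{\lambda,\mu}+m_\infty$ excludes $u_0=0$.

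\textbf{Main obstacle.} The delicate step is the splitting: securing uniform smallness of the non-autonomous remainders in the \emph{dual} norm of $\HA$, so that $\jf'(u_n)-\jf'(u_0)-J'_\infty(v_n)\to 0$ in $H^{-1}_A$. This rests on careful H\"older estimates in dual exponents, the embedding $\HA\hookrightarrow L^r(\R^N)$, and the precise exponential decay supplied by $(A)$, $(B_1)$, and $(B_2)$, which is exactly what makes the non-autonomous perturbation asymptotically invisible to weakly vanishing sequences.
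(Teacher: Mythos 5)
Your proposal is correct and follows essentially the same route as the paper: boundedness, weak limit $u_0$ with $\jf'(u_0)=0$, a Brezis--Lieb splitting $\jf(u_n)=\jf(u_0)+J_{\infty}(v_n)+o_n(1)$ using the decay in $(A)$, $(B_1)$, $(B_2)$ to absorb the non-autonomous terms, and then the vanishing/non-vanishing dichotomy with the energy comparison against $m_{\infty}$ and $m^{+}_{\lambda,\mu}$. The only variation is in the non-vanishing case, where you invoke Lions' lemma to extract a translated nontrivial critical point $w$ of $J_{\infty}$, while the paper instead projects $v_n$ onto the Nehari set $M_{\infty}$ and shows the projection parameter $t_n\to 1$; both yield the same lower bound $J_{\infty}(v_n)\ge m_{\infty}+o_n(1)$, and your explicit treatment of the subcase $u_0=0$ there (using $m^{+}_{\lambda,\mu}<0$) is if anything cleaner than the paper's.
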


\begin{proof}
By $(A), (B_1)$ and $ (B_2)$, we obtain by a standard argument that $\{u_n\}$ is bounded  in $\HA$. Then there is a subsequence $\{u_n\}$ and $u_0 \in \HA$ such that $u_n  \rightharpoonup u_0$ weak in $\HA$ as $n\rightarrow \infty$. Taking $v_n = u_n-  u_0$, we have $v_n \rightharpoonup 0$ weak in $\HA$ as $n\rightarrow \infty$. 
	
	Denoting by $B(0,1)$ the ball centered on the origin of radius 1, we have in $ B (0,1) $ the strong convergence
	$$ \int_{B(0,1)} |u_n|^q \rightarrow \int_{B(0,1)} |u_0|^q.$$
	By the Dominated Convergence Theorem we obtain
	$$\int_{B(0,1)}a_{\lambda} ||u_n|^q-|u_0|^q| \rightarrow 0,\;\; \mbox{as}\;\;n\rightarrow \infty.$$
 	Then, by H\"{o}lder and the integrability of $a_{\lambda}$ follows
	{\small
		\begin{eqnarray*}
			\left|\int a_{\lambda} (x)(|u_n|^q-|u_0|^q)\right|&\leq& o_n(1) + \int_{ B^c(0,1)}a_{\lambda}(x)||u_n|^q-|u_0|^q|\\
			&\leq &  o_n(1) +\left( \int_{ B^c(0,1)}a_{\lambda}(x) ^{q^*}\right)^{\frac{1}{q^*}}( ||u_n||^q_p+||u_0||_p^q)\\
			&\leq& o_n(1)+ \epsilon C .
	\end{eqnarray*}}	
	As $\epsilon >0$ it is arbitrary, we have
	$$ \int a_{\lambda} (x)(|u_n|^q-|u_0|^q)=o_n(1).$$
	
	On the other hand, $(B_1)$ and $(B_2)$ and by Brezis-Lieb lemma (see \cite{willen}), we can conclude that $\mu\int b_2(x) |v_n|^p=o_n(1)$, $\int (1-b_1(x))|v_n|^p=o_n(1)$ and $\int b_{\mu}(x) (|u_n|^p - |v_n|^p-|u_0|^p)=o_n(1)$, which together with the above inequality gives us
	$$  J_{\lambda , \mu}(u_n)= J_{\infty}(v_n)+ J_{\lambda , \mu}(u_0) +o_n(1). $$
	In a similar way we obtain that $ J'_{\infty}(v_n)v_n= J'_{\lambda , \mu}(u_n)u_n- J'_{\lambda , \mu}(u_0)u_0 +o_n(1) $. By hypothesis $J'_{\lambda , \mu}(u_n) \rightarrow 0 $ strong in $\HA^{-1}$ and $u_n \rightharpoonup u_0$ weak in $\HA$ as $n \rightarrow \infty$ and so we have $J'_{\lambda , \mu}(u_0)=0$.
	
	Now, define $\delta= \limsup_{n\rightarrow \infty} \sup_{y\in \R^N} \int_{B(y,1)}|v_n|^p.$ So we have two cases:
	
	\begin{description}
		\item[ $(i)$  ] $ \delta >0$, or 
		\item[ $(ii)$  ] $ \delta = 0$. 
	\end{description}
 	Suppose that $(i)$ happen. Then there will be a sequence $\{y_n\}\subset \R^N$ such that $ \int_{B(y_n,1)} |v_n|^p\geq \frac{\delta}{2}$ and for all $n \in \N$.
	Define $\tilde{v}_n (x)=v_n(x+y_n)$. We have that $\{\tilde{v}_n  \}$ is bounded and $\tilde{v}_n  \rightharpoonup v$ weak and almost always. Making a change of variables we obtain
	$$ \int_{B(0,1)}|\tilde{v}_n |^p \geq \frac{\delta }{4}.$$
	By Sobolev embedding 
	\begin{equation}\label{v.maior.q.zero} 
	\int_{B(0,1)}|v |^p \geq \frac{\delta }{4},
	\end{equation}
    giving us $v\neq 0$. But, $ v_n \rightharpoonup 0$  
	\begin{equation} \label{vn}
	\int_{\R^N}| v_n|^p \geq\int_{B(y_n,1)}|v_n|^p \geq \frac{\delta }{2}>0.
	\end{equation} 
	See that
	$$J_{\infty}(v_n)=\frac{1}{2}\int(|\nabla_Av_n|^2+v_n^2)dx-\frac{1}{p}\int|v_n|^p dx.$$
	Like this,
	$$F_{v_n}(t)=J_{\infty}(tv_n)=\frac{t^2}{2}||v_n||_A^2-\frac{t^p}{p}||v_n||^p.$$
	For each $n \in \N$, we can get $t_n$ such that $t_nv_n \in M_{\infty}$. So we build a sequence $\{t_n\}\subset \R^N$ with $t_n\rightarrow t_0$ as $n\rightarrow \infty$, such that $t_nv_n \in M_{\infty}$, that is, such that $J'_{\infty}(t_nv_n)t_nv_n=0$. 
	See also that
	$$J'_{\infty}(v_n)v_n=||v_n||_A^2-||v_n||^p=o_n(1)$$
	and	
	\begin{equation}\label{a.i} F'_{v_n}(t)=J'_{\infty}(tv_n)v_n=t||v_n||_A^2-t^{p-1}||v_n||^p=o_n(1).
	\end{equation} 
	Therefore we have 
	\begin{equation}\label{vai.p.zero} 
	(t_n-t_n^{p-1})||v_n||_A^2=t_n(1-t_n^{p-2})||v_n||_A^2=o_n(1). 
	\end{equation}
	By (\ref{v.maior.q.zero}) we know that $||v_n||^2_A\nrightarrow 0$. Also note that
	$t_n^{2-p}=\frac{\int|v_n|^{p}}{||v_n||_A^2}\geq \frac{\delta}{2c}  .$
	With that and by (\ref{vai.p.zero}) we get that 
	$(1-t_n^{p-2})\rightarrow 0 $, giving us that $t_n\rightarrow 1.$
	Now, see that $v_n \rightharpoonup 0$ weak in $\HA$ as $n\rightarrow \infty$. With this and by the fact $t_n \rightarrow 1$, we can conclude that
	$$ J_{\lambda, \mu}(u_n)=J_{\infty}(t_nv_n) +J_{\lambda, \mu}(u_0)+ o_n(1)\geq m_{\infty}+J_{\lambda, \mu}(u_0) .   $$
	Note that by hypothesis $ J_{\lambda, \mu}(u_n)=c+ o_n(1)$ with $c<m_{\infty} +m_{\lambda, \mu}^+$. By this we obtain
	$$c+ o_n(1)= J_{\lambda, \mu}(u_n)=J_{\infty}(t_n v_n) +J_{\lambda, \mu}(u_0)+ o_n(1)\geq m_{\infty}+J_{\lambda, \mu}(u_0)  ,$$
	giving us
	$$m_{\infty}+J_{\lambda, \mu}(u_0)  \leq  c+ o_n(1) <m_{\infty} +m_{\lambda, \mu}^+ + o_n(1),$$
	whence
	\begin{equation}\label{estm^+.}
	J_{\lambda, \mu}(u_0)  \leq m_{\lambda, \mu}^+ + o_n(1).
	\end{equation}
	
	We have already seen that $J'_{\lambda, \mu}(u_n)$ converges strongly to zero, whence we get $J'_{\lambda, \mu}(u_0)=0$. Thus $u_0 \in M_{\lambda, \mu}.$ Still, by Lemma \ref{N0}, $M^0_{\lambda, \mu} =  \emptyset$ and by Lemma \ref{teorema3.1} $m^+>0 $ and $m^-<0.$ Then,
	$$J_{\lambda, \mu}(u_0)\geq \inf_{M_{\lambda, \mu}}J_{\lambda, \mu}(u) = \inf_{M^+_{\lambda, \mu}}J_{\lambda, \mu}(u)=m^+,$$
	\noindent which contradicts what we have concluded in (\ref{estm^+.}).
	
	We conclude that ($ii$) occurs. In this case, $\{v_n\}$ such that $\int|v_n|^p\rightarrow 0$ if $n\rightarrow \infty$.
	As we already have $J'_{\infty}(v_n)v_n=o_n(1)$ with $J'_{\infty}(v_n)v_n=||v_n||_A^2-||v_n||_p^p$ and $\int|v_n|^p\rightarrow 0$, we conclude that $||v_n||^2\rightarrow 0$ giving us $u_n \rightarrow u_0$ strong in $\HA$. See also that $u_0 \neq 0$. In fact, note that if $u_0=0$ so $\tilde{v}_n =v_n=u_n$ and $ \int_{B(0,1)}|u_n|^p \geq \frac{\delta }{4}$, which we have already seen to be absurd.

\end{proof}

We can show that $M^{\pm}_{\lambda,\mu} $ is a manifold $C^1$ for appropriate $ \lambda $ values, as well as argued in \cite{edcarlos}. From this and the result we have just demonstrated, we are ready to address the existence of the first solution to the problem $\Plm$ in $ M^+_{\lambda,\mu}$.

\begin{prop}\label{teo3.3}
Assume that the conditions $ (A), (B_1) $ and $ (B_2) $ are satisfied with $ \lambda> 0 $ and $ \mu \geq 0 $ satisfying $ \lambda^{p-2} (1+\mu||b_2||_{\infty})^{2-q}< \Upsilon_0$, then exists $u^+_{\lambda,\mu} \in M^+_{\lambda,\mu}$ such that $J_{\lambda,\mu}(u^+_{\lambda,\mu})=m^+_{\lambda,\mu}$ and $J'_{\lambda,\mu}(u^+_{\lambda,\mu})=0$ in $H^{-1}_A$ and also $||u^+_{\lambda,\mu}||\rightarrow 0 $ as $\lambda \rightarrow 0$.
\end{prop}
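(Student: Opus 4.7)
The plan is to apply Ekeland's variational principle on the $C^1$-manifold $M^+_{\lambda,\mu}$, promote the resulting minimizing sequence to a genuine Palais--Smale sequence in $\HA$ by a Lagrange-multiplier argument, invoke the local compactness Lemma \ref{lemadecompacidade} to pass to a nonzero strong limit, and finally verify that this limit lies in $M^+_{\lambda,\mu}$ and satisfies the norm decay as $\lambda\to 0$.

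By Lemma \ref{bounded}, $J_{\lambda,\mu}$ is bounded from below on $M_{\lambda,\mu}$, so $m^+_{\lambda,\mu}$ is finite; by Lemma \ref{teorema3.1}(ii) it is moreover strictly negative. Applying Ekeland's principle on $M^+_{\lambda,\mu}$ (a $C^1$-manifold under the given restriction on $\lambda,\mu$) yields $\{u_n\}\subset M^+_{\lambda,\mu}$ with $J_{\lambda,\mu}(u_n)\to m^+_{\lambda,\mu}$ and
\begin{equation*}
J'_{\lambda,\mu}(u_n) - \sigma_n\,\Phi'(u_n) \to 0 \quad \text{in}\quad H^{-1}_A,
\end{equation*}
where $\Phi(u)=\langle J'_{\lambda,\mu}(u),u\rangle$ is the defining function of $M_{\lambda,\mu}$. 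Testing on $u_n$ and using $\Phi(u_n)=0$ gives $\sigma_n\langle\Phi'(u_n),u_n\rangle\to 0$. A direct computation shows $\langle\Phi'(u_n),u_n\rangle=F''_{u_n}(1)$, which is bounded away from zero on $M^+_{\lambda,\mu}$ as a consequence of Lemma \ref{N0} (the emptiness of $M^0_{\lambda,\mu}$ under the strict inequality $\lambda^{p-2}(1+\mu\|b_2\|_\infty)^{2-q}<\Upsilon_0$). Hence $\sigma_n\to 0$, and $\{u_n\}$ is a genuine Palais--Smale sequence for $J_{\lambda,\mu}$ at level $m^+_{\lambda,\mu}$.

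Since $m_\infty>0$ by Lemma \ref{lema.m.inf}, the strict inequality $m^+_{\lambda,\mu}<m^+_{\lambda,\mu}+m_\infty$ is automatic, so Lemma \ref{lemadecompacidade} applies and produces, up to a subsequence, a nonzero limit $u^+_{\lambda,\mu}\in\HA$ with $u_n\to u^+_{\lambda,\mu}$ strongly, $J_{\lambda,\mu}(u^+_{\lambda,\mu})=m^+_{\lambda,\mu}$, and $J'_{\lambda,\mu}(u^+_{\lambda,\mu})=0$ in $H^{-1}_A$. In particular $u^+_{\lambda,\mu}\in M_{\lambda,\mu}$; since $M^0_{\lambda,\mu}=\emptyset$ (Lemma \ref{N0}) and $J_{\lambda,\mu}(u^+_{\lambda,\mu})=m^+_{\lambda,\mu}<0<m^-_{\lambda,\mu}$ (Lemma \ref{teorema3.1}), membership in $M^-_{\lambda,\mu}$ is excluded, leaving $u^+_{\lambda,\mu}\in M^+_{\lambda,\mu}$.

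For the norm decay, the inequality $F''_{u^+_{\lambda,\mu}}(1)>0$ from Lemma \ref{lema2.4} gives
\begin{equation*}
\|u^+_{\lambda,\mu}\|_A^2 \;<\; \frac{p-q}{p-2}\int_{\R^N}a_\lambda|u^+_{\lambda,\mu}|^q\,dx.
\end{equation*}
Since $a_-\leq 0$, only the $\lambda a_+$ part contributes to an upper bound; applying H\"older's inequality with exponents $q'=p/(p-q)$ and $p/q$ followed by the Sobolev embedding (constant $S_p$) yields $\|u^+_{\lambda,\mu}\|_A^{2-q}\leq C\lambda$, where $C$ depends only on $p,q,S_p,\|a_+\|_{q'}$ and is independent of $\lambda$ and $\mu$. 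Since $2-q>0$, this forces $\|u^+_{\lambda,\mu}\|_A\to 0$ as $\lambda\to 0$. The principal technical difficulty is the Lagrange-multiplier step, whose success hinges on the quantitative separation of $M^+_{\lambda,\mu}$ from $M^0_{\lambda,\mu}$ provided by the strict hypothesis $\lambda^{p-2}(1+\mu\|b_2\|_\infty)^{2-q}<\Upsilon_0$.
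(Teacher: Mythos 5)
Your proof is correct and follows essentially the same route as the paper's: Ekeland's principle on the Nehari manifold, the local compactness lemma applied at the level $m^+_{\lambda,\mu}<m^+_{\lambda,\mu}+m_{\infty}$, and the estimate $\|u\|_A^{2-q}\leq C\lambda$ derived from $F''_u(1)>0$ for the norm decay. You merely make explicit the steps the paper compresses (the Lagrange-multiplier argument behind ``a consequence of the Ekeland Variational Principle'' and the exclusion of $M^-_{\lambda,\mu}$ via $m^+_{\lambda,\mu}<0<m^-_{\lambda,\mu}$).
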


\begin{proof}
	By Lemmas \ref{N0} and \ref{bounded}, we can apply a consequence of the Ekland Variational Principle
	, to get a sequence $\{u_n\} \subset M^+_{\lambda,\mu}$ satisfying $J_{\lambda,\mu}(u_n)\rightarrow\inf_{u \in M_{\lambda,\mu}}J_{\lambda,\mu}(u)=m^+_{\lambda,\mu}$ and $J'_{\lambda,\mu}(u_n)= o_n(1)$ strong in $H^{-1}_A$ as $n\rightarrow \infty$.
	
	By Lemma \ref{lemadecompacidade}, there is a subsequence ${u_n}$ and $u^+_{\lambda,\mu} \in M^+_{\lambda,\mu}$ such that $u_n= u^+_{\lambda,\mu}+o_n(1)$ strong in $\HA$ as $n\rightarrow  \infty$. Follow that $J_{\lambda,\mu}(u^+_{\lambda,\mu})=m^+_{\lambda,\mu}$ and $J'_{\lambda,\mu}(u_n)=0$ in $H^{-1}_A$. Also, by (\ref{u.em.nmais}) and $u^+_{\lambda,\mu}\in M^+_{\lambda,\mu}$, we get that $||u^+_{\lambda,\mu}||\rightarrow 0 $ as $\lambda \rightarrow 0$.
	
\end{proof}

\section{Behavior of the first solution of $\Plm$ }

We can see that the solutions of $ \Plm $ vary according to the $ \mu $ and $ \lambda $ parameters. In Proposition \ref{teo3.3} deal with the behavior of $u^+_{\lambda, \mu}$ as $\lambda \rightarrow 0$. Now we will see what happens with $u^+_{\lambda, \mu}$ as $\mu \rightarrow \infty$. 

\begin{prop}\label{comportamento.da.primeira.sol}
	For each $\lambda >0$ such that $\lambda^{p-2}(1+\mu||b_2||_{\infty})^{2-q}<(\frac{q}{2})^{p-2}\Upsilon_0,$ and sequence $\{\mu_n\}\subset (0,\infty)$ with $\mu_n\rightarrow \infty $ as $n\rightarrow \infty $, there is a subsequence ${\mu_n}$ such that $u_{\lambda,\mu_n}^+ \rightarrow 0$ strong in $H^1_A(\R^N)$ as $n\rightarrow \infty$.
\end{prop}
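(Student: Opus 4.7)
The plan is to exploit the structural asymmetry in $b_\mu = b_1 + \mu b_2$ to show that the minimizer's mass is forced to disperse as $\mu_n \to \infty$. First, because $\lambda^{p-2}(1+\mu_n\|b_2\|_\infty)^{2-q} < (q/2)^{p-2}\Upsilon_0 < \Upsilon_0$, the a priori bound (\ref{u.em.nmais}) applied to $u^+_{\lambda,\mu_n} \in M^+_{\lambda,\mu_n}$ yields $\|u^+_{\lambda,\mu_n}\|_A \leq C(\lambda)$, independent of $n$. Second, Lemma \ref{lema2.4}(i) guarantees $\int a_\lambda|u^+_{\lambda,\mu_n}|^q > 0$ on $M^+_{\lambda,\mu_n}$, so the Nehari identity
\begin{equation*}
\|u^+_{\lambda,\mu_n}\|_A^2 = \int a_\lambda|u^+_{\lambda,\mu_n}|^q + \int b_{\mu_n}|u^+_{\lambda,\mu_n}|^p
\end{equation*}
forces $\int b_{\mu_n}|u^+_{\lambda,\mu_n}|^p \leq \|u^+_{\lambda,\mu_n}\|_A^2 \leq C$. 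Since $b_{\mu_n} \geq \mu_n b_2$ pointwise, this gives $\int b_2|u^+_{\lambda,\mu_n}|^p \leq C/\mu_n \to 0$.

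Next, using the lower bound $b_2(x) \geq d_0 e^{-r_{b_2}|x|}$ from $(B_2)$, on any ball $B_R = B(0,R)$ we have $b_2 \geq d_0 e^{-r_{b_2}R}$, which transforms the previous vanishing into local $L^p$-decay: for every fixed $R > 0$,
\begin{equation*}
\int_{B_R}|u^+_{\lambda,\mu_n}|^p \leq \frac{e^{r_{b_2}R}}{d_0}\int b_2|u^+_{\lambda,\mu_n}|^p \longrightarrow 0.
\end{equation*}
Given $\varepsilon > 0$, choose $R$ so large that $\|a_\lambda\|_{L^{q'}(B_R^c)} < \varepsilon$ (possible since $a \in L^{q'}$); then Hölder's inequality yields
\begin{equation*}
\int|a_\lambda|\,|u^+_{\lambda,\mu_n}|^q \leq \|a_\lambda\|_{L^{q'}(B_R)}\|u^+_{\lambda,\mu_n}\|_{L^p(B_R)}^q + \varepsilon\,\|u^+_{\lambda,\mu_n}\|_p^q,
\end{equation*}
where the second term is $O(\varepsilon)$ (using $\|u^+_{\lambda,\mu_n}\|_p \leq S_p^{-1/2}\|u^+_{\lambda,\mu_n}\|_A \leq C$) and the first tends to $0$ at fixed $R$. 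Hence $\int a_\lambda|u^+_{\lambda,\mu_n}|^q \to 0$.

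Finally, since $m^+_{\lambda,\mu_n} < 0$ by Lemma \ref{teorema3.1}(ii), substituting $\int b_{\mu_n}|u^+_{\lambda,\mu_n}|^p = \|u^+_{\lambda,\mu_n}\|_A^2 - \int a_\lambda|u^+_{\lambda,\mu_n}|^q$ into $J_{\lambda,\mu_n}(u^+_{\lambda,\mu_n}) < 0$ yields
\begin{equation*}
\frac{p-2}{2p}\|u^+_{\lambda,\mu_n}\|_A^2 < \frac{p-q}{pq}\int a_\lambda|u^+_{\lambda,\mu_n}|^q \longrightarrow 0,
\end{equation*}
so $u^+_{\lambda,\mu_n} \to 0$ strongly in $\HA$. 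The main obstacle is the middle step: one must promote the $b_2$-weighted $L^p$ vanishing into vanishing of the $a_\lambda$-weighted $L^q$ integral, which requires both the locally uniform positivity of $b_2$ from $(B_2)$ and the global tail decay of $a$ via $a \in L^{q'}$, bridged by the cutoff at radius $R$.
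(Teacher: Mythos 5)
Your argument is correct, and it is in fact more informative than what the paper provides: the paper's ``proof'' is only a pointer to \cite[Proposition 7.1(ii)]{HWW}, whereas you give a complete, self-contained chain of estimates adapted to the magnetic setting. Each link holds up: the uniform bound $\|u^+_{\lambda,\mu_n}\|_A\le C(\lambda)$ follows from (\ref{u.em.nmais}) because the constant there does not depend on $\mu$; the Nehari identity together with Lemma \ref{lema2.4}(i) and $b_1>0$ does give $\mu_n\int b_2|u^+_{\lambda,\mu_n}|^p\le C$; the locally uniform positivity of $b_2$ from $(B_2)$ combined with the tail smallness of $a\in L^{q'}$ correctly upgrades this to $\int a_\lambda|u^+_{\lambda,\mu_n}|^q\to 0$ (your Hölder exponents $q'=p/(p-q)$ and $p/q$ are the right conjugate pair); and eliminating the $b_{\mu_n}$ term from $J_{\lambda,\mu_n}(u^+_{\lambda,\mu_n})=m^+_{\lambda,\mu_n}<0$ via the Nehari constraint yields exactly $\frac{p-2}{2p}\|u\|_A^2<\frac{p-q}{pq}\int a_\lambda|u|^q$. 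Note that your argument actually gives convergence of the \emph{whole} sequence, not merely a subsequence, which is stronger than the stated conclusion.

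One caveat worth recording, though it concerns the statement rather than your proof: since $2-q>0$, the quantity $\lambda^{p-2}(1+\mu_n\|b_2\|_\infty)^{2-q}$ tends to $+\infty$ as $\mu_n\to\infty$ for fixed $\lambda>0$, so the hypothesis \eqref{des.do.teo}-type inequality (which is what guarantees that $u^+_{\lambda,\mu_n}$ exists via Proposition \ref{teo3.3}) eventually fails along the sequence unless it is understood to be imposed for each $n$. Your proof is exactly the right one under the only sensible reading, namely that the inequality holds for the pair $(\lambda,\mu_n)$ for every $n$, since every estimate you use is available precisely under that condition and is uniform in $n$.
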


\begin{proof}
	The proof is similar to what was done in \cite[Proposition 7.1(ii)]{HWW}.

\end{proof}
 
 \section{The second solution}

To treat the existence of the second $ \Plm $ solution, we need to make some considerations. Note that equation
$$- \Delta_A u + u = a_{\lambda}(x) |u|^{q-2}u+b_{\mu}(x) |u|^{p-2}u  \;\;\;\;\;\; \Plm $$
is such that $a_{\lambda}(x) \rightarrow 0 $ and $ b_{\mu}(x) \rightarrow 1$ as $|x| \rightarrow \infty$. Adding the hypothesis of $ A \rightarrow d$ with $d$ constant as $|x| \rightarrow \infty$, the problem  $\Plm$ converges at infinity for the problem
$$- \Delta_d u + u = |u|^{p-2}u.  \;\;\;\;\;\;(P_{\infty}), $$
where $-\Delta_d=(-i\nabla+d)^2$.

Thus, by a result of Ding and Liu \cite[Lemma 2.5]{DL}, $u$ is a $(P_{\infty})$ solution if and only if $v(x):=|u(x)| \in H^1$ it is a solution to the problem
$$- \Delta v + v = v^{p-1}; \;\;v>0 .  \;\;\;\;\;\; (E_{\infty}) $$
Moreover, the equations $(P_{\infty})$ and $ (E_{\infty}) $ have the same energy level, that is
$$ J_{\infty} (u )  = I_{\infty} (v ) = m_{\infty};$$
on what $ J_{\infty}$ and $I_{\infty}$ are the respective functional ones associated with the previous problems. Acording Berestycki, Lions \cite{BerLions} or Kwong \cite{KWONG}, the equation $(E_{\infty}) $ has a single solution $z_0$ symmetrical, positive and radial. By \cite[Theorem 2]{GNN}, for all  $\epsilon >0,$ exists $A_{\epsilon}, B_0$ and $C_{\epsilon}$ positive such that
\begin{equation}\label{exp1}
A_{\epsilon} \exp (-(1+ \epsilon)|x|)\leq z_0(x) \leq B_0 \exp (-|x|)
\end{equation}
and
\begin{equation}\label{exp2}
|\nabla z_0(x)| \leq C_{\epsilon} \exp (-(1- \epsilon)|x|).
\end{equation}

According Kurata \cite[Lemma 4]{Kurata}, defining $w_0=z_0 e^{-idx}$ we have $w_0$ is a solution of $ (P_{\infty}) $ , single, symmetrical, positive and radial. So we will have $ J_{\infty} (w_0 )  =m_{\infty}$. See also that $z_0=|w_0|$, which together with (\ref{exp1}) gives us the following inequalities
\begin{equation}\label{exp1w}
A_{\epsilon} \exp (-(1+ \epsilon)|x|)\leq |w_0(x)| \leq B_0 \exp (-|x|)
\end{equation}
and
\begin{equation}\label{exp2w}
|\nabla w_0(x)| \leq C_{\epsilon} \exp (-(1- \epsilon)|x|).
\end{equation}

Next, we will make some estimates about the minimum energy levels in the Nehari Manifold to prove the existence of a second solution. In order not to overload the notation, we will denote $ u_{\lambda,\mu}^+:= u^+$. Considering $ J(u^+) =m^+$, $ m^- = \inf_{u  \in M^-_{\lambda,\mu }}J_{\lambda,\mu }(u)$ and $ m_{\infty} = \inf_{u  \in M_{\infty}}J_{\infty}(u)= J_{\infty} (w_0 )$, we will make the following estimate for such energy levels.

\begin{prop}\label{prop4.1}
	For all  $\lambda>0$ and $\mu> 0$ satisfying $ \lambda^{p-2} (1+\mu||b_2||_{\infty})^{2-q}< \Upsilon_0$, we have $m^- < m^+ + m^{\infty}$.
\end{prop}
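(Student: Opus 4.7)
The aim is to produce a test function in $M^-_{\lambda,\mu}$ whose energy is strictly below $m^+ + m_\infty$. Let $\uma$ be the minimiser furnished by Proposition \ref{teo3.3}, and for $y\in\R^N$ with $|y|$ large put $w_y(x):=w_0(x-y)$. I would consider the one-parameter family $s\mapsto s(\uma+w_y)$, $s>0$. By Lemma \ref{Nehari}(ii) there is a unique $s(y)>0$ with $s(y)(\uma+w_y)\in M^-_{\lambda,\mu}$, and
\[
\jf\bigl(s(y)(\uma+w_y)\bigr)=\max_{s>0}\jf(s(\uma+w_y)).
\]
It therefore suffices to prove that the right-hand side is strictly less than $m^++m_\infty$ for some choice of $y$ with $|y|$ sufficiently large.

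\textbf{Decomposition.} Expanding the magnetic norm and splitting $b_\mu=1+(b_1-1)+\mu b_2$, I would write
\[
\jf(s(\uma+w_y))=\jf(s\uma)+J_\infty(sw_y)+\mathcal E(s,y),
\]
where $\mathcal E(s,y)$ collects (a) the interaction pieces $\mathrm{Re}\langle \uma,w_y\rangle_A$ together with the cross contributions from the nonlinear integrals, and (b) the weight defects $-\frac{s^q}{q}\int a_\lambda|w_y|^q$, $+\frac{s^p}{p}\int(1-b_1)|w_y|^p$ and $-\frac{s^p\mu}{p}\int b_2|w_y|^p$. The exponential decay of $\uma$ (granted by the regularity/decay theorems stated in the introduction) together with \eqref{exp1w}--\eqref{exp2w} forces every interaction term in (a) to be exponentially small in $|y|$. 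By $(A)$, $\bigl|\int a_-|w_y|^q\bigr|=O\bigl(e^{-r_{a_-}|y|}\bigr)$; the $\lambda a_+$-contribution decays to zero at an exponential rate upon splitting the integral at $\{|x|=|y|/2\}$ and invoking $a_+\in L^{q'}$ together with the pointwise bound on $w_y$; and by $(B_1)$, $\int(1-b_1)|w_y|^p=O\bigl(e^{-r_{b_1}|y|}\bigr)$.

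\textbf{Decisive gain.} Fix $\epsilon>0$ small enough so that $r_{b_2}<p(1+\epsilon)$, which is possible since $(B_2)$ gives $r_{b_2}<q<p$. Using $(B_2)$ and the lower bound in \eqref{exp1w},
\[
\mu\int b_2|w_y|^p\;\ge\;d_0\mu A_\epsilon^p\int_{\R^N}e^{-r_{b_2}|x|}\,e^{-p(1+\epsilon)|x-y|}\,dx\;\ge\;C\mu\,e^{-r_{b_2}|y|}
\]
for some $C>0$ and all $|y|$ large. Hence the weight defect $-\frac{s^p\mu}{p}\int b_2|w_y|^p$ is genuinely negative, of exact order $e^{-r_{b_2}|y|}$. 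Because $(B_2)$ imposes $r_{b_2}<\min\{r_{a_-},r_{b_1},q\}$, this negative contribution strictly dominates every other piece of $\mathcal E(s,y)$ uniformly for $s$ in any bounded interval where the maximiser can sit; thus $\mathcal E(s(y),y)<0$ for $|y|$ large. Combined with $J_\infty(s(y)w_y)\to m_\infty$ and $\jf(s(y)\uma)\to\jf(\uma)=m^+$ (once $s(y)\to 1$, discussed below) this yields $\jf(s(y)(\uma+w_y))<m^++m_\infty$, hence $m^-<m^++m_\infty$.

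\textbf{Main obstacle.} The subtle point is controlling the maximiser $s(y)$, because a priori $\sup_{s>0}\jf(s\uma)$ equals $\jf(t^-(\uma)\uma)$ rather than $\jf(\uma)=m^+$, so one cannot naively bound the two profile sums by $m^++m_\infty$. The resolution is that for $|y|$ large the fibering map $s\mapsto\jf(s(\uma+w_y))$ converges, in the region where its maximum is attained, to the limiting fibering $s\mapsto J_\infty(sw_0)$, whose unique critical point on $(0,\infty)$ is $s=1$ because $w_0\in M_\infty$; consequently $s(y)\to 1$ as $|y|\to\infty$, and both profile energies take their correct values. One must also check that $\uma+w_y$ falls in case $(II)$ of Lemma \ref{Nehari}, i.e.\ that $\int a_\lambda|\uma+w_y|^q>0$, so that $t^-$ is well defined; this is immediate, since the integral converges to $\int a_\lambda|\uma|^q>0$ as $|y|\to\infty$ by Lemma \ref{lema2.4}(i). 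Delivering these two ingredients rigorously (convergence $s(y)\to 1$ and uniformity of the dominating exponential) is the main work of the proof.
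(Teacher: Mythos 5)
Your overall architecture (a translated bump $w_y$, exponential bookkeeping, and the decisive negative term $-\frac{\mu s^p}{p}\int b_2|w_y|^p$ dominating because $r_{b_2}<\min\{r_{a_-},r_{b_1},q\}$) matches the paper's. The genuine gap lies in your choice of test family. You project $u^++w_y$ onto $M^-_{\lambda,\mu}$ along the ray $s\mapsto s(u^++w_y)$, so \emph{both} profiles are rescaled by the same $s$. Once the interaction terms are discarded, the relevant limit of your fibering map is $g(s)=\jf(su^+)+J_\infty(sw_0)$, not $s\mapsto J_\infty(sw_0)$ as you assert: the summand $\jf(su^+)$ does not vanish as $|y|\to\infty$. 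Now $s=1$ is a critical point of $g$ (both summands have vanishing derivative there), but it is a local \emph{minimum} of the first summand, since $u^+\in M^+_{\lambda,\mu}$ means $F''_{u^+}(1)>0$. One computes $g''(1)=F''_{u^+}(1)-(p-2)\|w_0\|_A^2$, and nothing in the hypothesis $\lambda^{p-2}(1+\mu\|b_2\|_{\infty})^{2-q}<\Upsilon_0$ forces this to be negative: the bound $F''_{u^+}(1)\le(2-q)\|u^+\|_A^2$ is only small when $\lambda$ is small, whereas the proposition is claimed on the whole admissible range. If $g''(1)>0$, the maximiser $\tau^-$ of $g$ lies strictly to the right of $1$ and $g(\tau^-)>m^++m_{\infty}$ by a fixed amount independent of $y$; your quantity $\max_{s>0}\jf(s(u^++w_y))$ then converges to something strictly above $m^++m_{\infty}$ and the estimate fails. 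Even when $g''(1)<0$, your justification of $s(y)\to 1$ has to be run against $g$, not against $J_\infty(s\,\cdot)$.

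The paper sidesteps this entirely by using the family $t\mapsto u^++tw_k$, in which only the bump is scaled. Since $u^+$ is a genuine critical point of $\jf$ in all of $\HA$ (Proposition \ref{teo3.3} together with Lemma \ref{minglobal}), the first-order cross term $J'_{\lambda,\mu}(u^+)(tw_k)$ vanishes identically, the $u^+$-contribution is frozen at exactly $m^+$, and the bump contributes $J_{\infty}(tw_k)\le \sup_{t>0}J_{\infty}(tw_0)=m_{\infty}$ for \emph{every} $t$; hence $\jf(u^++tw_k)\le m^++m_{\infty}+\tilde K$ uniformly in $t$, with $\tilde K<0$ for $k$ large by the same exponential comparison you carry out. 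The price of this family is that it is not a ray through the origin, so one cannot invoke $t^-$ directly; the paper instead shows that $M^-_{\lambda,\mu}$ separates $\HA$ into the components $U_1\ni u^+$ and $U_2\ni u^++t_0w_k$ and concludes by connectedness of the path that it must cross $M^-_{\lambda,\mu}$. If you wish to keep a ray-based projection you must first repair the profile decomposition; as written, the key inequality $\max_{s>0}\jf(s(u^++w_y))<m^++m_{\infty}$ is unproved and, for the larger admissible values of $\lambda$, likely false.
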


\begin{proof} Let $S^{N-1}=\{x \in \R^N; |x|=1 \}$ and consider $w_0(x) $ a critical point of $I_{\infty}(u)$ on what $I_{\infty}(w_0) = m^{\infty} $ and $m^{\infty}$ is the same as previously defined.
	Let $e \in S^{N-1}$ and define $w_k(x)= w_0(x+ke) $ for all  $k \in \N$.
	Our goal is to show that
	\begin{equation}\label{afirmacao1}
	\sup_{t>0}J(u^+ + tw_k) < m^+ + m^{\infty}, \;\; \mbox{ for }\;\; k \;\; \mbox{large enough. }
	\end{equation}
	Note that from the above statement, it will suffice to show that there is a certain $t^*$ such that $(u^+ + t^*w_k)\; \in \; M^-_{\lambda,\mu }$, to ensure the affirmation of Lemma.
	If $t \rightarrow 0$, then $ (u^+ + tw_k) \rightarrow u^+$ strong in $H^1(\R^N)$ and uniformly to $k \in \N$. Still, we have to $J(u^+)=m^+<0$ and how $m^+$ is infimun, it follows that for all  $\epsilon >0$, there will be a $ \underline{t}$, independent of $k$ such that $J(u^++t_kw_k)<m^++\epsilon$ for all  $t \in \lbrack 0, \underline{t})$ and for all  $k \in \N$, in particular, for $0<\epsilon<m_{\infty}$. Therefore $J(u^++tw_k)< m^+ + m^{\infty}$ for all  $t \in \lbrack 0, \underline{t})$ and for all  $k \in \N$. 
	
	On the other hand, $\frac{u^+}{t}+ w_k \rightarrow w_k$ strong in $ \HA$ and to uniformly $k\in \N$ as $t\rightarrow +\infty.$ This way, for all  $k\in \N,$ by  Lebesgue's dominated convergence theorem and conditions $(A)$, $(B_1)$ and $(B_2)$ we have
	\begin{eqnarray}
	J(u^++t_kw_k) &=& \frac{t^2}{2}(||w_0||^2+o_t(1))-\frac{t^q}{ q}\left( \int a_{\lambda}(x)|w_k|^q +o_t(1)\right) \\
	&&-  \frac{t^p}{ p}\left( \int b_1(x)|w_k|^q +o_t(1)\right) \\
	&&-  \frac{\mu t^p}{ p}\left( \int b_2(x)|w_k|^p +o_t(1)\right),
	\end{eqnarray}
	where $o_t(1)\rightarrow 0$ as $t\rightarrow +\infty$. We remember that $q<2<p$ and we can still observe that the last term of the above expression is always negative, so we need to analyze the behavior of $\int b_1(x)|w_k|^q$. Note that by condition $(B_1)$, $\int b_1(x)|w_k|^q\geq C>0$ for all  $k\in \N$ and some constant $ C $ independent of $ k $. Thus
	\begin{equation}\label{}
	J(u^+ + tw_k) \rightarrow  -\infty \;\; \mbox{ as } \; k \in \N \mbox{ as }\;\;t \rightarrow \infty.
	\end{equation}
	Then, for all  $\epsilon >0$, there will be $\overline{t}>\underline{t}$, independent of $k$ such that $J(u^+ + t_k w_k ) < m^+ + m^{\infty} $ for all  $ t > \overline{t}  $ and $k \in \N.$
	
	To complete the proof, it remains to show that
	\begin{equation}\label{paramostrar}
	\sup_{\underline{t}\leq t \leq \overline{t}}J(u^++tw_k)< m^+ + m^{\infty}, \;\;\; \mbox{for } k \mbox{ big enouth.}
	\end{equation}
	To prove this point note that since $ u^+ $ is a critical point of $ J (u) $, we have
	\begin{eqnarray*}
		J(u^++t_kw_k) &=& \frac{1}{2}||u^+||_A^2+ t Re\left( \int (\nabla_A u^+\overline{ \nabla_A w_k }+ u^+\overline{w_k})dx\right)+  \frac{1}{2}||tw_0 ||_A^2 \\
		&& -\frac{1}{q}\int a_{\lambda}(x) (|u^++tw_k|^q)  \\
		&&-  \frac{1}{p}\int b_1(x) |u^++tw_k|^p  - \frac{1}{p}\int\mu b_2(x) |u^++tw_k|^p \\
		&=& J(u^+) + I_{\infty}(tw_0)+J'(u^+)(tw_k)   +\frac{1}{q}\int a_{\lambda}(x) (|u^+|^q - |u^++ tw_k|^q +q|u^+|^{q-1}(tw_k)) \\
		&&+  \frac{1}{p}\int b_1(x) (|u^+|^p - |u^++ tw_k|^p +|tw_k |^p +p|u^+|^{p-1}(tw_k))  \\
		&&+   \frac{1}{p}\int(1- b_1(x))| tw_k|^p - \frac{\mu}{p}\int( b_2(x) |u^++tw_k|^p\\
		&&-   |u^+|^p -p|u^+|^{p-1}(tw_k))= m_+ + m_{\infty} + \tilde{K},
	\end{eqnarray*}
	on what $\tilde{K}=\Gamma_1+\Gamma_2+\Gamma_3+\Gamma_4$, whit
	$$ \Gamma_1= \frac{1}{q}\int a_{\lambda}(x) (|u^+|^q - |u^++ tw_k|^q +q|u^+|^{q-1}(tw_k)) ;$$
	$$ \Gamma_2= -\frac{1}{p}\int b_1(x) (|u^++ tw_k|^p-|u^+|^p  -|tw_k|^p -p|u^+|^{p-1}(tw_k)); $$
	$$ \Gamma_3=  \frac{1}{p}\int(1- b_1(x))| tw_k|^p;$$
	$$ \Gamma_4= - \frac{\mu}{p}\int b_2(x)( |u^++tw_k|^p- |u^+|^p -p|u^+|^{p-1}(tw_k)) . $$
	To show the statement (\ref{paramostrar}), we need to show that $\tilde{K}< 0$ for $k$ large enough to $t\in [\underline{t},\overline{t}]$. For this we will make some estimates about the values of each $ \Gamma_i $, for $ i = 1,2,3 $ and $ 4 $.

	We know that $(t+s)^p-t^p-s^p-pt^{p-1}s\geq 0$ for $s>0 $ and $t>0,$ then, we have
	$$(|u^+ tw_k|^p-|u^+|^p  -|tw_k|^p -p|u^+|^{p-1}(tw_k)) \geq 0.$$
	With this and because $ b_1> 0 $, we obtain
	\begin{eqnarray}\label{Gamma2}
	\Gamma_2 &=& -\frac{1}{p}\int b_1(x) (|u^+ tw_k|^p-|u^+|^p  -|tw_k|^p -p|u^+|^{p-1}(tw_k))\leq 0.
	\end{eqnarray}
	
	To analyze $\Gamma_3$ note that using $ (B_1)$ and (\ref{exp1w})
	\begin{eqnarray}\label{gamma3}
	\nonumber  \Gamma_3 &=&\frac{1}{p}\int_{\R^N}(1- b_1(x))| tw_k|^p\leq  c_0 B_0^p \int_{\R^N} \exp(-r_{b_1}|x|)\exp(-p|x+le|)dx\\
	\nonumber    &\leq & c_0 B_0^p \int_{|x|<l} \exp(-\min \{r_{b_1},p\}(|x|+|x+le|))dx\\
	\nonumber    && +  c_0 B_0^p \int_{|x|\geq l} \exp(-\min \{r_{b_1},p\}(|x|+|x+le|)) dx\\
	\nonumber    &\leq & c_0 B_0^p l^n \int_{|x|<l} \exp(-\min \{r_{b_1},p\}l)dx+  C_0 B_0^p \exp(-\min \{r_{b_1},p\}l)\\
	& &+  C_0 B_0^p l^n \exp(-\min \{r_{b_1},p\}l) \;\;\mbox{ for } \;\; l\geq 1.
	\end{eqnarray}
	
Now, using an argument from Gidas, Ni and Nirenberg \cite{GNN} and the expansion of Taylor, we have
	\begin{eqnarray} \label{gamma1.1} 
	\nonumber \Gamma_1&= &\frac{1}{q}\int a_{\lambda}(x) (|u^+|^q - |u^++ tw_k|^q +|u^+|^q +q|u^+|^{q-1}(tw_k)) \\
	\nonumber&= &\int- a_{\lambda}(x) \left(\int_{0}^{tw_k}|u^++\eta |^{q-1}d\eta-tw_k|u^+|^{q-1} \right).
	\end{eqnarray}
	Note that $\int_{0}^{tw_k}|u^++\eta |^{q-1}d\eta=-|u^+|^{q-1}\eta|_0^{tw_k}=-tw_k|u_0|^{q-1}$. Also, like $- a_{\lambda}(x)=-\lambda a_+(x)-a_-(x)\leq |a_-|$ we have
	\begin{eqnarray} \label{gamma1.1.1} 
	\nonumber \Gamma_1&\leq & \int\left(|a_-|\int_{0}^{tw_k} ((u^++\eta)^{q-1} - (u^+)^{q-1}) d \eta \right) \\
	&\leq & \frac{\overline{t}^q}{q}\int_{\R^N}  |a_-| w_k^q.
	\end{eqnarray}
	Thus, by the condition $ (A_2) $ and by the same argument used in (\ref{gamma3}) we obtain
	\begin{eqnarray}\label{gamma1}
	\nonumber \Gamma_1 &\leq & \frac{\overline{t}^q}{q}\int_{\R^N}  |a_-| w_k^q\\
	&\leq & \hat{c}B_0^q k^n \frac{\overline{t}^q}{q} \exp(-\min\{r_{a_-},q\}k)dx \;\;\mbox{ for } \;\; k\geq 1.
	\end{eqnarray}

Using arguments similar to those used in (\ref{gamma1.1}) with the conditions $ (B_1) $ and $ (B_2) $, by Gidas Ni Nirenberg \cite{GNN}, by the Taylor expansion and the fact that $ \mu> 0 $, we have
	\begin{eqnarray}\label{Gamma4.1}
	\nonumber\Gamma_4&= &-\frac{\mu}{p}\int b_2(x)( |u^+ +tw_k|^p- |u^+|^p -p|u^+|^{p-1}(tw_k)) \\
	& \geq &- \frac{\mu \bar{t}^p}{p} \int| b_2|w_k^p.
	\end{eqnarray}
	Using $(B_2)$ we have
	\small
	\begin{eqnarray}\label{Gamma4.2}
	\nonumber\int_{\R^N}| b_2|w_k^p& = &  \int_{\R^N}b_2(x-ke) w_0^p dx \geq \left(\min_{x\in B^N(0,1)}w_0^p(x)\right) \int_{B^N(0,1)}b_2(x-ke)   dx\\
	\nonumber	 & \geq &\left(\min_{x\in B^N(0,1)}w_0^p(x)\right)D_0 \exp(-r_{b_2}k).
	\end{eqnarray}
	\normalsize
	
	Now, by (\ref{Gamma2})-(\ref{Gamma4.2}) we get
	\begin{eqnarray*}
		\nonumber  J(u^++t_k w_k) &\leq &  m_+ + m_{\infty} +\Gamma_1+\Gamma_2+\Gamma_3+\Gamma_4\\
		\nonumber  &\leq &  m_+ + m_{\infty} +\hat{c}B_0^q k^n \frac{\overline{t}^q}{q} \exp(-\min\{r_{a_-},q\}k)dx\\
		\nonumber&+&C_0 B_0^p k^n \exp(-\min \{r_{b_1},p\}k)\\
		&-& \frac{\mu\bar{t}^p}{p}\left(\min_{x\in B^N(0,1)}w_0^p(x)\right)D_0 \exp(-r_{b_2}k).
	\end{eqnarray*}	
	We need $\tilde{K}=\Gamma_1+\Gamma_2+\Gamma_3+\Gamma_4<0$, and because it is the exponential that predominates, we need to $r_{b_2}k<\min\{r_{a_-},p,r_{b_1},q\}k$ , that is,
	$$r_{b_2}<\min\{r_{a_-},p,r_{b_1},q\}=\min \{r_{a_-},r_{b_1},q\},$$
which in fact happens by the hypothesis $ (B_2) $. Thus, there will be $ k_1 $ large enough, such that for all  $k>k_1,$ we will have $\tilde{K}<0$ giving us
	$$J(u^++t_kw_k) \leq  m_+ + m_{\infty}, \;\;\mbox{for all }\;\; t>0 \;\; \mbox{and}\;\;k>k_1 .$$	
	
	Finally, as we have already seen, we need to show that there is a certain $t^*$ such that $(u^+ + t^*w_k)\; \in \; M^-_{\lambda,\mu }$, to ensure the affirmation of Lemma.
	Let
	$$U_1=\left\{ u\in \HA ; \frac{1}{||u||_A}t^-\left(\frac{u}{||u||_A}\right)>1\right\} \; \cup \; \{0\};$$
	$$U_2=\left\{ u\in \HA ; \frac{1}{||u||_A}t^-\left(\frac{u}{||u||_A}\right)<1\right\}.$$
	Thus $ M^-_{\lambda,\mu}$ separates $ \HA $ into two related components $ U_1 $ and $ U_2 $. Further, from what we have seen in Lemma \ref{2.6(iii)} it follows that $ \HA \setminus M^-_{\lambda,\mu } = U_1 \cup U_2.  $
	If $ u\in M^+_{\lambda,\mu }$, then $t^+(u)=1$ and by item (ii) of Lemma \ref{Nehari}, we shall have
	\begin{equation}\label{tmax}
	1< t_{max,\mu}(u)< t^-(u).
	\end{equation}
	as $t^-(u)=\frac{1}{||u||_A }t^-(\frac{u}{||u||_A })$,follow that $ M^+_{\lambda,\mu } \subset U_1$.
	We affirm that there exists $ t_0> 0 $ such that $u^+_{\lambda,\mu}+t_0w_k \;\; \in\;\; U_2$. 
	
	To show this, let us first see that there exists a constant $ c> 0 $ such that
	$0<t^-(\frac{ u^+ +t w_k}{||u^+ +t w_k||_A})<c$ for each $t\geq 0$.  In fact, suppose the opposite to be absurd, that is, that there is a sequence $\{ t_n \}$ such that $t_n \rightarrow \infty$ and $t^-(\frac{ u^+ +t_n w_k}{||u^+ +t_n w_k||_ A }) \rightarrow \infty$ as $n   \rightarrow \infty$. Calling $v_n= \frac{ u^+ +t_n w_k}{||u^+ +t_n w_k||_A},$ temos $||v_n||_A=1$. As $t^-(v_n)v_n \in M^-_{\lambda,\mu } ,$ we have $||v_n||_A=1$. As $t^-(v_n)v_n \in M^-_{\lambda,\mu } ,$using Lebesgue's Dominated Convergence Theorem,
	\begin{eqnarray}
	\nonumber  \int_{\R^N}b_{\mu}v_n^p dx &=& \int_{\R^N}b_{\mu}\left(   \frac{ u^+ +t_n w_k}{||u^+ +t_n w_k||_A}\right) ^p dx   \\
	\nonumber   &=& \left(\frac{1}{||\frac{u^+}{t^n} +  w_k||_A}\right)^p   \int_{\R^N}b_{\mu}\left(  \frac{u^+}{t^n} +  w_k\right) ^p dx  \\
	\nonumber   & \rightarrow &   \frac{ \int_{\R^N}b_{\mu} w_k^p dx}{|| w_k||_A^p}\;\;\mbox{as}\;\;  n   \rightarrow \infty .
	\end{eqnarray}
	In this way we obtain
	\begin{eqnarray}
	\nonumber  J(t^-(v_n)v_n) &=&\frac{[t^-(v_n) ]^2}{2}-\frac{[t^-(v_n) ]^q}{q} \int_{\R^N}a_{\lambda}v_n^q dx\\
	\nonumber   &&-\frac{[t^-(v_n) ]^p}{p} \int_{\R^N}b_{\mu}v_n^p dx \rightarrow  -\infty  \;\;\mbox{as}\;\;  n   \rightarrow \infty ,
	\end{eqnarray}
but this contradicts the fact that functional is bounded from below in the Nehari manifold. Thus, we conclude that $ 0 <t^- (v_n)<c $ for a positive $ c $ constant.

Define
	$$t_0=\left(  \frac{p-2}{2p\alpha_{\infty} } c^2 - ||u^+||^2_A \right)^{\frac{1}{2}}+1.$$
	Note that
	\begin{eqnarray}
	\nonumber ||u^+ +t_0 w_k||_A^2  &=& ||u^+  ||_A^2 +t_0^2|| w_k||_ A^2 + o(1)\\
	\nonumber    &>& ||u^+||_A^2  + |c^2-|| u^+||_ A^2| + o(1)\\
	\nonumber   & >& c^2+ o(1)> \left[t^-\left( \frac{u^+ +t_0 w_k }{||u^+ +t_0 w_k||_A }\right)\right]^2  \;\;\mbox{as}\;\;  k   \rightarrow \infty .
	\end{eqnarray}
	Thus, there will be $k_2>k_1$ such that for all  $k>k_2,$
	$$\frac{ 1}{ ||u^+ +t_0 w_k||_A }t^-\left( \frac{u^+ +t_0 w_k }{||u^+ +t_0 w_k||_A }\right)<1$$
	that is, $u^+ +t_0 w_k \in U_2 $. 
	Now define a path $\gamma_l(s)= u^+ + st_0w_k $ to $s \in [0,1].$ Thus
	$$ \gamma_l(0)= u^+ \;\; \in \;\; U_1, \;\;\; \mbox{and} \;\;\; \gamma_l(s)= u^+ +  t_0w_k \;\; \in \;\; U_2.$$
	Since $\frac{1}{||u||_A }t^-(\frac{u}{||u||_A })$ is a continuous function for non zero values of $u$ and being $\gamma_l([0,1])$ a conected path, it follows that there will be a certain $s_l \in (0,1)$ such that $ u^+ + s_lt_0w_k \in M^-_{\lambda,\mu } $, as we wanted to proof.
	
\end{proof}

As was previously said $ M^{\lambda, \mu}$
is a $ C^1 $ range for appropriate $ \lambda$ values, as argued in \cite{edcarlos}. From this and the result we have just demonstrated, we are ready to deal with the existence of the second solution of the problem $ \Plm $ in $ M^-_{\lambda,\mu}$.

\begin{prop} \label{teo4.2}
	For each $\lambda >0 $ and $\mu\geq 0$, with $\lambda^{p-2}(1+\mu||b||_{\infty})^{2-q}< \Upsilon_0,$ exists $u^-_{\lambda,\mu} \in M^-_{\lambda,\mu}$ such that $J_{\lambda,\mu}(u^-_{\lambda,\mu})=m^-_{\lambda,\mu}$ and $J'_{\lambda,\mu}(u^-_{\lambda,\mu})=0$ in $H^{-1}_A$.
\end{prop}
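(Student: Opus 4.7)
The plan is to apply Ekeland's Variational Principle on the $C^1$-submanifold $M^-_{\lambda,\mu}$, which is nonempty, closed in $H^1_A(\R^N)$, and on which $\jf$ is bounded below by Lemma~\ref{bounded}. This yields a minimizing sequence $\{u_n\}\subset M^-_{\lambda,\mu}$ with $\jf(u_n)\to m^-_{\lambda,\mu}$ and, using the Lagrange multiplier rule together with the fact that $M^0_{\lambda,\mu}=\emptyset$ (Lemma~\ref{N0}) to control the multipliers as in Lemma~\ref{minglobal}, we get $\jf'(u_n)\to 0$ strongly in $H^{-1}_A$.

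Next I would invoke the local compactness Lemma~\ref{lemadecompacidade}. To apply it one needs the Palais--Smale level to satisfy $m^-_{\lambda,\mu}<m^+_{\lambda,\mu}+m_\infty$, which is exactly the content of Proposition~\ref{prop4.1}. The lemma then produces a nonzero $u^-_{\lambda,\mu}\in H^1_A(\R^N)$ and a subsequence with $u_n\to u^-_{\lambda,\mu}$ strongly in $H^1_A(\R^N)$. Strong convergence gives $\jf(u^-_{\lambda,\mu})=m^-_{\lambda,\mu}$ and, because $\jf'(u_n)\to 0$ and $\jf'$ is continuous, $\jf'(u^-_{\lambda,\mu})=0$ in $H^{-1}_A$, so $u^-_{\lambda,\mu}\in M_{\lambda,\mu}$.

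It remains to verify that $u^-_{\lambda,\mu}\in M^-_{\lambda,\mu}$, not $M^+_{\lambda,\mu}$. Under the hypothesis $\lambda^{p-2}(1+\mu\|b_2\|_\infty)^{2-q}<\Upsilon_0$ we have the splitting $M_{\lambda,\mu}=M^+_{\lambda,\mu}\cup M^-_{\lambda,\mu}$ from (\ref{uniao}). Since $u_n\in M^-_{\lambda,\mu}$ means $F''_{u_n}(1)<0$, strong convergence $u_n\to u^-_{\lambda,\mu}$ in $H^1_A(\R^N)$ and continuity of the map $u\mapsto F''_u(1)$ (a polynomial expression in $\|u\|_A^2$, $\int a_\lambda|u|^q$ and $\int b_\mu|u|^p$) give $F''_{u^-_{\lambda,\mu}}(1)\le 0$. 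But $F''_{u^-_{\lambda,\mu}}(1)=0$ is excluded by $M^0_{\lambda,\mu}=\emptyset$, hence $F''_{u^-_{\lambda,\mu}}(1)<0$ and $u^-_{\lambda,\mu}\in M^-_{\lambda,\mu}$.

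The main obstacle is the energy comparison $m^-_{\lambda,\mu}<m^+_{\lambda,\mu}+m_\infty$ needed to rule out the dichotomy in Lemma~\ref{lemadecompacidade}: this is precisely where the decay properties $(A)$, $(B_1)$, $(B_2)$ and the condition $r_{b_2}<\min\{r_{a_-},r_{b_1},q\}$ are used, and it has already been established in Proposition~\ref{prop4.1}. Once that estimate is in hand, the proof reduces to the standard Ekeland $+$ concentration-compactness scheme sketched above.
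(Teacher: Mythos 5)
Your proposal follows essentially the same route as the paper: Ekeland's variational principle on $M^-_{\lambda,\mu}$ (justified by Lemmas \ref{N0} and \ref{bounded}), the energy comparison $m^-_{\lambda,\mu}<m^+_{\lambda,\mu}+m_{\infty}$ from Proposition \ref{prop4.1}, and the compactness Lemma \ref{lemadecompacidade} to extract a strongly convergent subsequence. Your closing argument that the limit lies in $M^-_{\lambda,\mu}$ rather than $M^+_{\lambda,\mu}$ (via continuity of $u\mapsto F''_u(1)$ and $M^0_{\lambda,\mu}=\emptyset$) is a detail the paper leaves implicit, but the overall structure is identical and correct.
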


\begin{proof}
	By Lemmas \ref{N0}, \ref{bounded} and by Variational Ekland Principle there will be a sequence $\{u_n\} \subset M^-_{\lambda,\mu}$ satisfying $J_{\lambda,\mu}(u_n)\rightarrow\inf_{u \in M_{\lambda,\mu}}J_{\lambda,\mu}(u)=m^-_{\lambda,\mu}$ and $J'_{\lambda,\mu}(u_n)= o_n(1)$ strong in $H^{-1}_A$ as $n\rightarrow \infty$.
	As $m^-_{\lambda,\mu}<m^+_{\lambda,\mu}+m_{\infty} $, by Lemma  \ref{lemadecompacidade}(ii), exist a subsequence ${u_n}$ and $u^-_{\lambda,\mu} \in M^-_{\lambda,\mu}$ such that $u_n= u^-_{\lambda,\mu}+o_n(1)$ strong in $\HA$ as $n\rightarrow  \infty$. Follow that $J_{\lambda,\mu}(u^-_{\lambda,\mu})=m^-_{\lambda,\mu}$ and $J'_{\lambda,\mu}(u_n)=0$ in $H^{-1}_A$.
\end{proof}

With this result we can then conclude the proof of the Theorem \ref{teo1.1(i)}.

\begin{proof}
	[Proof of the Theorem \ref{teo1.1(i)}] By Theorems \ref{teo3.3} and \ref{teo4.2} we can conclude that the $ \Plm $ problem has at least two solutions $\uma$ and $\ume$ with $\jf(\uma)<0<\jf(\ume).$
	
\end{proof} 

 \section{ Third Solution}

\subsection{ Some considerations}
To get the third solution of the $ \Plm $ problem, we will need some results which is done next. For this, we highlight the set defined below for $\lambda=0$ and $\mu=0$
$$ M^-_{a_0,b_0}=\{u \in \HA\setminus\{0\} :\langle J'_{a_0,b_0}(u),u\rangle =0\}$$
where 
\begin{eqnarray*}\label{funcional.0}
	J_{a_0,b_0}&=&\frac{1}{2}\intA - \frac{1}{q} \int a_0(x)|u|^qdx - \frac{1}{p} \int b_0(x)|u|^pdx\\
	&=&\frac{1}{2}\intA - \frac{1}{q} \int a_-(x)|u|^qdx - \frac{1}{p} \int b_1(x)|u|^pdx.
\end{eqnarray*}

\begin{lemma}\label{5.1.a}
	We have
	$$\inf_{u \in M^-_{a_0,b_0}}J_{a_0,b_0}(u)=\inf_{u \in M^{\infty}}J_{\infty}(u)=m^{\infty}.$$
\end{lemma}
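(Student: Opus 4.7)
The plan is to establish the two inequalities $\inf_{u \in M^-_{a_0,b_0}} J_{a_0,b_0}(u) \ge m^{\infty}$ and $\inf_{u \in M^-_{a_0,b_0}} J_{a_0,b_0}(u) \le m^{\infty}$ separately and conclude equality.

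First I would handle the lower bound by a pointwise comparison of functionals. Since $a_0 = a_- \le 0$ and $b_0 = b_1 \le 1$, one has
$$J_{a_0,b_0}(v) - J_{\infty}(v) = -\frac{1}{q}\int_{\R^N} a_- |v|^q\,dx + \frac{1}{p}\int_{\R^N}(1 - b_1)|v|^p\,dx \ge 0$$
for every $v \in \HA$. Given $u \in M^-_{a_0,b_0}$, the inequality $\int_{\R^N} a_0|u|^q\,dx \le 0$ puts us in case (I) of Lemma \ref{Nehari}, so $J_{a_0,b_0}(u) = \max_{t > 0} J_{a_0,b_0}(tu)$. Denoting by $t_\infty(u) > 0$ the unique scalar with $t_\infty(u) u \in M_{\infty}$, this gives
$$J_{a_0,b_0}(u) \ge \max_{t > 0} J_{\infty}(tu) \ge J_{\infty}(t_\infty(u)u) \ge m^{\infty},$$
and taking infimum yields the $\ge$ direction.

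For the upper bound, I would use translated copies of the ground state. Let $w_0 = z_0 e^{-i d \cdot x}$ be the ground state of $(P_{\infty})$ from (\ref{exp1w})--(\ref{exp2w}), fix $e \in S^{N-1}$, and set $w_k(x) = w_0(x + ke)$ for $k \in \N$. The decay hypotheses $(A)$ and $(B_1)$, together with (\ref{exp1w}) applied to $|w_k|$, yield (by the same argument that bounded $\Gamma_1$ and $\Gamma_3$ in Proposition \ref{prop4.1}) that
$$\int_{\R^N} a_- |w_k|^q\,dx \to 0 \quad \textrm{and} \quad \int_{\R^N}(1 - b_1)|w_k|^p\,dx \to 0 \quad \textrm{as } k \to \infty,$$
while $\int_{\R^N} |w_k|^p\,dx = \int_{\R^N} z_0^p\,dx$ is constant in $k$. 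Using $A(x) \to d$ at infinity, the change of variable $y = x + ke$, the bound (\ref{exp2w}), and dominated convergence, one also obtains $\|w_k\|_A^2 \to \|\nabla_d w_0\|_{L^2}^2 + \|w_0\|_{L^2}^2$. Consequently, for each fixed $t > 0$, $J_{a_0,b_0}(t w_k)$ converges to the value at $t w_0$ of the limiting $(P_{\infty})$-functional. Letting $t_k = t^-(w_k)$ be the unique scalar (Lemma \ref{Nehari}(i)) with $t_k w_k \in M^-_{a_0,b_0}$, uniqueness of the maximum of the limiting fibering map forces $t_k \to 1$, and so $J_{a_0,b_0}(t_k w_k) \to m^{\infty}$, giving the $\le$ direction.

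The main obstacle I anticipate is the convergence $\|w_k\|_A^2 \to \|\nabla_d w_0\|_{L^2}^2 + \|w_0\|_{L^2}^2$: because $(-i\nabla + A)^2$ is not translation invariant, one must expand $|\nabla_A w_k|^2$ after translation, exploit the pointwise limit $A(y - ke) \to d$ together with (\ref{exp2w}) to apply dominated convergence, and carefully handle the cross-term involving $(A - d) \cdot \nabla z_0$. The convergence $t_k \to 1$ is delicate for a related reason: one needs uniform bounds $0 < c \le t_k \le C$ before passing to the limit in the defining Nehari equation, which in turn relies on a uniform lower bound for $\int |w_k|^p$ and on the control of $\|w_k\|_A$ just described.
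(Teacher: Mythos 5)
Your proposal is correct and follows essentially the same route as the paper: the lower bound via the pointwise comparison $J_{a_0,b_0}\geq J_{\infty}$ (using $a_-\leq 0$ and $b_1\leq 1$) together with the sup-over-$t$ characterization from Lemma \ref{Nehari}(i), and the upper bound via the translates $w_k$ with $\int a_-|w_k|^q\to 0$, $\int(1-b_1)|w_k|^p\to 0$ and $t^-(w_k)\to 1$. Your extra attention to the convergence of $\|w_k\|_A^2$ is a point the paper glosses over (it simply asserts $\|w_k\|_A^2=\frac{2p}{p-2}m^{\infty}$ for all $k$, which is immediate only for the translation-invariant constant-field norm), but it does not change the argument.
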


\begin{proof}
	Let $w_k$ be as defined above. Because we are working with $ \lambda = 0 $, we have $a(x)=\lambda a_+(x)+a_-(x)=a_-(x)<0$ whence $\int_{\R^N} a_-| t^-(w_k)w_k|^qdx\leq 0,$ hence by Lemma \ref{Nehari}(i) there is only one $t^-(w_k)>\left(\frac{2-q}{p-q}\right)^{\frac{1}{p-2}}$ such that $t^-(w_k)w_k \in  M^-_{a_0,b_0}$ for all $k>0,$ that is, $ J'_{a_0,b_0} (t^-(w_k)w_k)=0, $ giving us
	\begin{equation}\label{t_menos}
	||t^-(w_k)w_k||^2_A= \int_{\R^N} a_-| t^-(w_k)w_k|^qdx +\int_{\R^N} b_-| t^-(w_k)w_k|^p dx.
	\end{equation}
	As $w_0$ is a ($E_{\infty}$) solution and remembering that the functional associated with ($E_{\infty}$) is given by $I(u)=\frac{1}{2}||u||_A^2-\frac{1}{p}||u||_p^p,$ and $I'(u)=||u||_A^2- ||u||_p^p$ we have
	$$I'(w_0)w_0=||w_0||_A^2- ||w_0||_p^p=0 \Rightarrow ||w_0||_A^2= ||w_0||_p^p,$$
	whence
	\begin{eqnarray*}
		\nonumber  m_{\infty} &=& I(w_0)=\frac{1}{2}||w_0||_A^2-\frac{1}{p}||w_0||_p^p \\
		\nonumber    &=&  \frac{1}{2}||w_0||_A^2-\frac{1}{p}||w_0||_A^2=\frac{p-2}{2p}||w_0||_A^.
	\end{eqnarray*}
	Being $w_0$ solution of ($E_{\infty}$) it follows that $w_k(x)=w_0(x+ke)$ so it is, with this and for  $ I'(w_0)w_0=0,$ we have $I'(w_k)w_k=0$. So
	\begin{equation}\label{m_infty>0}
	|| w_k||^2_A= \int_{\R^N}  |  w_k|^qdx =\frac{2p}{p-2}m^{\infty} \;\; \mbox{for all}\;\;k\geq 0.
	\end{equation}
	
	It is known that $ w_n $ is bounded in $L^{r'}$ and $ w_n \rightarrow 0 $ a.e., by Theorem \cite[Theorem 13.44]{Hewitt} that $ w_n \rightharpoonup 0 $ weakly in $ L^{r'}$. From condition ($A$), $ a_- \in (L^{r'})'=L^r$ we get
	\begin{equation}\label{int_a}
	\int_{\R^N} a_- |w_k|^qdx \rightarrow 0 \;\; \mbox{as}\;\;k\rightarrow \infty.
	\end{equation}
	In addition, by $(B_1)$ and $(B_2)$
	\begin{equation}\label{int_b}
	\int_{\R^N}(1-b_1) | w_k|^qdx =  \int_{B(0,R)}(1-b_1) | w_k|^qdx+ \int_{B^c(0,R)}(1-b_1) | w_k|^qdx \rightarrow 0 \;\; \mbox{as}\;\;w_k\rightarrow \infty.
	\end{equation}
	By (\ref{t_menos}), (\ref{int_a}) and (\ref{int_b}) we have that $t^-(w_k)\rightarrow 1 $ as $ k\rightarrow \infty.$ Likewise
	$$ \lim_{k\rightarrow \infty} J_{a_0,b_0} (t^-(w_k)w_k)= \lim_{k\rightarrow \infty} J_{\infty} (t^-(w_k)w_k)=m_{\infty}.$$
	Thus
	\begin{eqnarray}\label{geq}
	m_{\infty} &=& \inf_{u \in M^{\infty}}J_{\infty}(u)=\lim_{k\rightarrow \infty} J_{\infty} (t^-(w_k)w_k)\geq \inf_{u \in M^-_{a_0,b_0}}J_{a_0,b_0}(u).
	\end{eqnarray}
	We also have to $u\in  M_{a_0,b_0}$, by Lemma \ref{Nehari}(i), $J_{a_0,b_0}(u)=\sup_{ t\geq 0}J_{a_0,b_0}(tu),$ and more, there is a single $t^{\infty}>0$ such that $t^{\infty}u \in M^{\infty}$. So
	\begin{eqnarray}
	\nonumber  J_{a_0,b_0} (t^{\infty}u)   &=&\frac{1}{2}||t^{\infty}u||_A^2-\frac{(t^{\infty})^q}{q}\int_{\R^N}a_-|u|^qdx  -\frac{(t^{\infty})^p}{p}\int_{\R^N}b_1|u|^pdx  \\
	\nonumber    &\geq&  \frac{1}{2}||t^{\infty}u||_A^2  -\frac{(t^{\infty})^p}{p}\int_{\R^N}|u|^pdx \\
	\nonumber   &=&  J_{\infty} (t^{\infty}u) \geq m_{\infty},
	\end{eqnarray}
	whence 
	\begin{equation}\label{leq}
	\inf_{u \in M_{a_0,b_0}} J_{a_0,b_0} (t^{\infty}u)\geq m_{\infty}.
	\end{equation}
	In this way, by (\ref{geq}) and (\ref{leq})
	$$\inf_{u \in M_{a_0,b_0}}J_{a_0,b_0}(u)=\inf_{u \in M^{\infty}}J_{\infty}(u)=m^{\infty}.$$
\end{proof}

The following result shows that the infimum is not assumed in $M_{a_0,b_0}.$

\begin{lemma}
	Problem $(P_{a_0,b_0})$ admits no solution $u_0$ such that $J_{a_0,b_0}(u_0)=\inf_{u \in M_{a_0,b_0}}J_{a_0,b_0}(u)$.
\end{lemma}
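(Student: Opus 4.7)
The plan is to argue by contradiction. Suppose $u_0 \in \HA \setminus\{0\}$ is a solution of $(P_{a_0,b_0})$ with $J_{a_0,b_0}(u_0)=m^\infty$. Then $u_0 \in M_{a_0,b_0}$, and because $a_-\le 0$ we have $\int a_-|u_0|^q\,dx \le 0$, so Lemma \ref{Nehari}(i) applied with $\lambda=0$ places $u_0 \in M^-_{a_0,b_0}$ and gives
$$J_{a_0,b_0}(u_0) = \max_{t\ge 0} J_{a_0,b_0}(tu_0).$$

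Next, let $t^\infty>0$ be the unique number with $t^\infty u_0 \in M^\infty$. A direct computation, already used in the proof of Lemma \ref{5.1.a}, yields
$$J_{a_0,b_0}(t^\infty u_0) - J_\infty(t^\infty u_0) = -\frac{(t^\infty)^q}{q}\int a_-|u_0|^q\,dx + \frac{(t^\infty)^p}{p}\int (1-b_1)|u_0|^p\,dx \ge 0,$$
since $a_- \le 0$ and $b_1 \le 1$. Combining with the previous display and $t^\infty u_0 \in M^\infty$ gives the chain
$$m^\infty = J_{a_0,b_0}(u_0) \ge J_{a_0,b_0}(t^\infty u_0) \ge J_\infty(t^\infty u_0) \ge m^\infty,$$
so equality holds throughout. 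Equality in the middle inequality, together with the sign information on $a_-$ and $1-b_1$, forces
$$\int a_-(x)|u_0|^q\,dx = 0 \qquad \text{and} \qquad \int (1-b_1(x))|u_0|^p\,dx = 0,$$
while equality at the right end says that $t^\infty u_0$ is a minimizer of $J_\infty$ on $M^\infty$.

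The plan is then to upgrade $t^\infty u_0$ from a constrained minimizer to a ground state of $(P_\infty)$ by the Lagrange-multiplier principle (as in Lemma \ref{minglobal}). By the Ding--Liu reduction \cite{DL} together with the Berestycki--Lions/Kwong uniqueness of the positive radial solution of $(E_\infty)$, $|t^\infty u_0|$ coincides with a translate of $z_0$, which by \cite{GNN} is strictly positive on $\R^N$. Hence $|u_0|>0$ almost everywhere. But hypothesis $(A)$ guarantees $a_-\not\equiv 0$, so the set $\{a_-<0\}$ has positive Lebesgue measure, and consequently $\int a_-(x)|u_0|^q\,dx<0$, contradicting the identity above.

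The main delicate point I anticipate is the identification step: showing that any minimizer of $J_\infty$ on $M^\infty$ is, up to phase and translation, the canonical ground state $w_0=z_0 e^{-id\cdot x}$, so that its modulus never vanishes. For this one must combine the diamagnetic inequality to pass from the magnetic problem to the real scalar equation $(E_\infty)$, then invoke Kwong's uniqueness theorem, and finally lift the conclusion back to the magnetic setting via Kurata's observation. Everything else reduces to a sign inspection of the two integrals above.
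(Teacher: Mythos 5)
Your proof is correct and follows essentially the same route as the paper's: the same contradiction chain $m^{\infty} = J_{a_0,b_0}(u_0) \ge J_{a_0,b_0}(t^{\infty}u_0) \ge J_{\infty}(t^{\infty}u_0) \ge m^{\infty}$, forcing $\int a_-|u_0|^q\,dx = 0$, followed by identifying $t^{\infty}u_0$ with a ground state of $(P_{\infty})$ whose modulus is everywhere positive, which contradicts $a_-\not\equiv 0$. Your write-up is in fact slightly more careful than the paper's, since you keep the nonnegative $(1-b_1)$ term explicit and you flag the passage from constrained minimizer to positive ground state (via the Lagrange multiplier rule, the Ding--Liu reduction and Kwong's uniqueness) as the delicate step, which the paper disposes of only by reference to its regularity theorem.
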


\begin{proof}
	Suppose by contradiction that exists $u_0$ such that $J_{a_0,b_0}(u_0)=\inf_{u \in M_{a_0,b_0}}J_{a_0,b_0}(u)$. Thus, by Lemma \ref{Nehari}(i), $J_{a_0,b_0}(u_0)=\sup_{ t\geq 0}J_{a_0,b_0}(tu_0),$ and more, there is a single $t_{u_0}>0$ such that $t_{u_0} u_0 \in M^{\infty}$. Thus, by $ (A) $ and by the previous Lemma
	\begin{eqnarray}
	\nonumber    m^{\infty} &=&  \inf_{u \in M^{\infty}}J_{\infty}(u)=\inf_{u \in M^-_{a_0,b_0}}J_{a_0,b_0}(u)=J_{a_0,b_0}(u_0) \\
	\nonumber    &\geq&  J_{a_0,b_0}(t_{u_0}u_0)\geq J_{\infty}(t_{u_0}u_0) - \frac{t_{u_0}^q}{q}\int_{\R^N}a_-|u_0|^qdx\\
	\nonumber    &\geq &m^{\infty}  - \frac{t_{u_0}^q}{q}\int_{\R^N}a_-|u_0|^qdx.
	\end{eqnarray}
	This implies that $\int_{\R^N}a_-|u|^qdx=0$ and still $u_0\equiv 0$ where $a_-(x)\neq 0 $. For the hypothesis $ (A) $ the set of points of $ \R^N $ where $ a_- (x)> 0 $ has positive measure, which implies that the set where $ u_0 \equiv 0 $ has positive measure.
	
	We also have to $  m^{\infty} = \inf_{u \in M^{\infty}}J_{\infty}(u)= J_{\infty}(t_{u_0}u_0),$ that is, $t_{u_0}u_0$ is a $(E_{\infty})$ solution, and more, using the same argument used in the proof of Theorem \ref{regul} (ii) we obtain that  $|t_{u_0}u_0|$ is positive solution of $(E_{\infty})$. This contradicts $ u_0 \equiv 0 $ in a positive measure set, completing the proof.
\end{proof}

\begin{lemma}\label{PS.seq.J.infty}
	Let $\{u_n\}$ be a minimizing sequence in $M_{a_0,b_0}$ for the functional $J_{a_0,b_0}$. Then,
	\begin{description}
		\item[  $(i)$ ]  $ \int_{\R^N}a_-|u_n|^qdx=o(1);$
		\item[$(ii)$   ]   $\int_{\R^N}(1-b_1)|u|^pdx=o(1)$.
	\end{description}	
	Moreover, $\{u_n\}$ is a $(PS)_{m_{\infty}}$ sequence in $\HA$ for $J_{\infty}$.
\end{lemma}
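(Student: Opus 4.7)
My plan is to use the ordering $J_{a_0,b_0}(v)\ge J_\infty(v)$ (which holds pointwise on $\HA$ because $a_-\le 0$ and $1-b_1\ge 0$) together with the fibering-map picture from Lemma \ref{Nehari}(i) to pin down the defect terms in the identity
\begin{equation*}
J_{a_0,b_0}(v)=J_\infty(v)-\tfrac{1}{q}\int a_-|v|^q+\tfrac{1}{p}\int(1-b_1)|v|^p.
\end{equation*}
Since $\{u_n\}\subset M_{a_0,b_0}=M_{a_0,b_0}^-$ (the set $M_{a_0,b_0}^+$ is empty because $\int a_-|u_n|^q\le 0$), $t=1$ is the unique maximum point of $F_{u_n}(t)=J_{a_0,b_0}(tu_n)$, so $J_{a_0,b_0}(u_n)=\sup_{t\ge 0}J_{a_0,b_0}(tu_n)$. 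For each $n$ choose the unique $t_n>0$ with $t_nu_n\in M_\infty$; from $t_n^{p-2}=\|u_n\|_A^2/\|u_n\|_p^p$ together with the boundedness of $\{u_n\}$ (via coercivity of $J_{a_0,b_0}$, as in Lemma \ref{bounded}) and the lower bound $\|u_n\|_A\ge c>0$ (coming from $\|u_n\|_A^2\le\int b_1|u_n|^p\le\|u_n\|_p^p\le C\|u_n\|_A^p$), one obtains $0<c_1\le t_n\le c_2<\infty$.

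For (i) and (ii), insert $tu_n=t_nu_n$ into the chain
\begin{equation*}
m_\infty+o(1)=J_{a_0,b_0}(u_n)\ge J_{a_0,b_0}(t_nu_n)=J_\infty(t_nu_n)-\tfrac{t_n^q}{q}\!\int a_-|u_n|^q+\tfrac{t_n^p}{p}\!\int(1-b_1)|u_n|^p\ge m_\infty+R_n,
\end{equation*}
where $R_n:=-\tfrac{t_n^q}{q}\!\int a_-|u_n|^q+\tfrac{t_n^p}{p}\!\int(1-b_1)|u_n|^p\ge 0$. Hence $R_n=o(1)$, and because $t_n$ is bounded below, each of the two non-negative summands must separately be $o(1)$, yielding (i) and (ii).

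For the PS statement, first note that the pointwise identity above, combined with (i) and (ii), gives $J_\infty(u_n)=J_{a_0,b_0}(u_n)+o(1)=m_\infty+o(1)$. For the derivative, by Ekeland's variational principle applied to $J_{a_0,b_0}|_{M_{a_0,b_0}}$ we may assume $\{u_n\}$ is also a Palais--Smale sequence for $J_{a_0,b_0}$ constrained to $M_{a_0,b_0}$; since $M^0_{a_0,b_0}=\emptyset$ (Lemma \ref{N0} at $\lambda=0$), the Lagrange-multiplier argument of Lemma \ref{minglobal} gives $J'_{a_0,b_0}(u_n)\to 0$ in $\HA^{-1}$. It remains to estimate the difference $J'_\infty(u_n)-J'_{a_0,b_0}(u_n)$, which acts on $v$ as $\mathrm{Re}\!\int((b_1-1)|u_n|^{p-2}u_n+a_-|u_n|^{q-2}u_n)\bar v\,dx$. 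For the $b_1$-part, Hölder with exponents $p/(p-1)$ and $p$, combined with $(1-b_1)^{p/(p-1)}\le 1-b_1$, gives the bound $\bigl(\int(1-b_1)|u_n|^p\bigr)^{(p-1)/p}\|v\|_p=o(1)\|v\|_A$. For the $a_-$-part, the splitting $|a_-||u_n|^{q-1}|v|=(|a_-|^{(q-1)/q}|u_n|^{q-1})\cdot(|a_-|^{1/q}|v|)$ and Hölder with exponents $q/(q-1)$ and $q$ yield a bound of the form $\bigl(\int|a_-||u_n|^q\bigr)^{(q-1)/q}\|a_-\|_{q'}^{1/q}\|v\|_p=o(1)\|v\|_A$.

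The main obstacle I anticipate is the last step: making the difference $J'_\infty(u_n)-J'_{a_0,b_0}(u_n)$ vanish in $\HA^{-1}$ uniformly in $\|v\|_A\le 1$ rather than merely pointwise. The naive Hölder estimates only control this difference by constants, so the crucial trick is to absorb a power of $|a_-|$ (respectively $1-b_1$) into the $u_n$ factor in order to recycle the $o(1)$ information from (i) and (ii). Once this splitting is set up, the rest is routine.
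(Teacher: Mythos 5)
Your argument for parts (i) and (ii) is essentially the paper's: both proofs project $u_n$ onto $M_\infty$ via the unique $t_n>0$, use $J_{a_0,b_0}(u_n)=\sup_{t\ge 0}J_{a_0,b_0}(tu_n)\ge J_{a_0,b_0}(t_nu_n)\ge m_\infty+R_n$ with $R_n\ge 0$, and then kill $R_n$ using $J_{a_0,b_0}(u_n)=m_\infty+o(1)$ from Lemma \ref{5.1.a}; the only cosmetic difference is that you bound $t_n$ from below explicitly via $t_n^{p-2}=\|u_n\|_A^2/\|u_n\|_p^p$ and two-sided bounds on $\|u_n\|_A$, whereas the paper rules out $t_n\to 0$ by contradiction ($J_\infty(t_nu_n)=\frac{p-2}{2p}\|t_nu_n\|_A^2\to 0<m_\infty$). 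Where you genuinely diverge is the ``Moreover'' part: the paper derives $\|u_n\|_A^2=\|u_n\|_p^p+o(1)$ and $J_\infty(u_n)=m_\infty+o(1)$ and then simply cites \cite[Lemma 7]{wang.wu} to conclude the $(PS)_{m_\infty}$ property, while you give a self-contained argument (Ekeland on the constrained problem, the Lagrange-multiplier elimination of Lemma \ref{minglobal} using $M^0_{a_0,b_0}=\emptyset$, and then the weighted H\"older splittings $|a_-|=|a_-|^{(q-1)/q}\cdot|a_-|^{1/q}$ and $(1-b_1)=(1-b_1)^{(p-1)/p}\cdot(1-b_1)^{1/p}$ to show $J'_\infty(u_n)-J'_{a_0,b_0}(u_n)\to 0$ in $\HA^{-1}$). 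That estimate is correct and is the right trick; the one point you should make explicit is the ``we may assume'' step: the lemma is stated for an \emph{arbitrary} minimizing sequence, and Ekeland produces a possibly different sequence $\{v_n\}$ with $\|v_n-u_n\|_A\to 0$, so you must transfer the conclusion back to $\{u_n\}$ via the bound $\||u_n|^{p-2}u_n-|v_n|^{p-2}v_n\|_{p'}\le C(\|u_n\|_p^{p-2}+\|v_n\|_p^{p-2})\|u_n-v_n\|_p$ (and its analogues), which shows $J'_\infty(u_n)-J'_\infty(v_n)\to 0$ on bounded sets. With that sentence added, your route is a complete replacement for the external citation, at the cost of a longer proof; the paper's route is shorter but leaves the key autonomous-problem fact to the literature.
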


\begin{proof}
	For each $n \in \N$ there is only one $t_n>0$ such that $t_nu_n \in M^{\infty}$, that is
	$$
	J'_{\infty}(t_nu_n )t_nu_n =t_n^2||u_n||_A^2-t_n^p\int_{\R^N}|u_n|^pdx=0\;\; \Rightarrow  \;\; t_n^2||u_n||_A^2=t_n^p\int_{\R^N}|u_n|^pdx.
	$$
	Now, by Lemma \ref{Nehari}(i)
	\begin{eqnarray}
	\nonumber    \sup_{t\geq 0} J_{a_0,b_0}(t u_n)&= &J_{a_0,b_0}( u_n) \geq   J_{a_0,b_0}(t_n u_n)\\
	\nonumber    &=&  J_{\infty} (t_nu_n ) + \frac{t_n^p }{p} \int_{\R^N}(1-b_1)|u_n|^pdx - \frac{t_n^q }{q} \int_{\R^N}a_-|u_n|^qdx\\
	\nonumber    &\geq&m_{\infty} + \frac{t_n^p }{p} \int_{\R^N}(1-b_1)|u_n|^pdx - \frac{t_n^q }{q} \int_{\R^N}a_-|u_n|^qdx.
	\end{eqnarray}
	Since $ \{u_n \} $ is a minimizing sequence in $M_{a_0,b_0}$ for the functional $J_{a_0,b_0}$ and by Lemma \ref{5.1.a} we have
	$$ J_{a_0,b_0}(u_n)=  \inf_{u \in M^-_{a_0,b_0}} J_{a_0,b_0} ( u)+o(1)= m^-_{a_0,b_0}+o(1)=m^{\infty}+o(1). $$
	So
	$$\frac{t_n^p }{p} \int_{\R^N}(1-b_1)|u_n|^pdx=o(1)$$
	and
	$$\frac{t_n^q }{q}\int_{\R^N}a_-|u_n|^qdx=o(1).$$
	
	To ensure the result from these two equalities above, we need to show that $ t_n \nrightarrow 0 $, that is, that exists $c_0>0$ such that $t_n>c_0$ for all n. 
	Suppose by contradiction that  $t_n \rightarrow 0$ as $n\rightarrow \infty $. As $J_{a_0,b_0} ( u_n)=m^{\infty}+o(1), $ by Lemma \ref{bounded}, $||u_n||_A$ is uniformly bounded and therefore $||t_n u_n||_A \rightarrow 0$ as $n\rightarrow \infty $.
	
	As $t_nu_n\in M^{\infty}$, follow that
	$$J_{\infty}(t_nu_n)= \left( \frac{p-2}{2p} \right)||t_nu_n ||_A^2 \rightarrow 0, $$
	which contradicts the fact $J_{\infty}(t_nu_n)\geq  m^{\infty}>0$. Thus
	$$ \int_{\R^N}(1-b_1)|u_n|^pdx=0(1)$$
	and
	$$ \int_{\R^N}a_-|u_n|^qdx=0(1).$$
	This implies that $||u_n ||_A^2 = \int_{\R^N}|u_n|^pdx+o(1) $ and so $J_{\infty}( u_n)=  m^{\infty}+o(1)$.
	
	We can conclude the last statement of Lemma using the  \cite[Lemma7]{wang.wu}, 
	whence we get that  $\{u_n\}$ is a sequence $(PS)_{m^{\infty}}$ in $\HA$ for $J_{\infty}.$

\end{proof}

The following result is of fundamental importance in obtaining the third solution.

\begin{lemma}\label{Lemma5.3}
	Consider the set $  \mathcal{C}= \{  u\in M^-_{a_0,b_0}; J_{a_0,b_0}(u)<m_{\infty} +l_0\}$. There is a $ l_0> 0 $ such that 
	$$ \int\frac{x}{|x|} (|\nabla u |^2 +u^2) \neq 0  $$
	for all $u\in \mathcal{C}$.
\end{lemma}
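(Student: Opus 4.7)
The plan is to argue by contradiction. Assuming no such $l_0$ exists, for each $n$ I would pick $u_n \in M^-_{a_0,b_0}$ with $J_{a_0,b_0}(u_n) < m_\infty + 1/n$ and $\int \frac{x}{|x|}(|\nabla_A u_n|^2 + u_n^2)\,dx = 0$. By Lemma \ref{5.1.a} the sequence $\{u_n\}$ is a minimizing sequence for $J_{a_0,b_0}$ on $M^-_{a_0,b_0}$, and by Lemma \ref{PS.seq.J.infty} it is a bounded $(PS)_{m_\infty}$-sequence for $J_\infty$ in $H_A^1$ satisfying the extra decay $\int a_-|u_n|^q\,dx = o(1)$ and $\int(1-b_1)|u_n|^p\,dx = o(1)$.

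The second step is a concentration-compactness analysis of $\{u_n\}$ viewed as a $(PS)_{m_\infty}$-sequence for $J_\infty$. Since $m_\infty$ is precisely the ground-state energy of $J_\infty$ (attained by $\bar u$ in Lemma \ref{lema.m.inf}), a splitting lemma in the magnetic setting, combined with the hypothesis $A(x) \to d$ as $|x|\to\infty$, yields two alternatives up to a subsequence: either $u_n \to \tilde u$ strongly in $H_A^1$ with $\tilde u$ a ground state of $J_\infty$, or there exists $\{y_n\} \subset \R^N$ with $|y_n| \to \infty$ such that $u_n(\cdot + y_n)$ converges strongly in the translated magnetic norm to a ground state $\tilde u$ of the limit problem $(P_\infty)$, so that $|\tilde u| = z_0$ up to translation.

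I would rule out the first alternative as follows. Strong convergence together with the bounds from Lemma \ref{PS.seq.J.infty} forces $\int a_-|\tilde u|^q\,dx = 0$ and $\int(1-b_1)|\tilde u|^p\,dx = 0$. By hypotheses $(A)$ and $(B_1)$, $a_-$ and $1-b_1$ are strictly positive on sets of positive measure, so $\tilde u$ would have to vanish on these sets, contradicting the fact recalled at the beginning of this section that $|\tilde u|$ is a strictly positive solution of $(E_\infty)$. Hence the second alternative holds with $|y_n| \to \infty$. After passing to a further subsequence, $y_n/|y_n| \to e$ for some $e \in S^{N-1}$. The change of variables $x = y + y_n$ in the assumed identity gives
\[
0 = \int \frac{y+y_n}{|y+y_n|}\bigl(|\nabla_A u_n(y+y_n)|^2 + u_n(y+y_n)^2\bigr)\,dy.
\]
For each fixed $y$ the unit vector $(y+y_n)/|y+y_n|$ converges to $e$ and is dominated by $1$, while the scalar factor converges in $L^1$ by the strong convergence of the translated sequence; dominated convergence then gives the limit $e\,\|\tilde u\|_A^2 \neq 0$, the desired contradiction.

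The main obstacle is the splitting step, since the magnetic operator $-\Delta_A$ is not translation-invariant, so one cannot freely translate $u_n$ while preserving the $H_A^1$-norm. Rigorously passing to the autonomous limit operator $-\Delta_d$ requires carefully exploiting $A(x) \to d$ at infinity and controlling the interaction terms in the magnetic gradient, typically via a Brezis--Lieb-type decomposition adapted to the magnetic setting and the exponential decay estimates (\ref{exp1w})--(\ref{exp2w}) on ground states.
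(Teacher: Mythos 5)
Your proposal is correct and follows essentially the same route as the paper: a contradiction argument producing a minimizing sequence, Lemma \ref{PS.seq.J.infty} to view it as a $(PS)_{m_\infty}$-sequence for $J_\infty$, a splitting lemma to extract a single translated ground-state bubble, exclusion of the compact case via $\int a_-|u_n|^q\,dx=o(1)$ together with the strict positivity of the ground state, and finally the change of variables plus dominated convergence yielding the nonzero limit $\frac{2p}{p-2}m_\infty e$. The only cosmetic difference is that you phrase the splitting step as a dichotomy (strong convergence versus escape to infinity), whereas the paper cites \cite[Lemma 2.3]{furt.maia} directly to get $\|u_n-w_0(\cdot-x_n)\|_A\to 0$ and then shows $|x_n|\to\infty$; your remark that the magnetic, non-translation-invariant setting requires an adapted Brezis--Lieb decomposition is a fair caveat that the paper passes over.
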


\begin{proof}
	Suppose by contradiction that there is no such $l_0  $, then, there will be a sequence $\{l_n\}$ such that for all $n\in \N$ exist $u  \in M^-_{a_0,b_0}$ with $J_{a_0,b_0}(u)\leq m_{\infty} +l_n,$ but $ \int\frac{x}{|x|} (|\nabla u |^2 +u^2) = 0  $.So we can take  $\{u_n\} \in M^-_{a_0,b_0}$ so that $J_{a_0,b_0}(u)= m_{\infty} +o(1)\leq m_{\infty} +l_0.$ With this and by Lemma \ref{PS.seq.J.infty} follows that $\{u_n\}$ is a $(PS)_{m_{\infty}} $-sequence in $\HA$ for $J_{\infty}$.
	Using Lemma \ref{bounded} there will be a subsequence $\{u_n\}$ and $u_0 \in \HA$ such that $ u_n \rightharpoonup u_0$ weak in $\HA$. By Splitting Lemma as in \cite[Lemma2.3]{furt.maia}, there will be a sequence $\{x_n\} \subset \R^N,$ and a positive solution $w_0 \in \HA$ of $(E^{\infty})$ such that
	\begin{equation}\label{conv.seq}
	||u_n(x) - w_0(x-x_n)||_{A} \rightarrow 0 \;\;\; \mbox{as} \;\; n\rightarrow \infty.
	\end{equation}
	
	We will now show that $|x_n| \rightarrow \infty$ as $n \rightarrow \infty$.In fact, suppose otherwise, then we will have a bounded sequence $ \{x_n \} $ and there will be $x_0 \in \R^N$ such that $x_n \rightarrow x_0$. Hence, by (\ref{conv.seq})
	\begin{eqnarray}
	\nonumber  \int_{\R^N}  a_-|u_n|^qdx &=& \int_{\R^N}  a_-(x)|w_0( x-x_n)|^qdx +o(1) \\
	\nonumber    &=&   \int_{\R^N}  a_-(x+x_n)|w_0( x )|^qdx +o(1) \\
	\nonumber    &=& \int_{\R^N}  a_-(x+x_0)|w_0( x )|^qdx +o(1),
	\end{eqnarray}
	which is absurd for the result obtained in Lemma \ref{PS.seq.J.infty}. So we can assume that $\frac{x_n}{|x_n|}\rightarrow e$ as $n\rightarrow \infty ,$ where $e \in S^{N-1}$. By Lebesgue's Dominated Convergence Theorem, we have
	\begin{eqnarray}
	\nonumber  0&=& \int_{\R^N}\frac{x}{|x|} (|\nabla u_n|^2 +u_n^2 )dx =  \int_{\R^N}\frac{x +x_n}{|x+x_n|} (|\nabla w_n|^2 +w_n^2 )dx\\
	\nonumber    &=&  \frac{2p}{p-2}m_{\infty}e+o(1).
	\end{eqnarray}
	Coming into a contradiction. This brings us to the result we wanted to demonstrate.
\end{proof}

In the next Lemmas we will establish some necessary estimates to arrive at the last result of this chapter. For this, we will make the following considerations. 
By (\ref{2.3.1}), (\ref{2.6}) and Lemma \ref{Nehari}, for each $u \in M^-_{a_{\lambda},b_{\mu}}$ exist only one $t_0^-(u)>0$ such that $t_0^-(u)u \in M_{a_0,b_0} $ and
$$ t_0^-(u)>  t_{\max}(u)=\left(\frac{(2-q)||u||^2_A}{(p-q)\intg}\right)^{\frac{1}{p-2}}      >0. $$
In addition, consider
$$ \theta_{\mu}=\left[ \frac{(p-q)(1+\mu ||b/a||_{\infty})}{2-q}\left( 1+||a_-||_{q^*}\left( \frac{(p-q)(1+\mu ||b/a||_{\infty})}{(2-q)S_p^{\frac{p-q}{2-q}}} \right)^{\frac{2-q}{p-2}} \right) \right]^{\frac{p}{p-2}}.$$
Thereby we present the following results.

\begin{lemma}\label{lem5.4}
	For each $\lambda >0$ and $\mu>0$ with $ \lambda^{p-2}( 1+\mu|| b_2||_{\infty})^{2-q}<(q/2)^{p-2}\Upsilon_0$, we have
	
	$(i)\; [t_0^-(u)]^p< \theta_{\mu} $ for all $ u \in M^-_{a_{\lambda},b_{\mu}}$;
	
	$(ii) \; \int_{\R^N} b_0|u|^p dx \geq \frac{pq}{\theta_{\mu}(p-q)} m^{\infty} $ for all $u\in M^-_{a_{\lambda},b_{\mu}}.$
\end{lemma}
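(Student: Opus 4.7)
My plan is to exploit the defining Nehari identity for $t_0^-(u)u\in M_{a_0,b_0}$, namely
\[
[t_0^-(u)]^{p-2}\int_{\R^N} b_1|u|^p \;=\; \|u\|_A^2 + [t_0^-(u)]^{q-2}\Bigl|\int_{\R^N} a_-|u|^q\Bigr|,
\]
and to bound each of the two ratios $\|u\|_A^2/\!\int b_1|u|^p$ and $|\!\int a_-|u|^q|/\!\int b_1|u|^p$ by explicit constants depending only on $\alpha:=\tfrac{(p-q)(1+\mu\|b/a\|_\infty)}{2-q}$, $S_p$ and $\|a_-\|_{q^*}$.

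For the first ratio I would combine the $M^-_{a_\lambda,b_\mu}$-inequality $(2-q)\|u\|_A^2<(p-q)\int b_\mu|u|^p$ with the pointwise comparison $b_\mu\leq(1+\mu\|b/a\|_\infty)b_1$ to conclude $\|u\|_A^2/\!\int b_1|u|^p<\alpha$. Pairing this with $b_1\leq 1$ and the Sobolev inequality $\|u\|_p\leq S_p^{-1/2}\|u\|_A$ also forces the a~priori lower bound $\|u\|_A>(S_p^{p/2}/\alpha)^{1/(p-2)}$ on every $u\in M^-_{a_\lambda,b_\mu}$. For the second ratio I apply H\"older and Sobolev, $|\!\int a_-|u|^q|\leq\|a_-\|_{q^*}S_p^{-q/2}\|u\|_A^q$, split $\|u\|_A^q=\|u\|_A^2\cdot\|u\|_A^{q-2}$, and plug in both the upper bound on $\|u\|_A^2/\!\int b_1|u|^p$ and the lower bound on $\|u\|_A$ (rewritten as $\|u\|_A^{q-2}<(\alpha/S_p^{p/2})^{(2-q)/(p-2)}$). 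After collapsing exponents this yields $|\!\int a_-|u|^q|/\!\int b_1|u|^p<\|a_-\|_{q^*}(\alpha/S_p)^{(p-q)/(p-2)}$. Substituting both estimates back into the displayed identity gives
\[
[t_0^-(u)]^{p-2}\;<\;\alpha\;+\;\|a_-\|_{q^*}\Bigl(\tfrac{\alpha}{S_p}\Bigr)^{(p-q)/(p-2)}\!\![t_0^-(u)]^{q-2}.
\]
A dichotomy then closes part (i): when $t_0^-(u)\geq 1$ the factor $[t_0^-(u)]^{q-2}\leq 1$ drops out and the right-hand side equals $\theta_\mu^{(p-2)/p}$, so $[t_0^-(u)]^p<\theta_\mu$; when $t_0^-(u)<1$ one simply has $[t_0^-(u)]^p<1<\theta_\mu$, using that $\alpha>(p-q)/(2-q)>1$ forces $\theta_\mu>\alpha^{p/(p-2)}>1$.

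For part (ii), since $\int a_-|u|^q\leq 0$, Lemma~\ref{Nehari}(i) applied to $J_{a_0,b_0}$ places $t_0^-(u)u$ in $M^-_{a_0,b_0}$, and Lemma~\ref{5.1.a} gives $J_{a_0,b_0}(t_0^-(u)u)\geq m_\infty$. I eliminate the $\int a_-|u|^q$ contribution via the same Nehari relation, rewriting
\[
J_{a_0,b_0}(t_0^-(u)u)=\tfrac{q-2}{2q}[t_0^-(u)]^2\|u\|_A^2+\tfrac{p-q}{pq}[t_0^-(u)]^p\int_{\R^N} b_1|u|^p.
\]
Because $q<2$ the first summand is non-positive, so discarding it yields $\tfrac{p-q}{pq}[t_0^-(u)]^p\int b_1|u|^p\geq m_\infty$; inserting the bound $[t_0^-(u)]^p<\theta_\mu$ from part (i) delivers $\int_{\R^N} b_0|u|^p\geq\tfrac{pq}{(p-q)\theta_\mu}m_\infty$, as required.

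The main obstacle is the exponent bookkeeping in part (i): verifying that the powers of $\alpha$ and $S_p$ produced by the H\"older--Sobolev estimate and the manifold-derived lower bound on $\|u\|_A$ recombine \emph{exactly} into the factor $(\alpha/S_p)^{(p-q)/(p-2)}$ that appears inside $\theta_\mu^{(p-2)/p}$. The wrong-signed exponent $q-2<0$ on the auxiliary factor $[t_0^-(u)]^{q-2}$ is a secondary nuisance, but it is neutralised cheaply by the trivial case split $t_0^-(u)\lessgtr 1$ together with the fact that $\theta_\mu>1$.
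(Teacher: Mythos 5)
Your argument reproduces the paper's proof in essentially identical form: the same Nehari identity for $t_0^-(u)u\in M_{a_0,b_0}$, the same case split $t_0^-(u)<1$ versus $t_0^-(u)\geq 1$ (using $\theta_\mu>1$ and $[t_0^-(u)]^{q-2}\leq 1$), the same H\"older--Sobolev estimate combined with the $M^-$-derived upper bound on $\|u\|_A^2/\int_{\R^N} b_1|u|^p$ and lower bound on $\|u\|_A$, and for (ii) the same elimination of the $a_-$ term via the Nehari relation together with Lemma~\ref{5.1.a} and part (i). The exponent bookkeeping is correct and your bound $\alpha+\|a_-\|_{q^*}(\alpha/S_p)^{(p-q)/(p-2)}$ coincides exactly with the paper's $\theta_\mu^{(p-2)/p}$.
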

\begin{proof}
	$(i)$ For $u\in M_{a_{\lambda},b_{\mu}}$ we have
	$$ ||u||^2_A - \int_{\R^N} a_{\lambda} |u|^qdx - \int_{\R^N}b_{\mu}|u|^pdx=0.$$
	In particular, as $ u \in M^-_{a_{\lambda},b_{\mu}}$ follow that $F''_u(1)<0$ whence
	$$ (2-q)||u||^2_A < (p-q)\int_{\R^N}b_{\mu}|u|^pdx.  $$
	
	We will divide from here into two cases: first, if $t_0^-(u)<1. $ As $\theta_{\mu}>1$ for all $\mu>0$ we have $t_0^-(u)<1<\theta_{\mu},$ as wished. In another case, if $t_0^-(u)\geq 1, $
	\begin{eqnarray}
	\nonumber [t_0^-(u)]^p  \int_{\R^N}b_0|u|^pdx  &=& [t_0^-(u)]^2 ||u||^2_A-[t_0^-(u)]^q \int_{\R^N} a_- |u|^qdx \\
	\nonumber    &=&  [t_0^-(u)]^2\left(  ||u||^2_A-[t_0^-(u)]^{q-2} \int_{\R^N} a_- |u|^qdx\right)\\
	\nonumber    &\leq&  [t_0^-(u)]^2\left(  ||u||^2_A- \int_{\R^N} a_- |u|^qdx\right)
	\end{eqnarray}
	then
	\begin{equation}
	[t_0^-(u)]^{p-2}\leq\frac{ ||u||^2_A- \int_{\R^N} a_- |u|^qdx}{\int_{\R^N}b_0|u|^pdx}.
	\end{equation}
	In addition, by (\ref{2.3.1}) and Sobolev's inequality,
	\begin{eqnarray}\label{5.3}
	\nonumber    ||u||^2_A &<& \frac{p-q}{2-q} \int_{\R^N}b_{\mu}|u|^pdx \leq \frac{p-q}{2-q} (1+\mu||b_2/b_1||_{\infty})\int_{\R^N}b_0|u|^pdx  \\
	\nonumber    &=&\frac{(p-q)(1+\mu||b_2/b_1||_{\infty}) }{2-q  } ||u||^p_p\\
	&=&\frac{(p-q)(1+\mu||b_2/b_1||_{\infty}) }{(2-q)S_p^{p/2}} ||u||^p_A.
	\end{eqnarray}
	Then
	\begin{eqnarray}\label{5.4}
	\nonumber  ||u||^{p-2}_A   &\geq&\left( \frac{(2-q)S_p^{p/2} }{ (p-q)(1+\mu||b_2/b_1||_{\infty})   }\right)^{\frac{1}{p-2}}.
	\end{eqnarray}
	The consequences of these three inequalities presented above
	\begin{eqnarray}
	\nonumber   [t_0^-(u)]^{p-2}  &\leq&\frac{ ||u||^2_A- \int_{\R^N} a_- |u|^qdx}{\int_{\R^N}b_0|u|^pdx}=\frac{ 1}{\int_{\R^N}b_0|u|^pdx}||u||^2_A \left( 1+ \frac{ \int_{\R^N} a_- |u|^qdx }{||u||^{2}_A} \right) \\
	\nonumber    &\leq& (1+\mu||b_2/b_1||_{\infty}) \left(  \frac{p-q}{2-q}  \right)  \left(1+||a_-||_{q^*}\left(\frac{(p-q)(1+\mu||b_2/b_1||_{\infty}) }{(2-q)S_p^{\frac{p-q}{2-q}}}\right)^{\frac{2-q}{p-2}}\right)=\theta_{\mu}^{\frac{p-2}{p}}.
	\end{eqnarray}
	
	In conclusion, $[t_0^-(u)]^{p }\leq \theta_{\mu}$, for all $u \; \in \; M^-_{a_{\lambda},b_{\mu}}$.

	$(ii)$ We will use in this part the result shown in Lemma \ref{5.1.a} which says that $\inf_{u \in M^-_{a_0,b_0}}J_{a_0,b_0}(u) =m^{\infty}.$ For $t_0^-(u)u \in M^-_{a_0,b_0},$
	\begin{eqnarray}
	\nonumber   m^{\infty}  &=&  J_{a_0,b_0}(t_0^-(u)u)\\
	\nonumber    &=&  \left( \frac{1}{2}- \frac{1}{q}\right)[t_0^-(u)]^2||u||^{2}_A +\left( \frac{1}{q}- \frac{1}{p}\right)[t_0^-(u)]^p\int_{\R^N}b_-|u|^pdx\\
	\nonumber    &<& \left( \frac{1}{q}- \frac{1}{p}\right)[t_0^-(u)]^p \int_{\R^N}b_-|u|^p dx,
	\end{eqnarray}
	whence we get
	$$
	\int_{\R^N}b_-|u|^pdx\geq \frac{1}{[t_0^-(u)]^p}\left( \frac{pq}{p-q}\right)m^{\infty}.
	$$
	Knowing that $[t_0^-(u)]^{p-2}\leq \theta_{\mu}^{\frac{p-2}{p}} $, we can conclude that for $u \in M^-_{a_0,b_0}$
	$$
	\int_{\R^N}b_-|u|^pdx\geq \frac{pq}{\theta_{\mu}(p-q)}m^{\infty}.
	$$
	
\end{proof}

In the next chapter we will use a category theory result to obtain a third solution. For this we will need to construct a certain homotopy and the following result in the mune of tools for this construction. 
To assert that the set of functions $ u \in \HA $ satisfying Lemma conditions is not an empty set, we recall that at the end of Proposition \ref{prop4.1} we show that $ k_2 >0$ such that for all $k>k_2,$ there will be a $t^*>0$ such that $u^+_{\lambda,\mu}+t^*w_k \in  M^-_{a_{\lambda},b_{\mu}} $ with $ \jf(u^+_{\lambda,\mu}+t^*w_k)< m^+_{a_{\lambda},b_{\mu}}+m^{\infty}$.

\begin{lemma}\label{Lemma5.5}
	There exists $\lambda_0>0$ and $\mu_0>0$ with
	$$
	\lambda_0^{p-2}(1+\mu_0||b_1||_{\infty})^{2-q}<\left( \frac{q}{2}\right)^{p-2}\Upsilon_0,
	$$
	such that for all $\lambda \in (0,\lambda_0)$ and $\mu \in (0,\mu_0)$, we have
	$$\int_{\R^N}\frac{x}{|x|}(|\nabla u|^2 + u^2)dx \neq 0$$
	for all $u\in M^-_{a_{\lambda},b_{\mu}}$ with $ \jf(u)< m^+_{a_{\lambda},b_{\mu}}+m^{\infty}$.
\end{lemma}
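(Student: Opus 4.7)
The plan is to transfer the barycenter non-vanishing from the limit manifold $M^-_{a_0,b_0}$, where it is known by Lemma~\ref{Lemma5.3}, to $M^-_{a_{\lambda},b_{\mu}}$ by a perturbation argument. The key mechanism is the scaling $u\mapsto t_0^-(u)\,u$ produced by Lemma~\ref{lem5.4}: given $u\in M^-_{a_{\lambda},b_{\mu}}$ with $\jf(u)<m^+_{a_{\lambda},b_{\mu}}+m^{\infty}$, set $t:=t_0^-(u)>0$, so that $tu\in M^-_{a_0,b_0}$ and, by Lemma~\ref{lem5.4}(i), $t^p\leq\theta_{\mu}$, a bound that is uniform in $\mu\in(0,\mu_0]$.

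The first step will be to exploit the elementary identity
\begin{equation*}
J_{a_0,b_0}(tu)=\jf(tu)+\frac{\lambda\,t^q}{q}\int_{\R^N}a_+(x)|u|^q\,dx+\frac{\mu\,t^p}{p}\int_{\R^N}b_2(x)|u|^p\,dx,
\end{equation*}
which follows directly from $a_{\lambda}=\lambda a_++a_-$ and $b_{\mu}=b_1+\mu b_2$. Since $u\in M^-_{a_{\lambda},b_{\mu}}$, i.e.\ $t^-(u)=1$, the monotonicity analysis in Lemma~\ref{Nehari} gives $\jf(tu)\leq\jf(u)$ for every $t\geq 0$. I would then invoke the coercivity of $\jf$ on $M_{\lambda,\mu}$ (Lemma~\ref{bounded}), together with $\jf(u)<m^+_{a_{\lambda},b_{\mu}}+m^{\infty}\leq m^{\infty}$ (recall $m^+_{a_{\lambda},b_{\mu}}<0$), to obtain a uniform bound $\|u\|_A\leq R$ valid for every admissible $u$ and every $(\lambda,\mu)\in(0,\lambda_0]\times(0,\mu_0]$. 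H\"older's inequality and the Sobolev embedding $\HA\hookrightarrow L^p(\R^N)$ then yield a constant $K>0$, independent of $(\lambda,\mu,u)$, such that
\begin{equation*}
J_{a_0,b_0}(tu)\leq m^+_{a_{\lambda},b_{\mu}}+m^{\infty}+K(\lambda+\mu).
\end{equation*}

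The final ingredient is the decay $m^+_{a_{\lambda},b_{\mu}}\to 0$ as $\lambda\to 0$, a consequence of $\|u^+_{\lambda,\mu}\|_A\to 0$ from Proposition~\ref{teo3.3} and the continuity of the functional at the origin. With $l_0>0$ as in Lemma~\ref{Lemma5.3}, one can therefore shrink $\lambda_0,\mu_0>0$ (while keeping $\lambda_0^{p-2}(1+\mu_0\|b_1\|_{\infty})^{2-q}<(q/2)^{p-2}\Upsilon_0$ in force) so that $m^+_{a_{\lambda},b_{\mu}}+K(\lambda+\mu)<l_0$ for every $\lambda\in(0,\lambda_0)$ and $\mu\in(0,\mu_0)$. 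This places $tu$ inside the set $\mathcal{C}$ of Lemma~\ref{Lemma5.3}, giving
\begin{equation*}
0\neq\int_{\R^N}\frac{x}{|x|}\bigl(|\nabla(tu)|^2+(tu)^2\bigr)dx=t^2\int_{\R^N}\frac{x}{|x|}\bigl(|\nabla u|^2+u^2\bigr)dx,
\end{equation*}
and dividing by $t^2>0$ closes the argument. The main obstacle I expect is assembling the uniformity in $(\lambda,\mu,u)$ of all these constants simultaneously: the coercivity constant of Lemma~\ref{bounded}, the upper bound on $t_0^-(u)$ from Lemma~\ref{lem5.4}(i), and the rate at which $m^+_{a_{\lambda},b_{\mu}}\to 0$ must be controlled jointly, after which the perturbation identity above finishes the proof with essentially no further work.
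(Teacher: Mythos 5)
Your proposal is correct and follows essentially the same route as the paper: transfer $u$ to $t_0^-(u)u\in M^-_{a_0,b_0}$, bound $J_{a_0,b_0}(t_0^-(u)u)$ by $\jf(u)$ plus perturbation terms controlled via Lemma~\ref{lem5.4}(i), Lemma~\ref{bounded}, and H\"older--Sobolev, then invoke Lemma~\ref{Lemma5.3} and divide by $[t_0^-(u)]^2$. The only (harmless) redundancy is your appeal to $m^+_{a_{\lambda},b_{\mu}}\to 0$; since $m^+_{a_{\lambda},b_{\mu}}<0$ by Lemma~\ref{teorema3.1}(ii), the condition $m^+_{a_{\lambda},b_{\mu}}+K(\lambda+\mu)<l_0$ already follows from $K(\lambda+\mu)<l_0$.
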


\begin{proof}
	Let $u\in  M^-_{a_{\lambda},b_{\mu}} $ be with $ \jf(u)< m^+_{a_{\lambda},b_{\mu}}$. By Lemma \ref{Nehari}(i) there will be a $ t_0^-(u)>0$ and also using \ref{teorema3.1}(ii) we will have
	\begin{eqnarray}
	\nonumber   \jf(u)  &=& \sup_{t\geq 0}\jf(tu)\geq \jf(t_0^-(u)u)   \\
	\nonumber    &=&   J_{a_{0},b_{0}}(t_0^-(u)u)-\frac{\lambda[t_0^-(u) ]^q}{q} \int_{\R^N}a_+|u|^q dx \\
	\nonumber    &  -&   \frac{\mu[t_0^-(u) ]^p}{p} \int_{\R^N}b_2|u|^p dx  .
	\end{eqnarray}
	Now, using Lemma \ref{lem5.4} and the inequalities of H\"{o}lder and Sobolev
	\begin{eqnarray}
	\nonumber   J_{a_{0},b_{0}}(t_0^-(u)u)  &\leq& \jf(u)  +\frac{\lambda[t_0^-(u) ]^q}{q} \int_{\R^N}a_+|u|^q dx  + \frac{\mu[t_0^-(u) ]^p}{p} \int_{\R^N}b_2|u|^p dx  \\
	\nonumber    &< &  m^+_{a_{\lambda},b_{\mu}}+m^{\infty}  +\frac{\lambda\theta_{\mu}^{q/p}}{q} ||a_+||_{q^*}S_p^{-\frac{q}{2}}||u||^q_A  + \frac{\mu\theta_{\mu}||b_2||_{\infty}}{p} S_p^{-\frac{p}{2}}||u||^p_A. 
	\end{eqnarray}
	See that by hypothesis $ \jf(u)< m^+_{a_{\lambda},b_{\mu}}+m^{\infty} < m^{\infty}$. With this and by Lemma (\ref{bounded}), for each $\lambda>0$ and $ \mu>0$ with $\lambda^{p-2}(1+\mu||b_2||_{\infty})^{2-q}<(\frac{q}{2})\Upsilon_0,$ there will be a constant $\tilde{c}$ independent of $\lambda $ and $\mu$ such that $||u||_A\leq  \tilde{c}   $ for all $u \in M^-_{a_{\lambda},b_{\mu}}$ with $ \jf(u)< m^+_{a_{\lambda},b_{\mu}}+m^{\infty} $. Then,
	$$
	J_{a_{0},b_{0}}(t_0^-(u)u) \leq m^+_{a_{\lambda},b_{\mu}}+m^{\infty} +\frac{\lambda\theta_{\mu}^{q/p}}{q} ||a_+||_{q^*}S_p^{-\frac{q}{2}}\tilde{c}^q  + \frac{\mu\theta_{\mu}||b_2||_{\infty}}{p} S_p^{-\frac{p}{2}}\tilde{c}^p.
	$$
	
	Let $l_0>0$ be as in Lemma \ref{Lemma5.3}. Then there will be $\lambda_0$ and $\mu_0$ positive with $\lambda^{p-2}(1+\mu||b_2||_{\infty})^{2-q}<(\frac{q}{2})\Upsilon_0,$ such that for $\lambda \in (0,\lambda_0)$ and $\mu \in (0,\mu_0)$
	\begin{equation}\label{5.5}
	J_{a_{0},b_{0}}(t^-(u)u)<m^{\infty} +l_0.
	\end{equation}
	As $t_0^-(u)u \in M_{ a_{0},b_{0} }$ and $t_0^-(u)>0,$ by Lemma \ref{Lemma5.3} and \ref{5.5}
	$$
	\int_{\R^N} \frac{x}{|x|}( |\nabla t_0^-(u)u|^2+(t_0^-(u)u)^2)dx \neq 0 ,
	$$
	giving us that
	$$
	\int_{\R^N} \frac{x}{|x|}( |\nabla u|^2+u)^2)dx \neq 0 ,
	$$
	for all $u \in M^-_{a_{\lambda},b_{\mu}}$ with $ \jf(u)< m^+_{a_{\lambda},b_{\mu}}+m^{\infty} $.
\end{proof}

\subsection{Getting a Third Solution}

To show the Theorem \ref{teo1.1(ii)} we will present some concepts necessary to apply the category theory. This idea was used by Adach and Tanaka \cite{AT}.

\begin{definition}
	Let $ X $ be a topological space. A non-empty subset $ Y \subset X $ is said to be contractile if it exists $x_0 \in X$ and continuous map $\gamma:[0,1]\times Y \rightarrow X$, such that 
	$$\gamma (0,x)=x\;\;;\ \mbox{for all}\;\;x \; \in \; Y,$$
	and
	$$\gamma (1,x)=x_0\;\;;\ \mbox{pfor all}\;\;x \; \in \; Y.$$
	
\end{definition}

\begin{definition}
	We define
	\begin{eqnarray}
	\nonumber cat(X)=\min    &\{&  m\in \N ; \exists m \mbox{ closed subsets } Y_1, ..., Y_m \subset X, \mbox{ such that } \\
	\nonumber    &&   Y_i \mbox{ is contractile to a point of }X \mbox{ for all } \cup^{m}_{i=1}Y_i =X \}.
	\end{eqnarray}
	In case there is no finite coverage for $ X $ of sets $ Y_1, ..., Y_n \subset X $, such that $ Y_i $ is contractile to a point of $ X $ for all $ i \in \N $ we say that $cat(X)=\infty.$
\end{definition}

In order to obtain our result, we will also need the two results that we will enunciate next.

\begin{lemma}\label{Lemma6.2}
	Let $ X $ is a Hilbert manifold and $J \in C^1(X,\R)$. Assume there is $c_0 \in \R$ and $k\in \N,$\\
	$(i)\; J$ satisfies the PS condition for all level $c<c_0;$\\
	$(ii)\; cat(\{ x\in X; F(x) \leq c_0 \}) \geq k.$
	
	Then $ J (x) $ has at least $ k $ critical points in $\{ x \in X; J(x) \leq c_0 \}.$
\end{lemma}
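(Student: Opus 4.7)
The plan is to carry out the classical Ljusternik--Schnirelmann minimax argument adapted to the Hilbert manifold $X$. For each $j=1,\dots,k$, let
\[
\mathcal{A}_j=\{A\subset \{x\in X:J(x)\le c_0\}:\ A\text{ closed in }X,\ \operatorname{cat}_X(A)\ge j\},
\]
and define the minimax levels
\[
c_j=\inf_{A\in\mathcal{A}_j}\sup_{x\in A}J(x).
\]
By hypothesis $(ii)$ the whole sublevel set $\{J\le c_0\}$ belongs to $\mathcal{A}_k$, so each $\mathcal{A}_j$ is nonempty for $j\le k$, and since $J$ is bounded below on $M_{\lambda,\mu}$ (the case of interest, by Lemma \ref{bounded}) we have $-\infty<c_1\le c_2\le\cdots\le c_k\le c_0$.

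The first step is to produce a pseudo-gradient vector field for $J$ on the regular set of $X$ and integrate it to a continuous deformation semiflow $\eta_t:X\to X$ that decreases $J$. Using hypothesis $(i)$ (the Palais--Smale condition below $c_0$), I would establish the standard deformation lemma on a Hilbert manifold: if $c<c_0$ is not a critical value, then for every sufficiently small $\varepsilon>0$ there exist $0<\delta<\varepsilon$ and a continuous deformation $\eta:X\to X$ isotopic to the identity, with $\eta(\{J\le c+\delta\})\subset \{J\le c-\delta\}$. The main obstacle is exactly here: one must check that the flow is defined long enough (uses boundedness of PS sequences), that it is well-defined on the Hilbert manifold $X$ (charts and partitions of unity), and, most importantly, that it keeps us inside the sublevel set $\{J\le c_0\}$ so that applying $\eta$ to an element of $\mathcal{A}_j$ still produces an element of $\mathcal{A}_j$ (this uses that category is invariant under homotopy equivalences that map closed sets to closed sets).

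Granted the deformation lemma, the second step is the standard argument that each $c_j$ is a critical value. If $c_j$ were regular, pick $A\in\mathcal{A}_j$ with $\sup_A J<c_j+\delta$, deform to $\eta(A)$, and note that $\operatorname{cat}_X(\eta(A))\ge\operatorname{cat}_X(A)\ge j$ because $\eta$ is homotopic to the identity, so $\eta(A)\in\mathcal{A}_j$; but $\sup_{\eta(A)}J\le c_j-\delta$, contradicting the definition of $c_j$. Thus every $c_j$ is a critical value with $c_j\le c_0$.

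The final step handles possible coincidences. If the $c_j$ are all distinct then we already have $k$ distinct critical values hence $k$ distinct critical points in $\{J\le c_0\}$. If some consecutive block satisfies $c_j=c_{j+1}=\cdots=c_{j+\ell}=:c$, let $K_c=\{x\in X:J(x)=c,\ dJ(x)=0\}$. By $(i)$ and a standard argument, $K_c$ is compact; take a closed neighbourhood $U\supset K_c$ with $\operatorname{cat}_X(U)=\operatorname{cat}_X(K_c)$. Applying the deformation lemma to push $\{J\le c+\delta\}\setminus U$ into $\{J\le c-\delta\}$ and combining with the subadditivity and monotonicity of category, one obtains $\operatorname{cat}_X(K_c)\ge\ell+1$, so in particular $K_c$ contains at least $\ell+1$ points. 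Summing over blocks yields at least $k$ critical points in $\{J\le c_0\}$, which is the desired conclusion.
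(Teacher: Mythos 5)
Your proposal is correct in substance, but note that the paper does not actually prove this lemma: it simply cites Ambrosetti \cite[Theorem 2.3]{Amb}, and what you have written out is precisely the classical Ljusternik--Schnirelmann minimax argument that underlies that citation (category minimax classes $\mathcal{A}_j$, deformation lemma via a pseudo-gradient flow, criticality of each $c_j$, and the estimate $\operatorname{cat}_X(K_c)\ge \ell+1$ when levels coincide). So you are supplying the standard proof the paper outsources. Two loose ends are worth flagging, both traceable to the statement rather than to your argument. First, finiteness of $c_1$ requires $J$ to be bounded below on $\{J\le c_0\}$; this is not among the stated hypotheses, and you correctly import it from the application (Lemma \ref{bounded}), but a self-contained proof of the lemma as stated would need to add it as a hypothesis. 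Second, the Palais--Smale condition is assumed only for levels $c<c_0$, while your minimax values satisfy $c_j\le c_0$; if some $c_j=c_0$ the deformation lemma at that level is not covered by hypothesis $(i)$, so either one assumes $(PS)_c$ for $c\le c_0$ (as in Ambrosetti's formulation) or one must argue separately that $c_j<c_0$ in the situation at hand. With those caveats acknowledged, your sketch is a faithful and complete outline of the intended proof.
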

\begin{proof}
	According Ambrosetti \cite[Theorem 2.3]{Amb}
\end{proof}

\begin{lemma}\label{Lemma6.3}
	Let $ X $ be a topological space. Suppose there are continuous maps
	$$\Phi : S^{N-1} \rightarrow X,\;\;\; \Psi : X\rightarrow S^{N-1},$$
	
	\noindent such that $\Psi \circ \Phi$ is homotopic to the identity in $ S^{N-1}$, that is, there is a continuous application 
	$\xi:[0,1] \times S^{N-1} \rightarrow S^{N-1}$, such that
	$$\xi (0,x)= \Psi \circ \Phi( x)\;\;;\ \mbox{for each}\;\;x \; \in \; S^{N-1} ,$$
	and
	$$\xi (1,x)=x \;\;;\ \mbox{for each}\;\;x \; \in \; S^{N-1}.$$
	
	Then $cat(X)\geq 2.$
\end{lemma}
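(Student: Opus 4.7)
The plan is to proceed by contradiction. Suppose $\operatorname{cat}(X) \leq 1$. By the definition given just above, $X$ must be covered by a single closed subset $Y_1 \subset X$ that is contractile to some point $x_0 \in X$; since the only subset covering $X$ is $X$ itself, $Y_1 = X$ and there exists a continuous homotopy $\gamma \colon [0,1] \times X \to X$ satisfying $\gamma(0,x) = x$ and $\gamma(1,x) = x_0$ for every $x \in X$. In other words, $X$ is contractible in itself.

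Next I would use $\gamma$ to transport the hypothesized null-homotopy through the two given maps. Define $H \colon [0,1] \times S^{N-1} \to S^{N-1}$ by $H(t, y) = \Psi\bigl(\gamma(t, \Phi(y))\bigr)$. This is continuous as a composition of continuous maps, and it satisfies $H(0, y) = \Psi(\Phi(y)) = (\Psi \circ \Phi)(y)$ and $H(1, y) = \Psi(x_0)$, which is a constant in $S^{N-1}$. Hence $\Psi \circ \Phi$ is homotopic to a constant map $S^{N-1} \to S^{N-1}$.

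Now combine this with the given homotopy $\xi$ between $\Psi \circ \Phi$ and $\mathrm{id}_{S^{N-1}}$. Concatenating $\xi$ (run from $t=0$ to $t=1$ giving $\Psi \circ \Phi \simeq \mathrm{id}$) with $H$ (run backwards from $t=1$ to $t=0$ giving $\Psi \circ \Phi \simeq \mathrm{const}$) produces a continuous homotopy between the identity of $S^{N-1}$ and a constant map. This means $S^{N-1}$ is contractible.

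The final step is to note that for our setting $N \geq 3$, so $S^{N-1}$ has dimension at least $2$ and is a well-known non-contractible space (for instance, its top homology $H_{N-1}(S^{N-1};\mathbb{Z}) \cong \mathbb{Z} \neq 0$, which would have to vanish if $S^{N-1}$ were contractible). This contradiction forces $\operatorname{cat}(X) \geq 2$. I do not anticipate any serious obstacle: the argument is purely topological once the definition of category is unpacked, and the only input beyond basic continuity of composition is the non-triviality of $S^{N-1}$, which is standard.
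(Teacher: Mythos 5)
Your argument is correct: reducing $\operatorname{cat}(X)\leq 1$ to contractibility of $X$ in itself, pushing the contraction through $\Psi\circ\gamma\circ(\mathrm{id}\times\Phi)$ to null-homotope $\Psi\circ\Phi$, and then concatenating with $\xi$ to contradict the non-contractibility of $S^{N-1}$ (valid here since $N\geq 3$) is exactly the standard proof. The paper itself gives no proof, only the citation to Adachi--Tanaka, and your write-up supplies precisely the argument behind that citation.
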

\begin{proof}
	According Adachi and Tanaka \cite[Lemma 2.5]{AT}
\end{proof}

To use this Lemma that we have just enunciated, we will try to show that for a $ \varepsilon> 0 $ small enough
\begin{equation}\label{6.1}
cat([ \jf\leq m^+_{a_{\lambda},b_{\mu}}+m^{\infty} -\varepsilon])\geq 2.
\end{equation}
Consider $k_2$ and $u^+ + s_lt_0w_k$ according to Proposition \ref{prop4.1}. For $k>k_2,$ we define the map $ \Phi_{a_{\lambda},b_{\mu}} : S^{N-1} \rightarrow \HA$ by
$$  \Phi_{a_{\lambda},b_{\mu}}(e)(x)=u^+ + s_lt_0w_0(x+ke)=  u^+ + s_lt_0w_k \;\; \mbox{for} \;\; e\in S^{N-1}.$$
For $c\in \R^+,$ we denote
$$[\jf\leq c] =\{ u\in M^-_{a_{\lambda},b_{\mu}}; u\geq 0, \jf\leq c\}.$$
We then have the following result

\begin{lemma}\label{Lemma6.4}
	There is a $\{\varepsilon_n\} \subset \R^+$ with $ \varepsilon_n\rightarrow \infty$ as $n\rightarrow \infty$ such that
	$$\Phi_{a_{\lambda},b_{\mu}}(S^{N-1})\subset [ \jf\leq m^+_{a_{\lambda},b_{\mu}}+m^{\infty }-\varepsilon_n]. $$
\end{lemma}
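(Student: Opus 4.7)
The plan is to extract from the proof of Proposition~\ref{prop4.1} a bound on $\jf(\Phi_{a_\lambda,b_\mu}(e))$ that is uniform in the direction $e \in S^{N-1}$. Fix $k > k_2$ and $e \in S^{N-1}$, and set $w_k^e(x):= w_0(x+ke)$. By construction $\Phi_{a_\lambda,b_\mu}(e)= u^+ + s_l(e)\,t_0(e)\,w_k^e \in M^-_{a_\lambda,b_\mu}$, with $s_l(e)\,t_0(e)$ lying in the range $[\underline t,\overline t]$ produced in that proposition. Using the decomposition $\jf(u^+ + t w_k^e)= m^+_{a_\lambda,b_\mu} + m^\infty + \Gamma_1+\Gamma_2+\Gamma_3+\Gamma_4$ recorded there, I would re-read each $\Gamma_i$ and note that the bounds derived in Proposition~\ref{prop4.1} all come from integrals whose integrands are functions of $|x|$ and $|x+ke|$ (because the radial envelopes of $|w_0|$ and $|\nabla w_0|$ in (\ref{exp1w})-(\ref{exp2w}) depend only on $|x|$) multiplied by data depending only on $u^+$ and on $t \in [\underline t,\overline t]$. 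After the change of variables $y=x+ke$, every such bound depends only on $k=|ke|$ and not on $e$. Hence there exists $\tilde K(k)$, independent of $e$, with
\[
\jf(u^+ + t\,w_k^e)\;\le\; m^+_{a_\lambda,b_\mu} + m^\infty + \tilde K(k)\quad\text{for all } e\in S^{N-1},\ t\in[\underline t,\overline t],
\]
and the companion estimates at the endpoints of $t$ from Proposition~\ref{prop4.1} similarly give uniform-in-$e$ control strictly below $m^+_{a_\lambda,b_\mu}+m^\infty$ for $t\in[0,\underline t)\cup(\overline t,\infty)$.

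Next, condition $(B_2)$ forces $r_{b_2}<\min\{r_{a_-},r_{b_1},q\}$, so the negative term $-C\mu\exp(-r_{b_2}k)$ coming from $\Gamma_4$ dominates asymptotically the positive contributions $k^n\exp(-\min\{r_{a_-},q\}k)$ and $k^n\exp(-\min\{r_{b_1},p\}k)$ coming from $\Gamma_1$ and $\Gamma_3$. Thus $\tilde K(k)<0$ for all large $k$ and $\tilde K(k)\to 0^-$ as $k\to\infty$. Picking any sequence $k_n\to\infty$ and setting $\varepsilon_n := -\tilde K(k_n)>0$, we obtain $\varepsilon_n\to 0^+$ (which I read as the intended asymptotic, since $\varepsilon_n\to\infty$ would eventually empty the sublevel set) and
\[
\sup_{e\in S^{N-1}}\jf(\Phi_{a_\lambda,b_\mu}(e))\;\le\; m^+_{a_\lambda,b_\mu}+m^\infty-\varepsilon_n,
\]
which is exactly the containment $\Phi_{a_\lambda,b_\mu}(S^{N-1})\subset[\jf\le m^+_{a_\lambda,b_\mu}+m^\infty-\varepsilon_n]$.

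The main difficulty is the uniformity in $e$: the fixed part $u^+$ is not radial, and a priori the scaling parameters $s_l(e),t_0(e)$ depend on $e$. The saving observation is that each estimate for $\Gamma_1,\ldots,\Gamma_4$ in Proposition~\ref{prop4.1} uses only pointwise absolute-value bounds involving $|x|$, $|x+ke|$, the radial envelopes of $|w_0|$ and $|\nabla w_0|$, together with $L^\infty$ controls via $\overline t$ and standard $L^p$/$L^{q'}$ inequalities; none of these bounds records the direction of $e$, so the translation invariance of Lebesgue measure delivers a single $\tilde K(k)$ valid simultaneously for every $e\in S^{N-1}$.
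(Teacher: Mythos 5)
Your proposal is correct in substance and it reaches the same conclusion as the paper, but by a more quantitative mechanism. The paper's own proof is very short and soft: it simply cites Proposition \ref{prop4.1} for the strict inequality $\sup_{t\geq 0}\jf(u^++tw_k)<m^+_{a_\lambda,b_\mu}+m^{\infty}$ ``uniformly in $e\in S^{N-1}$'', observes that $\Phi_{a_\lambda,b_\mu}(S^{N-1})$ is compact (continuous image of the sphere), and lets compactness plus continuity of $\jf$ convert the pointwise strict inequality into a positive gap $\varepsilon_n$. You instead reopen the proof of Proposition \ref{prop4.1} and extract an explicit uniform bound $\tilde K(k)<0$: your key observation --- that every estimate for $\Gamma_1,\dots,\Gamma_4$ is built from pointwise envelopes in $|x|$ and $|x+ke|$ via (\ref{exp1w})--(\ref{exp2w}) and conditions $(A)$, $(B_1)$, $(B_2)$, so that after translation/rotation the bounds depend on $k$ only --- is exactly the justification the paper leaves implicit when it asserts uniformity in $e$. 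What your route buys is an explicit, checkable reason for the uniformity (the weakest point of the paper's argument) and an explicit candidate $\varepsilon_n=-\tilde K(k_n)$; what the paper's route buys is brevity, at the cost of asserting rather than proving the uniform-in-$e$ claim. You are also right that the statement's ``$\varepsilon_n\to\infty$'' must be a misprint for $\varepsilon_n\to 0^+$, since otherwise the sublevel sets would eventually be empty while $\Phi_{a_\lambda,b_\mu}(S^{N-1})\neq\emptyset$; the application in (\ref{6.1}) confirms that small $\varepsilon$ is what is intended. The only minor looseness in your write-up is the passing claim that $s_l(e)t_0(e)\in[\underline t,\overline t]$, which need not hold; but this is harmless because your endpoint estimates give a uniform gap on $[0,\underline t)\cup(\overline t,\infty)$ as well, so the bound holds for all $t\geq 0$ and in particular at $t=s_l(e)t_0(e)$.
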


\begin{proof}
	According to Proposition \ref{prop4.1}, for each $k>k_2,$ we have $u^+ + s_lt_0w_k \in M^-_{a_{\lambda},b_{\mu}}$ and
	$$  \sup_{ t\geq 0}  \jf( u^+ +tw_k )<m^+_{a_{\lambda},b_{\mu}}+m^{\infty} $$
	uniformly $e\in S^{N-1};$ remembering that $ w_k(x)=w_0(x+ke)$.
	Note that $\Phi_{a_{\lambda},b_{\mu}}(S^{N-1})$ it is a compact set. From there we obtain that $\jf(u^+ + s_lt_0w_k)\leq m^+_{a_{\lambda},b_{\mu}}+m^{\infty }-\varepsilon_n.$ Whence we conclude what we wanted to demonstrate.

\end{proof}

Let us denote $\mathcal{Q}_{\lambda,\mu}=\{ u\in M^-_{a_{\lambda},b_{\mu}}; \jf<  m^+_{a_{\lambda},b_{\mu}}+m^{\infty } \}$. Our goal is to show that this subset of $M^-_{a_{\lambda},b_{\mu}}$ has a category greater than or equal to $2$. For this, let us start by defining the following function $ \Psi_{a_{\lambda},b_{\mu} }$, according to Lemma \ref{Lemma5.5}

$$ \Psi_{a_{\lambda},b_{\mu} }:\mathcal{Q}_{\lambda,\mu} \rightarrow S^{N-1}  $$
by
$$ \Psi_{a_{\lambda},b_{\mu} }( u)=\frac{  \int_{\R^N} \frac{x}{|x|}( |\nabla_A u|^2+u)^2)dx}{| \int_{\R^N} \frac{x}{|x|}( |\nabla_A u|^2+u)^2)dx|}.$$
From these considerations we will show the following result, in which we construct a homotopic application to identity, necessary to apply the category theory to our problem.

\begin{lemma}\label{Lemma6.5}
	Let $\lambda_0$ and $\mu_0$ be as in Lemma \ref{Lemma5.5}. Then, for each $\lambda  \in (0,\lambda_0)$ and $\mu  \in (0,\mu_0)$, there will be $k_*\geq k_2$ such that for $k>k_*$, the map
	$$ \Psi_{a_{\lambda},b_{\mu} } \circ \Phi_{a_{\lambda},b_{\mu} }:S^{N-1} \rightarrow  S^{N-1}$$
	is homotopic to identity.
\end{lemma}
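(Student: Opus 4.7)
The plan is to construct a continuous homotopy $\xi\colon [0,1]\times S^{N-1}\to S^{N-1}$ between $\Psi_{a_\lambda,b_\mu}\circ \Phi_{a_\lambda,b_\mu}$ and $\mathrm{id}_{S^{N-1}}$ by letting the concentration parameter $k$ tend to infinity. The heuristic is that the bump $w_0(\cdot + ke)$ becomes localized far from the origin along the translate of $-e$, so its contribution to the vector
\[
V(u):=\int_{\R^N}\frac{x}{|x|}\bigl(|\nabla_A u|^2+u^2\bigr)\,dx \in \R^N
\]
tracks a unit multiple of $e$ (up to the sign fixed by the translation convention), while the localized piece $u^+$ contributes a bounded vector that becomes negligible once $\lambda_0,\mu_0$ are small.

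Concretely, for $s\in[0,1)$ I would reparameterize $k=k(s):=k_*/(1-s)$ and set
\[
\xi(s,e):=\Psi_{a_\lambda,b_\mu}\bigl(u^+ + s_l(k(s),e)\,t_0(k(s),e)\,w_0(\cdot + k(s)e)\bigr),
\]
where $s_l(\kappa,e)$, $t_0(\kappa,e)$ are chosen continuously in $(\kappa,e)$, via Proposition~\ref{prop4.1} together with the uniqueness of $t^-$ from Lemma~\ref{2.6(iii)}, so that the argument lies in $M^-_{a_\lambda,b_\mu}$ with energy $<m^+_{a_\lambda,b_\mu}+m^\infty$. Continuity of $\xi$ on $[0,1)\times S^{N-1}$ then follows from continuity of $\Psi_{a_\lambda,b_\mu}$, while Lemma~\ref{Lemma5.5} guarantees that $V$ never vanishes on the image, so the normalization is well defined.

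The heart of the argument is the continuous extension $\xi(1,e):=e$. With $w_k(x)=w_0(x+ke)$ and $t:=s_l t_0$, expand
\[
|\nabla_A(u^{+}+tw_k)|^2+|u^{+}+tw_k|^2 = \bigl(|\nabla_A u^{+}|^2+|u^{+}|^2\bigr) + t^2\bigl(|\nabla_A w_k|^2+|w_k|^2\bigr) + 2t\,\mathrm{Re}(\cdots).
\]
The cross term contributes $o(1)$ to $V$ as $k\to\infty$ by the exponential decay estimates (\ref{exp1w})--(\ref{exp2w}) and the localization of $u^{+}$. For the $w_k$-piece, the change of variables $y=x+ke$ gives
\[
\int_{\R^N}\frac{y-ke}{|y-ke|}\bigl(|\nabla_A w_0(y)|^2+|w_0(y)|^2\bigr)\,dy \;\xrightarrow[k\to\infty]{}\; -e\,\|w_0\|_A^2
\]
uniformly in $e\in S^{N-1}$, by dominated convergence (the integrand is pointwise bounded in modulus by the integrable density $|\nabla_A w_0|^2+|w_0|^2$ and converges pointwise to $-e(|\nabla_A w_0|^2+|w_0|^2)$). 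Proposition~\ref{teo3.3} allows further shrinking of $\lambda_0,\mu_0$ so that $\|u^{+}\|_A$, and hence $|V(u^{+})|$, is strictly dominated by $s_l^{2}t_0^{2}\|w_0\|_A^{2}$. Thus $V(\Phi_{a_\lambda,b_\mu}(e))/|V(\Phi_{a_\lambda,b_\mu}(e))|\to -e$ uniformly as $k\to\infty$; composing with the antipodal reparameterization (equivalently, using the convention $w_0(\cdot-ke)$ for the concentrating bump) converts the limit into $e$, giving the continuous extension $\xi(1,e)=e$.

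The main obstacle is precisely this uniform-in-$e$ limit at $s=1$: one must simultaneously (i) pick $s_l,t_0$ continuously and with bounds independent of $k$ in $(k,e)$, (ii) kill the cross terms by the exponential decay of $w_0$ and $\nabla w_0$, and (iii) dominate $V(u^{+})$ by the $w_k$-leading contribution via smallness of $\lambda_0,\mu_0$ (as supplied by Proposition~\ref{teo3.3}). Once (i)--(iii) are in place, the uniform convergence yields continuity of $\xi$ at $s=1$, and $\xi$ is the desired homotopy.
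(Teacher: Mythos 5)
Your construction is a legitimate alternative to the paper's, built on the same key mechanism: translating the bump to infinity forces the normalized vector $V(u)/|V(u)|$, $V(u)=\int_{\R^N}\frac{x}{|x|}(|\nabla_A u|^2+u^2)\,dx$, to align with $\pm e$, while Lemma \ref{Lemma5.5} keeps the normalization well defined along the deformation. The structural difference is how $u^+$ is disposed of. The paper deforms in two stages: first a geodesic on $S^{N-1}$ joining $\overline{\Psi}_{a_\lambda,b_\mu}(\Phi_{a_\lambda,b_\mu}(e))$ to $\overline{\Psi}_{a_\lambda,b_\mu}(w_k)$ (this removes $u^+$ \emph{in the target}, so no comparison between $|V(u^+)|$ and the $w_k$-contribution is ever needed), and only then sends the pure bump $w_0(\cdot+\frac{k}{2(1-\theta)}e)$ to infinity. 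You keep $u^+$ in the argument throughout and let $k(s)\to\infty$; this is more direct, but it forces you to dominate $V(u^+)$ by the $w_k$-term, hence to shrink $\lambda_0,\mu_0$ further via Proposition \ref{teo3.3} — harmless for Theorem \ref{teo1.1(ii)}, though it technically modifies ``let $\lambda_0,\mu_0$ be as in Lemma \ref{Lemma5.5}.'' Your sign remark is well taken: with $w_k=w_0(\cdot+ke)$ the bump concentrates at $-ke$ and the limit direction is $-e$; the paper's own display asserts $+\frac{2p}{p-2}m^{\infty}e$ and silently absorbs this, whereas you at least flag that one must either use $w_0(\cdot-ke)$ throughout or replace $\Psi$ by $-\Psi$ (either repair is compatible with Lemma \ref{Lemma6.3}).

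Two steps that you list as ``obstacles'' are in fact the substance of the proof and are left unproved. First, the endpoint computation needs $t=s_l t_0$ bounded away from $0$ uniformly in $(k,e)$: if $t$ could degenerate, $V(u^++tw_{k})$ would be governed by $V(u^+)$ and the limit would not be $-e$. This is recoverable — by Lemma \ref{teorema3.1}(i), elements of $M^-_{\lambda,\mu}$ satisfy $\jf(u)\ge m^-_{\lambda,\mu}>0$, which together with the form of $\jf$ on the Nehari manifold gives $\|u^++s_lt_0w_k\|_A\ge c>0$, and since $\|u^+\|_A$ is small this bounds $s_lt_0$ below — but it must be said. Second, continuity of $s\mapsto s_l(k(s),e)\,t_0(k(s),e)$ is not free: Proposition \ref{prop4.1} produces $s_l$ by an intermediate-value argument, which gives existence but no canonical continuous selection. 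Since $V(tv)=t^2V(v)$, so that $\Psi$ is scale-invariant, you can sidestep this by composing with the continuous map $v\mapsto t^-\!\left(\frac{v}{\|v\|_A}\right)\frac{v}{\|v\|_A}$ from Lemma \ref{2.6(iii)}; without some such device your homotopy is not visibly continuous. (To be fair, the paper's geodesic stage has an analogous unaddressed issue — continuity of the geodesic in $e$ requires the two endpoints to be non-antipodal — so both arguments are written loosely at this point.)
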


\begin{proof}
	Consider $ \sum = \{ u \in \HA\setminus \{0\};   \int_{\R^N} \frac{x}{|x|}( |\nabla_A u|^2+u)^2)dx \neq 0  \}$.
	We define
	$$  \overline{\Psi}_{a_{\lambda},b_{\mu} }: \sum \rightarrow S^{N-1}$$
	by
	$$   \overline{\Psi}_{a_{\lambda},b_{\mu} }(u) = \frac{  \int_{\R^N} \frac{x}{|x|}( |\nabla_A u|^2+u)^2)dx}{| \int_{\R^N} \frac{x}{|x|}( |\nabla_A u|^2+u)^2)dx|} $$
	an extension of $ \Psi_{a_{\lambda},b_{\mu} }$.
	
	See that $ \Psi $ is indeed an extension since under the assumptions of Lemma \ref{Lemma5.5}, for all $u\in M^-_{a_{\lambda},b_{\mu}}$ com $\jf(u)< m^+_{a_{\lambda},b_{\mu}}+m^{\infty },$ we have $\int_{\R^N} \frac{x}{|x|}( |\nabla_A u|^2+u)^2)dx \neq 0 $, whence follows the inclusion
	$$   [ \jf< m^+_{a_{\lambda},b_{\mu}}+m^{\infty }] \subset \sum = \{ u \in \HA\setminus \{0\};   \int_{\R^N} \frac{x}{|x|}( |\nabla_A u|^2+u)^2)dx \neq 0  \}.$$
	As $w_k \in \sum$ for all $e \in S^{N-1}$ and $k$ sufficiently large, we take $ \gamma:[s_1,s_2] \rightarrow S^{N-1},$, a regular geodesic between $ \overline{\Psi}_{a_{\lambda},b_{\mu} }(w_k)$ and $ \overline{\Psi}_{a_{\lambda},b_{\mu} }(\Phi_{a_{\lambda},b_{\mu} }(e))$ where
	$$ \gamma(s_1)=  \overline{\Psi}_{a_{\lambda},b_{\mu} }(w_k)$$
	and
	$$ \gamma(s_2)=  \overline{\Psi}_{a_{\lambda},b_{\mu} }(\Phi_{a_{\lambda},b_{\mu} }(e)).$$
	
	Remember that $w_k(x)=w_0(x+ke) $ for $k \in \R$, $e \in S^{n-1}$ and $ \Phi_{a_{\lambda},b_{\mu} }(e)(x)= u^+_{\lambda,\mu}+s_lt_0 w_k(x);$ with $s_l$ as in the proof of Proposition \ref{prop4.1} such that $u^+_{\lambda,\mu}+s_lt_0 w_k(x) \in M^-_{a_{\lambda},b_{\mu}}. $
	By an argument similar to that used in Lemma \ref{Lemma5.3}, there is a $k_*\geq k_2$ such that for $k>k_*,$
	
	$$ w_0 \left( x+ \frac{k}{2(1-\theta)}e \right) \in \sum  \;\; \mbox{  for all  }  \;\; e \in S^{N-1} \;\; \mbox{ and } \;\; \theta \in [1/2,1),$$
	where $ w_0 $ is a positive solution of $P_{\infty}$ in $\HA.$
	
	We now want to construct a $ \zeta $ function so that its composition with $ \overline {\Psi}$ is homotopic to identity. For this we define
	$$ \zeta_k(\theta,e):[0,1]\times S^{N-1} \rightarrow S^{N-1} $$
	by
	$$\zeta_k(\theta,e)=
	\left\{ \begin{array} [c]{ll}
	\gamma(2\theta(s_1-s_2)+s_2) \, \, &\mbox{for} \, \,\theta \in  \lbrack 0, 1/2) ;\\
	\overline{\Psi}_{a_{\lambda},b_{\mu} }(w_0(x+\frac{k}{2(1-\theta)}e)) \, \, &\mbox{for} \, \,\theta \in   \lbrack  1/2,1) ;\\
	e \, \, &\mbox{for} \, \,\theta =1.
	
	\end{array}
	\right.
	$$
	Then
	\begin{eqnarray}
	\nonumber \zeta_k(0,e)    &=&\gamma(2^.0(s_1-s_2)+s_2)=\gamma(s_2) \\
	\nonumber    &=&  \overline{\Psi}_{a_{\lambda},b_{\mu} }( \Phi_{a_{\lambda},b_{\mu} }(e) )=\overline{\Psi}_{a_{\lambda},b_{\mu} }( \Phi_{a_{\lambda},b_{\mu} }(e) )
	\end{eqnarray}
	which makes sense since $J_{a_{\lambda},b_{\mu}} (\Phi(e))=J_{a_{\lambda},b_{\mu}} (u^+_{\lambda,\mu}+s_lt_0\Psi_k )<m^+_{a_{\lambda},b_{\mu}}+m^{\infty }.$ Besides that,
	$$ \zeta_k(1,e)=e.$$
	It has already been seen that  $ u^+_{\lambda,\mu} \in C(\C )$. We must also show that $\lim_{\theta \rightarrow 1^-} \zeta_k(\theta,e)=e$ and $\lim_{\theta \rightarrow \frac{1}{2}^-} \zeta_k(\theta,e)=   \overline{\Psi}_{a_{\lambda},b_{\mu} }(w_0(x+ke)) . $
	Firstly, note that $\theta \in \lbrack1/2,1)$
	
	$$ \zeta_k(\theta,e) +   \overline{\Psi}_{a_{\lambda},b_{\mu} }\left( u_0(x+\frac{k}{2(1-\theta)}e)\right) $$
	with
	\begin{eqnarray}
	\int_{\R^N}&& \frac{x}{|x|} \left(|\nabla_A\left[u_0\left(x+ \frac{k}{2(1-\theta)}e\right)\right]|^2+ \left[u_0\left(x+ \frac{k}{2(1-\theta)}e \right) \right]^2 \right) \\
	\nonumber    &=&\int_{\R^N} \frac{x - \frac{k}{2(1-\theta)}e}{|x- \frac{k}{2(1-\theta)}e|} (|\nabla_A u_0(x)|^2+ u_0^2 )dx \\
	\nonumber    &=& \left( \frac{2p}{p-2} \right)m^{\infty}e+o_n(1) \;\;\; \mbox{if}\;\;\; \theta \rightarrow 1^-.
	\end{eqnarray}
	
	As by definition $    \overline{\Psi}_{a_{\lambda},b_{\mu} }(u) = \frac{  \int_{\R^N} \frac{x}{|x|}( |\nabla_A u|^2+u)^2)dx}{| \int_{\R^N} \frac{x}{|x|}( |\nabla_A u|^2+u)^2)dx|} $, it follows that $\lim_{\theta \rightarrow 1^-} \zeta_k(\theta,e)=e$.
	Moreover
	$$\lim_{\theta \rightarrow \frac{1}{2}^-} \zeta_k(\theta,e)= \gamma(s_1)= \overline{\Psi}_{a_{\lambda},b_{\mu} }(w_k)=  \overline{\Psi}_{a_{\lambda},b_{\mu} }(w_0(x+ke)) , $$
	since $\overline{\Psi}:\sum \rightarrow S^{N-1} $ is continuous.
	Thus we conclude that $ \zeta_k(\theta,e)\in C([0,1]\times S^{N-1}, S^{N-1} )$ and
	$$\zeta_k(0,e) = \Psi(\Phi(e)), \;\; \mbox{for all} \;\; e \in S^{N-1}$$
	$$\zeta_k(1,e) =e \;\; \mbox{for all} \;\; e \in S^{N-1}\;\; \mbox{and for all} \;\; k>k_0. $$
	Finally, we can conclude that $ \Psi \circ \Phi$ is homotopic to identity.
	
\end{proof}

\begin{lemma}\label{Lemma6.6}
	For each $\lambda \in (0,\lambda_0)$, $\mu \in (0,\mu_0)$ and $k>k^*$, the functional $ \jf $ has at least two critical points in $ \mathcal{Q}_{\lambda,\mu}=  [ \jf< m^+_{a_{\lambda},b_{\mu}}+m^{\infty }] $.
	
\end{lemma}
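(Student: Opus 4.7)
The plan is to combine the category-theoretic framework of Lemma \ref{Lemma6.2} with the homotopy argument of Lemma \ref{Lemma6.3}. First, I would verify that $M^-_{a_\lambda,b_\mu}$ is a $C^1$-Hilbert manifold under the hypothesis $\lambda^{p-2}(1+\mu\|b_2\|_\infty)^{2-q} < \Upsilon_0$ (as mentioned right before Proposition \ref{teo3.3}, following the argument in \cite{edcarlos}), so that $\mathcal{Q}_{\lambda,\mu}$ is a locally closed subset of this manifold and the category is well-defined.

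Next, applying Lemma \ref{Lemma6.3} to the pair $\Phi_{a_\lambda,b_\mu}:S^{N-1}\to \mathcal{Q}_{\lambda,\mu}$ (whose image indeed lies in $\mathcal{Q}_{\lambda,\mu}$ by Lemma \ref{Lemma6.4}) and $\Psi_{a_\lambda,b_\mu}:\mathcal{Q}_{\lambda,\mu}\to S^{N-1}$ (well-defined on $\mathcal{Q}_{\lambda,\mu}$ by Lemma \ref{Lemma5.5}), together with the explicit homotopy $\zeta_k$ constructed in Lemma \ref{Lemma6.5}, I obtain
$$\mathrm{cat}(\mathcal{Q}_{\lambda,\mu}) \geq 2.$$

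The second ingredient is the Palais--Smale condition for $J_{\lambda,\mu}$ restricted to $M^-_{a_\lambda,b_\mu}$ at every level $c < m^+_{a_\lambda,b_\mu} + m^\infty$. For this I would invoke Lemma \ref{lemadecompacidade}: any $(PS)_c$ sequence with $c < m^+_{a_\lambda,b_\mu} + m^\infty$ admits, up to a subsequence, a strongly convergent subsequence to some nonzero $u_0\in H^1_A(\R^N)$ that is a critical point of $J_{\lambda,\mu}$. Since Lemma \ref{minglobal} and Lemma \ref{N0} ensure that a constrained critical point on $M^\pm_{\lambda,\mu}$ is an unconstrained critical point on $H^1_A(\R^N)$ (as $M^0_{\lambda,\mu}=\emptyset$), the $(PS)$ condition transfers to the constrained functional on $M^-_{a_\lambda,b_\mu}$.

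With both hypotheses of Lemma \ref{Lemma6.2} verified (taking $c_0 = m^+_{a_\lambda,b_\mu} + m^\infty - \varepsilon$ for a small $\varepsilon > 0$, where by Lemma \ref{Lemma6.4} we still have $\mathrm{cat}([J_{\lambda,\mu}\leq c_0])\geq 2$), I conclude that $J_{\lambda,\mu}$ has at least two critical points in $\mathcal{Q}_{\lambda,\mu}$. The main technical obstacle is ensuring the abstract category-theoretic statement of Lemma \ref{Lemma6.2} applies cleanly on the constrained manifold $M^-_{a_\lambda,b_\mu}$ rather than on the ambient space: this requires checking that the deformation flow along the negative gradient of $J_{\lambda,\mu}|_{M^-_{a_\lambda,b_\mu}}$ stays in $M^-_{a_\lambda,b_\mu}$ (guaranteed by the strict separation $M^0_{\lambda,\mu}=\emptyset$ from Lemma \ref{N0}) and that the strong convergence from Lemma \ref{lemadecompacidade} produces limits still lying in $M^-_{a_\lambda,b_\mu}$ rather than crossing into $M^+_{a_\lambda,b_\mu}$, which follows from $m^-_{a_\lambda,b_\mu} < m^+_{a_\lambda,b_\mu} + m^\infty$ (Proposition \ref{prop4.1}) and the sign of $F''_u(1)$ passing to the limit.
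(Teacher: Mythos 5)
Your proposal follows essentially the same route as the paper: use Lemma \ref{Lemma6.5} together with Lemma \ref{Lemma6.3} to get $\mathrm{cat}(\mathcal{Q}_{\lambda,\mu})\geq 2$, use Lemma \ref{lemadecompacidade} to verify the Palais--Smale condition below the level $m^+_{a_\lambda,b_\mu}+m^\infty$, and conclude via Lemma \ref{Lemma6.2}. If anything, your version is more careful than the paper's, which asserts somewhat loosely that ``the domain of $\Psi$ is equal to the image of $\Phi$'' and does not spell out the $\varepsilon$-adjustment of the sublevel set or the passage between constrained and unconstrained critical points that you correctly flag.
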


\begin{proof}
	From what we have seen in Lemma \ref{Lemma6.5}, the $\Psi \circ \Phi$ application is homotopic to the identity in $S^{N-1}$, for $\lambda \in (0,\lambda_0)$, $\mu \in (0,\mu_0)$ and $k>k^*$. Also, note that the domain of $ \Psi$ is equal to the image of $ \Phi $ and is given by the set we call $\mathcal{Q}_{\lambda,\mu}$. Thus, $cat(\mathcal{Q}_{\lambda,\mu})\geq 2$, fulfilling the hypothesis $ (ii) $ of Lemma \ref{Lemma6.2}.
	
	Still, under these conditions $\jf< m^+_{a_{\lambda},b_{\mu}}+m^{\infty } $ for all $u\in \mathcal{Q}_{\lambda,\mu} $ and by Proposition \ref{lemadecompacidade} if $\{u_n\}\subset M^-_{\lambda, \mu}$  is a minimizing sequence in $ \HA $ of $J_{\lambda,\mu} ,$ then there is a subsequence $\{u_n\}$ and $u_0 \in \HA$ non zero, such that $u_n=u_0+o_n(1)$ strong in $\HA$ and $J_{\lambda,\mu}(u_0)=\beta $, giving us that $\jf$ satisfies (PS) which fulfills the condition $ (i) $ of Lemma \ref{Lemma6.2}.
	Thus we can conclude that $ \jf $ has at least two critical points in $\mathcal{Q}_{\lambda,\mu}.$
	
\end{proof}

\begin{proof}[Proof of Theorem \ref{teo1.1(ii)}]
	For $\lambda \in (0,\lambda_0)$, $\mu \in (0,\mu_0)$, using the Theorem \ref{teo3.3} and Lemma \ref{Lemma6.6} we can conclude that problema $ \Plm $ has at least three solutions $\uma$, $u_1^-$ and $u_2^-$ with $\uma \in  M^+_{a_{\lambda},b_{\mu}}$, and $u_1^-$ and $u_2^- \;\; \in  M^-_{a_{\lambda},b_{\mu}}$. This concludes the proof of the Theorem \ref{teo1.1(ii)}.
	
\end{proof}

\section{The regularity of solutions to problems with $\Delta_A$}
In this section we will establish regularity results for the non-zero $\Plm$ with $\lambda > 0$ and $\mu > 0$.
Assuming that the conditions $ (A) $, $ (B_1) $ and $ (B_2) $ are satisfied, we will combine Brezis-Kato's regularity arguments \cite[Lemma B3]{struwe} and an argument similar to that used in \cite[Lemma 2.1]{ChabSzul}, to show that if $u$ is a solution of $ \Plm $, then  $u \in L^{\gamma}(\R^N)$ for all  $\gamma \in \lbrack 2^*,+\infty)$. 
To do this, we will need the results we will show below.

\begin{lemma}\label{B3}
	Consider
	$$h(x,u)=a_{\lambda}(x)|u|^{q-2}u+b_{\mu}(x)|u|^{p-2}u.$$
Assume that the conditions $(A)$, $(B_1) $ and $(B_2)$ are satisfied.
	then, there is $|v(x)| \in L^{\frac{N}{2}}$ such that
	$$  h(x,u)= v(x)(1+|u|)   .$$
\end{lemma}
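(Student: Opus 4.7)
The plan is to take the only natural candidate, namely
$$v(x):=\frac{h(x,u(x))}{1+|u(x)|},$$
so that the identity $h(x,u)=v(x)(1+|u|)$ holds tautologically, and devote the entire argument to the integrability claim $|v|\in L^{N/2}(\R^N)$. Writing out the two terms of $h$ and using the triangle inequality gives
$$|v(x)|\leq |a_\lambda(x)|\,\frac{|u(x)|^{q-1}}{1+|u(x)|}+|b_\mu(x)|\,\frac{|u(x)|^{p-1}}{1+|u(x)|},$$
and two elementary pointwise bounds reduce this further. For the concave term, $t\mapsto t^{q-1}/(1+t)$ is continuous on $[0,\infty)$ with limit $0$ at both endpoints (since $1<q<2$), so it is bounded by some constant $C_q$. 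For the convex term, $t^{p-1}=t^{p-2}\cdot t\leq t^{p-2}(1+t)$ yields $t^{p-1}/(1+t)\leq t^{p-2}$. Hence
$$|v(x)|\leq C_q|a_\lambda(x)|+|b_\mu(x)|\,|u(x)|^{p-2}.$$

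I would then show each summand lies in $L^{N/2}(\R^N)$. For the second, $(B_1)$ and $(B_2)$ imply $b_\mu\in L^{\infty}(\R^N)$, so it suffices that $|u|^{p-2}\in L^{N/2}$; this holds because the diamagnetic inequality puts $|u|\in H^1(\R^N)\hookrightarrow L^{2^*}(\R^N)$, and the subcritical condition $p<2^*$ yields $(p-2)N/2\leq 2^*$, so interpolation between $L^{2}$ and $L^{2^*}$ gives $|u|^{p-2}\in L^{N/2}$. For the first, decompose $a_\lambda=\lambda a_++a_-$: the exponential decay in $(A)$ makes $a_-$ belong to every $L^r(\R^N)$, $r\geq 1$, in particular $L^{N/2}$; the $a_+$ contribution is handled by \emph{not} discarding the factor $|u|^{q-1}/(1+|u|)$ at the pointwise stage, but instead applying Hölder with $a_+\in L^{q'}(\R^N)$ (where $q'=p/(p-q)$) against an appropriate Lebesgue power of $u$ coming from $u\in L^{2^*}\cap L^{2}$.

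The main obstacle is precisely this last estimate: hypothesis $(A)$ only gives $a_+\in L^{q'}$ and not $L^{N/2}$, so the missing integrability has to be bought from the factor $|u|^{q-1}/(1+|u|)$ via a careful choice of Hölder exponents that exploits the strict subcriticality $p<2^*$ and the range $1<q<2$. Everything else is routine algebra and Sobolev theory.
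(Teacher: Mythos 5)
Your overall strategy coincides with the paper's: the same candidate $v=h(x,u)/(1+|u|)$, the same splitting of $|v|$ into an $a_\lambda$-part and a $b_\mu$-part, the same bound $|u|^{p-1}/(1+|u|)\le |u|^{p-2}$ followed by $b_\mu\in L^\infty$ and Sobolev embedding for the convex term (your interpolation claim, like the paper's, silently needs $(p-2)\tfrac N2\ge 2$ in addition to $(p-2)\tfrac N2\le 2^*$, but that is an issue you share with the paper rather than introduce). Your observation that $(A)$ places $a_-$ in every $L^r(\R^N)$ is a clean refinement the paper does not spell out.

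The difficulty is that you stop exactly where the only non-routine work lies. For the $a_+$ contribution you announce that the missing integrability ``has to be bought from the factor $|u|^{q-1}/(1+|u|)$ via a careful choice of H\"older exponents,'' but you never exhibit those exponents, and the choice is not free. If you keep the bound $|u|^{q-1}/(1+|u|)\le |u|^{q-1}$ and pair $a_+\in L^{q'}$, $q'=p/(p-q)$, against a power of $|u|$, then H\"older applied to $\int a_+^{N/2}|u|^{(q-1)N/2}$ forces the exponent on $a_+^{N/2}$ to be $s=2q'/N$, and $s>1$ amounts to $q'>N/2$, i.e.\ $p(N-2)<Nq$ --- a condition that does not follow from $1<q<2<p<2^*$ (take $N=3$, $q$ near $1$, $p$ near $6$). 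The alternative of discarding the $u$-factor altogether via your constant $C_q$ would require $a_+\in L^{N/2}$, which $(A)$ does not provide either. This is precisely the computation the paper performs in its estimate (\ref{e.1}), with $s=\frac{2p}{N(p-q)}$ and a verification that the resulting power of $|u|$ does not exceed $2^*$; a complete proof must contain that computation (and confront the admissibility of the exponents), so as written your proposal has a genuine gap at the very step you yourself identify as the main obstacle.
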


\begin{proof}
	Notice that
	{\small
		\begin{eqnarray*}
			h(x,u)&=&a_{\lambda}(x)|u|^{q-2}u+b_{\mu}(x)|u|^{p-2}u\\
			&=&(1+|u|)\left(  \frac{a_{\lambda}(x)|u|^{q-2}u }{ 1+|u|}+ \frac{ b_{\mu}(x)|u|^{p-2}u }{ 1+|u|} \right).
	\end{eqnarray*}}
	Calling
	$$ v(x)= \left(  \frac{a_{\lambda}(x)|u|^{q-2}u }{ 1+|u|}+ \frac{ b_{\mu}(x)|u|^{p-2}u }{ 1+|u|} \right) ,$$
	we have
	{\small
		\begin{eqnarray*}
			|v(x)|&=& \left|  \frac{a_{\lambda}(x)|u|^{q-2}u }{ 1+|u|}+ \frac{ b_{\mu}(x)|u|^{p-2}u }{ 1+|u|} \right|\\
			&  \leq &\left|  \frac{a_{\lambda}(x)|u|^{q-2}u }{ 1+|u|}\right| + \left| \frac{ b_{\mu}(x)|u|^{p-2}u }{ 1+|u|} \right|\\
			&\leq &|a_{\lambda}(x)| |u|^{q-1}+\frac{|b_{\mu}(x)| |u|^{p-1}}{|u|}.
	\end{eqnarray*}}
	\normalsize
	Also,
	\begin{equation}\label{e.1}
	\int\left(  |a_{\lambda}(x)| |u|^{q-1}\right)^{N/2} \leq ||a_{\lambda}||_{q'}^{\frac{N}{2}} \left( \int |u|^{p'}  \right)^{\frac{1}{s'}},
	\end{equation}
	where $s= \frac{2p}{n(p-q)}$ and  $ p'=\frac{2p}{2p-n(p-q)}$. See that, $ p'=(q-1)\frac{np}{2p-n(p-q)} \leq  2^*$, whence $ |a_{\lambda}(x)| |u|^{q-1} \in L^{\frac{N}{2}} $.
	Moreover,
	\begin{equation}\label{e.2}
	\int \left( \frac{|b_{\mu}(x)| |u|^{p-1}}{|u|}\right)^{\frac{N}{2}} \leq \int (  |b_{\mu}(x)| |u|^{p-2} )^{\frac{N}{2}} \leq C\int ( |u|^{p-2})^{\frac{N}{2}} .
	\end{equation}
	As $p<2^* $, we have $(p-2)\frac{N}{2} < (2^*-2)\frac{N}{2} = 2^*$ and by continuous embeding $\HA \hookrightarrow L^{\gamma},$ for all  $2 \leq \gamma \leq 2^*$, we have $ \displaystyle \int ( |u|^{p-2})^{\frac{N}{2}}<\infty$. So $ |b_{\mu}(x)| |u|^{p-2} \in L^{\frac{N}{2}} $. Thus, by (\ref{e.1}) and (\ref{e.2})
	$$  |h(x,u)|\leq |v(x)|(1+|u|) \;\;\;\mbox{with} \;\;\;|v(x)|\;\;\; \in L^{\frac{N}{2}}  .$$
\end{proof}

Now consider $\phi(x)= \eta(x)^2
u(x)\min\{|u(x)|^{\beta-1},L\}$, where $\beta>1$ and $L>0$ are constant and $u \in \HA$, and $\eta$ is a function $C^1$, bounded, such that its derivative is also bounded.

We denote $ \chi_{ \Omega}$ as the characteristic function of the set $ \Omega $. Then we will have
{\small
	\begin{eqnarray*}
		\overline{\nabla_A \phi} &=&     2\eta\nabla \eta \bar{u}\min\{|u(x)|^{\beta-1},L\}+\eta^2\overline{\nabla_A u}\min\{|u(x)|^{\beta-1},L\}  \\
		&  +& (\beta-1)\eta^2 \bar{u}  |u|^{\beta-2}\nabla |u|\chi_{|u|^{\beta-1}<L }
\end{eqnarray*}}
and
{\small
	\begin{eqnarray*}
		\nabla_A u \overline{\nabla_A \phi} &=& | \nabla_A u|^2\eta^2\min\{|u|^{\beta-1},L\}+    2\eta\nabla \eta \bar{u}\min\{|u|^{\beta-1},L\}\nabla_A u  \\
		&  +& (\beta-1)\eta^2 \bar{u}  |u|^{\beta-2}\nabla |u|\chi_{|u|^{\beta-1}<L }\nabla_A u.
\end{eqnarray*}}
Note that
$$ Re(\bar{u}\nabla_A u) = Re( \nabla u +iAu)\bar{u}= Re (\bar{u}\nabla u )= |u|Re( \frac{\bar{u}}{|u|}\nabla u)=|u|\nabla|u|. $$
Using the real part of $ \nabla_A u \overline{\nabla_A \phi} $ we get
{\small
	\begin{eqnarray*}
		Re(\nabla_A u \overline{\nabla_A \phi}) &=& | \nabla_A u|^2\eta^2\min\{|u|^{\beta-1},L\}+    2\eta\nabla \eta \nabla |u||u|\min\{|u|^{\beta-1},L\}  \\
		&  +& (\beta-1)\eta^2    |u|^{\beta-1}|\nabla |u||^2\chi_{|u|^{\beta-1}<L }\\
		&\leq&     | \nabla_A u|^2\eta^2\min\{|u|^{\beta-1},L\}+    2\eta\nabla \eta \nabla |u||u|\min\{|u|^{\beta-1},L\} .
\end{eqnarray*}}
So
\small{
	\begin{equation}\label{preal}
	Re(\nabla_A u \overline{\nabla_A \phi})\leq     | \nabla_A u|^2\eta^2\min\{|u|^{\beta-1},L\}+    2\eta\nabla \eta \nabla |u||u|\min\{|u|^{\beta-1},L\} .
	\end{equation}}
\begin{lemma}\label{lemachabr}
	The $\Plm$ solutions with $\lambda > 0$ and $\mu > 0$, in $\HA$, belong to $L^{\gamma}(\R^N)$ for all  $\gamma\in \lbrack2^*,+\infty).$
\end{lemma}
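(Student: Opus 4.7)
The strategy is a Brezis--Kato-type bootstrap adapted to the magnetic setting via the diamagnetic inequality. By Lemma \ref{B3}, any weak solution $u\in H_A^1(\R^N)$ of $\Plm$ satisfies
$$-\Delta_A u + u = v(x)(1+|u|), \qquad v\in L^{N/2}(\R^N),$$
so the weak formulation reads
$$Re\int_{\R^N}\bigl(\nabla_A u \cdot \overline{\nabla_A \phi} + u\bar\phi\bigr)\,dx = Re\int_{\R^N} v(x)(1+|u|)\bar\phi\,dx$$
for every $\phi\in H_A^1$. I would test against $\phi = \eta^2 u \min\{|u|^{\beta-1}, L\}$ for $\beta>1$, $L>0$, and a smooth, compactly supported cutoff $\eta$; note that $\phi\in H_A^1$ thanks to the truncation by $L$.

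Using the expansion of $Re(\nabla_A u \cdot\overline{\nabla_A\phi})$ carried out just before (\ref{preal}), retaining both nonnegative contributions $|\nabla_A u|^2 \eta^2 \min\{|u|^{\beta-1},L\}$ and $(\beta-1)\eta^2|u|^{\beta-1}|\nabla|u||^2 \chi_{\{|u|^{\beta-1}<L\}}$, absorbing the cross term $2\eta\,\nabla\eta\cdot\nabla|u|\,|u|\min\{|u|^{\beta-1},L\}$ by Cauchy--Schwarz, and invoking the diamagnetic inequality $|\nabla|u||\le|\nabla_A u|$, one arrives at
$$c_\beta\int_{\R^N}\eta^2|\nabla w_L|^2\,dx + \int_{\R^N}\eta^2 w_L^2\,dx \le C\int_{\R^N}|\nabla\eta|^2 w_L^2\,dx + C\int_{\R^N}\eta^2|v|(1+|u|)\,w_L^2\,dx,$$
where $w_L := |u|\min\{|u|^{(\beta-1)/2}, L^{1/2}\}$, so that $w_L^2 = |u|^2\min\{|u|^{\beta-1},L\}$, and $c_\beta>0$ depends only on $\beta$.

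The next step is to apply the Sobolev embedding $\|\eta w_L\|_{L^{2^{*}}}^2 \le C_{\mathrm{Sob}}\|\nabla(\eta w_L)\|_{L^{2}}^2$ on the left, while on the right I would split $v = v\chi_{\{|v|\le K\}} + v\chi_{\{|v|>K\}}$ and use H\"older. The tail piece yields $\|v\chi_{\{|v|>K\}}\|_{L^{N/2}}\,\|\eta w_L\|_{L^{2^{*}}}^2$ which, thanks to $v\in L^{N/2}$, can be absorbed into the left-hand side by choosing $K$ sufficiently large; the bounded piece is dominated by $C_K\bigl(1+\int|u|^{\beta+1}\,dx\bigr)$, finite whenever $u\in L^{\beta+1}(\R^N)$. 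Letting $L\to\infty$ by monotone convergence and then $\eta\nearrow 1$ through a family of compactly supported cutoffs with uniformly bounded gradient, the bootstrap step is complete: $u\in L^{\beta+1}(\R^N)$ implies $u\in L^{2^{*}(\beta+1)/2}(\R^N)$. Starting from $\beta+1 = 2^{*}$, valid because $u\in H_A^1\hookrightarrow L^{2^{*}}$, and iterating, the exponents grow geometrically and exhaust the whole interval $[2^{*},+\infty)$.

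The principal obstacle is the complex, magnetic-gradient character of the equation: since $u$ is complex-valued and $-\Delta_A u$ is a twisted second-order operator rather than the Laplacian of a real function, the classical Brezis--Kato argument does not apply verbatim. The remedy is precisely the real-part expansion leading to (\ref{preal}) combined with the diamagnetic inequality, which together reduce the magnetic problem to a standard $L^p$-bootstrap estimate for the real modulus $|u|$, on which the ordinary Sobolev and H\"older inequalities act in the usual way.
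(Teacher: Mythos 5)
Your proposal is correct and follows essentially the same route as the paper: test with $\phi=u\min\{|u|^{\beta-1},L\}$, use the decomposition $h(x,u)=v(x)(1+|u|)$ with $v\in L^{N/2}$ from Lemma \ref{B3}, pass from $|\nabla_A u|$ to $|\nabla|u||$ via the diamagnetic inequality, apply Sobolev, split $v$ at level $K$ to absorb the large part, and let $L\to\infty$. The only (harmless) differences are that you carry a cutoff $\eta$ and send $\eta\nearrow 1$ where the paper works directly with $\eta=1$, and that you spell out the bootstrap iteration $\beta+1\mapsto 2^{*}(\beta+1)/2$ explicitly, which the paper leaves implicit after the first step $\beta+1=2^{*}$.
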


\begin{proof}
	We will test our problem  with the function $\phi=u\min\{|u|^{\beta-1},L\} $. Note que $ (u\phi)^{\frac{1}{2}}= u  \min\{|u|^{\beta-1},L\}^{\frac{1}{2}} $. Our first objective is to show that
	$$ || |u| \min\{|u|^{\beta-1},L\}^{\frac{1}{2}}||^2_{2^*} \leq C\int |u|^2\min\{|u|^{\beta-1},L\}  .  $$
	
	For this, we will see in parts, the following sequence of four inequalities
	{\small
		\begin{eqnarray}
		\label{1.1}  || |u|\min\{|u|^{\beta-1},L \}^{\frac{1}{2}}||^2_{2^*} &\leq &  \int | \nabla(|u|\min\{|u|^{\frac{\beta-1}{2}},L^{\frac{1}{2}}\})|^2dx\\
		\label{1.2}  &\leq & C(\beta)\int |\nabla|u||^2\min\{|u|^{ \beta-1} ,L \}dx\\
		\label{1.3}  &  \leq &  C(\beta) \int |\nabla_A u|^2\min(|u|^{ \beta-1},L  ))dx\\
		\label{1.4} &  \leq &  C(K,\beta)\int |u|^2 \min\{|u|^{ \beta-1} ,L\}dx.
		\end{eqnarray}}	
The first inequality (\ref{1.1}), we derive from the Sobolev inequality. Indeed
	$$  || |u|\min\{|u|^{\beta-1},L\}^{\frac{1}{2}}||^2_{2^*} \leq   |||u|\min\{|u|^{\beta-1},L\}^{\frac{1}{2}}||^2_{H_0^1} .$$	
	Also, to verify (\ref{1.2}) we observe that
	$$|||u|\min\{|u|^{\beta-1},L\}^{\frac{1}{2}}||^2_{H_0^1} = \int | \nabla(|u|\min\{|u|^{\frac{\beta-1}{2}},L^{\frac{1}{2}}\})|^2 + |(u\min\{|u|^{\frac{\beta-1}{2}},L^{\frac{1}{2}}\})|^2dx  ,$$
	so	{\small
		\begin{eqnarray*}
			\lefteqn{ \int  | \nabla(|u|\min(|u|^{\frac{\beta-1}{2}},L^{\frac{1}{2}}  ))|^2+ [(|u|\min\{|u|^{\frac{\beta-1}{2}},L^{\frac{1}{2}}\})]^2dx}\\
			& \leq & \int 2(\nabla |u|)^2\min\{|u|^{\beta-1},L\}+ 2 |u|^2\left(u\nabla u \frac{\beta -1}{2} |u|^{\frac{\beta -1}{2}-2} \chi_{|u|^{\frac{\beta -1}{2}}<L^{\frac{1}{2}}}\right)^2\\ &+&[(|u|\min\{|u|^{\frac{\beta-1}{2}},L^{\frac{1}{2}}\})]^2\\
			&  \leq &\left( 2+ \frac{(\beta -1)^2}{2}\right) \int |\nabla |u||^2\min\{|u|^{\beta-1},L\}+ [(|u|\min\{|u|^{\frac{\beta-1}{2}},L^{\frac{1}{2}}\})]^2.
	\end{eqnarray*}}	
The third inequality (\ref{1.3}) we obtain from the diamagnetic inequality
	{\small
		\begin{eqnarray*}
			C\int |\nabla |u||^2\min\{|u|^{\beta-1},L\}\leq C \int |\nabla_Au|^2\min\{|u|^{\beta-1},L\}.
	\end{eqnarray*}}	
	Finally, it follows from inequality(\ref{preal}), with $\eta =1$, that for every constant $K$
	{\small
		\begin{eqnarray*}
			\lefteqn{  \int_{\R^N}   | \nabla_A u|^2 \min\{|u|^{\beta-1},L\}   +   |u|^2\min\{|u|^{\beta-1},L\}\leq \int  Re(\nabla_A u \overline{\nabla_A \phi})+ Re|u|^2\min\{|u|^{\beta-1},L\}}\\
			&=&Re \int( -\Delta_Au\overline{\phi}+u\overline{\phi} ) \leq  \int| h(x,u) || \overline{\phi} | \leq \int |v(x)|(1+|u|)|\phi|\\	
			& \leq&  \left(\int_{|v(x)|>K} |v(x)|^{\frac{N}{2}}\right)^{\frac{2}{N}}\left(\int(|u|\min\{|u|^{\beta-1},L\})^{\frac{N}{N-2}}\right)^{\frac{N-2}{N}}+ K\int_{|v(x)|<K}|u|\min\{|u|^{\beta-1},L\}\\
			&+&\left(\int_{|v(x)|>K}v(x)^{\frac{N}{2}}\right)^{\frac{2}{N}}\left(\int(|u|^2\min\{|u|^{\beta-1},L\})^{\frac{N}{N-2}}\right)^{\frac{N-2}{N}}+ K\int_{|v(x)|<K}|u|^2\min\{|u|^{\beta-1},L\}.
	\end{eqnarray*}}
For $ K $ large enough we have $\left(\int_{|v(x)|>K}v(x)^{\frac{N}{2}}\right)^{\frac{2}{N}}=o_n(1)$, since by Lemma \ref{B3}, $|v(x)| \in L^{\frac{N}{2}}$. Still, making $ L \rightarrow \infty $ and taking $\beta +1=2^*$ we get
	{\small
		\begin{eqnarray*}
			\int_{\R^N}   | \nabla_A u|^2 \min\{|u|^{\beta-1},L\}  &+ & |u|^2\min\{|u|^{\beta-1},L\}\leq \\
			&\leq &   K\int|u|^{2^*-1}+K\int|u|^{2^*}+o_N(1)<\infty,
	\end{eqnarray*}}
	as $u \in \HA \hookrightarrow L^p$ for $2<p\leq2^*$.
	
	We conclude that $u_0$ is a solution of $\Plm$, then $u \in L^{\gamma}(\R^N)$ for all  $\gamma\in \lbrack 2^*,\infty).$
\end{proof}

\begin{theorem}\label{regul}
	Supose that $u_0 \in \HA$ is a $\Plm$ non-zero solution with $\lambda > 0$ and $\mu > 0$. Then, we have $u_0 \in C(R^N,\C) \cap L^{\gamma}(R^N)$ for all $2^* \leq \gamma  < +\infty$ .
\end{theorem}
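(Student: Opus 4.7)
The $L^\gamma$ half of the statement is precisely the conclusion of Lemma \ref{lemachabr}, so the only new work is to establish the continuity $u_0 \in C(\R^N, \C)$.

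My plan is to recast the magnetic equation as a scalar Poisson equation with a sufficiently integrable right-hand side and then apply Calder\'on--Zygmund theory followed by Morrey's embedding. Using the identity
$$-\Delta_A u_0 = -\Delta u_0 + 2iA\cdot\nabla u_0 + (|A|^2 + i\,\mathrm{div}\,A)\,u_0,$$
I would rewrite $\Plm$ as $-\Delta u_0 = F(x)$, where
$$F(x) = a_\lambda(x)|u_0|^{q-2}u_0 + b_\mu(x)|u_0|^{p-2}u_0 - u_0 - 2iA\cdot\nabla u_0 - (|A|^2 + i\,\mathrm{div}\,A)\,u_0.$$
By Lemma \ref{lemachabr}, $u_0 \in L^\gamma(\R^N)$ for every $\gamma \in [2^*,\infty)$; combined with hypothesis $(A)$ (which gives $a_\lambda \in L^{q'}$) and the local boundedness of $b_\mu$, the nonlinear terms $a_\lambda|u_0|^{q-2}u_0$ and $b_\mu|u_0|^{p-2}u_0$ lie in $L^s_{loc}(\R^N)$ for every finite $s$.

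Starting from $u_0 \in \HA$ (so $\nabla u_0 \in L^2_{loc}$) and $A \in L^2_{loc}$, H\"older places the remaining linear pieces of $F$ in some $L^{s_0}_{loc}$. The Calder\'on--Zygmund estimate for $-\Delta$ (see Gilbarg--Trudinger \cite{GT}) then upgrades $u_0$ to $W^{2, s_0}_{loc}$, and Sobolev embedding improves the integrability of $\nabla u_0$, which in turn improves the integrability of the cross term $A \cdot \nabla u_0$. A standard finite bootstrap reaches $F \in L^s_{loc}$ for some $s > N/2$, whence $u_0 \in W^{2,s}_{loc}$ and Morrey's embedding $W^{2,s}_{loc} \hookrightarrow C^{0,\,2-N/s}_{loc}$, applied separately to $\mathrm{Re}\,u_0$ and $\mathrm{Im}\,u_0$, yields $u_0 \in C(\R^N,\C)$.

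The principal obstacle is the cross-term $A \cdot \nabla u_0$: since $A$ is only assumed to belong to $L^2_{loc}$, this term does not immediately land in a favorable $L^p$ class, which is what forces the bootstrap above instead of a one-shot application of elliptic regularity. A subsidiary technicality is the interpretation of $(\mathrm{div}\,A)\,u_0$, but this is routine in the magnetic Schr\"odinger framework already adopted in the paper, and once both are handled the conclusion follows without further surprise.
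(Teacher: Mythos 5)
Your proposal follows essentially the same route as the paper's own proof: the $L^{\gamma}$ part is quoted from Lemma \ref{lemachabr}, the magnetic Laplacian is expanded so that $\Plm$ becomes a Poisson-type equation whose right-hand side carries the terms $2iA\cdot\nabla u_0$ and $(|A|^2+i\,\mathrm{div}\,A)u_0$, and a finite elliptic bootstrap followed by Sobolev--Morrey embedding (applied to real and imaginary parts) yields continuity --- the paper merely performs the real/imaginary splitting $u_0=v+iw$ at the outset rather than at the final step and keeps the $+u$ term on the left-hand side. One caveat, shared equally by your argument and by the paper's: with $A$ only in $L^{2}_{loc}$ the terms $|A|^2u_0$, $A\cdot\nabla u_0$ and $(\mathrm{div}\,A)u_0$ never rise above (roughly) $L^{1}_{loc}$ no matter how much integrability the bootstrap gains for $u_0$ and $\nabla u_0$, so the iteration cannot reach an exponent above $N/2$; both proofs tacitly assume $A$ and $\mathrm{div}\,A$ are locally bounded, as is standard in this literature.
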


\begin{proof}
	By Lemma \ref{lemachabr}, as $u_0 \in \HA$ is a $\Plm$ non-zero solution, so he
	belong to $L^{\gamma}(\R^N)$ for all  $\gamma\in \lbrack2^*,+\infty).$
	
	Now, to show the other part of the intersection and apply the theory of regularity we need to separate our problem into two others by doing
	$$ u=v+iw.$$
	See that,
	{\small
		\begin{eqnarray*}
			-\Delta_Au+u&=&-\Delta u-2iA\nabla u+|A|^2u-iu \mbox{ div } A +u\\
			&=& a_{\lambda}|u|^{q-2}u+b_{\mu}|u|^{p-2}u,
	\end{eqnarray*}}
	 so 
	$$ -\Delta v + v = h_1 =  a_{\lambda}|u|^{q-2}v+b_{\mu}|u|^{p-2}v -  2A\nabla w - |A|^2v -w \,\mbox{div} A,$$
	$$ -\Delta w + w = h_2 =  a_{\lambda}|u|^{q-2}w + b_{\mu}|u|^{p-2}w +  2A\nabla w - |A|^2w + w\, \mbox{div} A.$$
	With this we have $ v , w \in H^1 \hookrightarrow L^{2^*}$ and by \cite[Theorem 1.9]{willen}, follow that $ h_1    $ and $h_2$ $\in L^{q_1}(K,\R),$ where $q_1=\min\{2^*(p-1)^{-1},2\}$.
	By a standard argument $v,w \in W^{2,q_1}$. Using \cite[Corolary 9.13]{Brezis}, if $2q_1<N$ we get, $v,w \in L^{\frac{Nq_1}{N-2q_1}}$ and still $\nabla v,\nabla w \in L^{\frac{Nq_1}{N-2q_1}}$.
	Again, doing the same calculations we get $ h_1    $ and $h_2$ $\in L^{q_2}(K,\R),$ where $q_2=Nq_1\min\{(N-2q_1)(p-1)^{-1},(N-q_1)^{-1}\}$.
	We now use the boot-strap argument to conclude that after a finite number of steps we will have $ v,w \in W^{2,q} $ for all  $q \in \lbrack 1, +\infty)$ and by Sobolev embedding, $ v $ and $w \in C^{1,\alpha}(K,\R)$ with $0<\alpha <1$, whence we get the desired result for $u.$

	
\end{proof}

To conclude the regularity results we present the following lemma.
\begin{theorem}\label{limitezero}
	If $u\in \HA$ is $ \Plm $ solution, then $u \in L^{\infty}(\R^N)$ and $\lim_{|x|\rightarrow \infty}u(x)=0.$
\end{theorem}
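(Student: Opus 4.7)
The plan is to reduce the problem to a real-valued subsolution inequality for the modulus $|u_0|$ and then apply a local Moser-type estimate, so that both the global $L^{\infty}$ bound and the decay at infinity follow from a single uniform estimate on unit balls.

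First, using Kato's inequality for the magnetic Laplacian (cf.\ Esteban--Lions \cite{EstLions} and Kurata \cite{Kurata}) together with the diamagnetic inequality already invoked in Section~2, I would show that $|u_0|$ is a nonnegative weak subsolution of
\begin{equation*}
-\Delta |u_0| + |u_0| \;\leq\; H(x) \;:=\; |a_\lambda(x)|\,|u_0|^{q-1} + b_\mu(x)\,|u_0|^{p-1} \quad \text{in } \R^N.
\end{equation*}
This is the step that makes the complex-valued problem amenable to real scalar elliptic techniques.

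Next, by Theorem \ref{regul} combined with $u_0 \in H^1_A(\R^N) \subset L^2(\R^N)$, one has $u_0 \in L^\gamma(\R^N)$ for every $\gamma \in [2, +\infty)$. Since $a \in L^{q'}(\R^N)$ and $b_\mu \in L^\infty(\R^N)$, H\"older's inequality will give $H \in L^s(\R^N)$ for some $s > N/2$, by pairing $a_\lambda \in L^{q'}$ with $|u_0|^{q-1} \in L^r$ for $r$ arbitrarily large and using that $b_\mu|u_0|^{p-1} \in L^\sigma$ for $\sigma$ arbitrarily large. A standard De~Giorgi--Nash--Moser subsolution estimate applied to $|u_0|$ on the ball $B(x_0, 1)$ then yields
\begin{equation*}
\sup_{B(x_0, 1/2)} |u_0| \;\leq\; C\left( \|u_0\|_{L^2(B(x_0,1))} + \|H\|_{L^s(B(x_0,1))} \right),
\end{equation*}
with $C$ independent of $x_0 \in \R^N$. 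Since $u_0 \in L^2(\R^N)$ and $H \in L^s(\R^N)$ are globally integrable, the right-hand side is bounded uniformly in $x_0$, which gives $u_0 \in L^\infty(\R^N)$; moreover, by absolute continuity of the integral, each piece on the right tends to zero as $|x_0| \to \infty$, so $|u_0(x)| \to 0$ as $|x| \to \infty$.

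The main obstacle lies in the first step: rigorously deriving the Kato-type subsolution inequality for $|u_0|$ in the magnetic setting. The formal computation breaks down where $|u_0|$ vanishes, so one typically regularizes by replacing $|u_0|$ with $(|u_0|^2 + \varepsilon^2)^{1/2}$, establishes the estimate for the regularization, and then passes to the limit $\varepsilon \to 0$. A secondary technical point is to check that one can take $s > N/2$ strictly in the integrability of $H$; this uses the full strength of $u_0 \in L^\gamma$ for all $\gamma < \infty$ from Theorem \ref{regul}, and is most delicate in the regime where $p$ is close to $2^*$ and $q$ is close to $2$.
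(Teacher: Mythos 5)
Your overall architecture --- reduce to a real, nonnegative subsolution of a scalar inequality, prove a local boundedness estimate on unit balls with a constant independent of the center, and then read off both the global bound and the decay from global integrability --- is the same as the paper's. The difference in mechanics is that the paper does not pass through Kato's inequality: it runs the Moser iteration directly on the magnetic equation, testing with $\phi=\eta^2 u\min\{|u|^{\beta-1},L\}$ and using the diamagnetic inequality to dominate $|\nabla|u||$ by $|\nabla_A u|$. Your first step (magnetic Kato inequality via the regularization $(|u_0|^2+\varepsilon^2)^{1/2}$) is legitimate and arguably cleaner, since it lets you quote a scalar local estimate instead of re-deriving it.

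The genuine gap is the claim that $H\in L^{s}(\R^N)$ for some $s>N/2$. It holds for the convex piece $b_\mu|u_0|^{p-1}$, but for the concave piece H\"older only gives $\bigl\||a_\lambda|\,|u_0|^{q-1}\bigr\|_{s}<\infty$ with $\tfrac1s=\tfrac1{q'}+\tfrac1r$, so $s<q'=\tfrac{p}{p-q}$ no matter how large you take $r$: the extra integrability of $u_0$ from Theorem \ref{regul} cannot push $s$ past $q'$, because the obstruction is the integrability of $a_\lambda$ itself, not of $u_0$. One has $q'>N/2$ if and only if $p<\tfrac{Nq}{N-2}$, which is strictly stronger than $p<2^*$ and fails, for instance, whenever $N\ge 4$, $q$ is near $1$ and $p$ is near $2^*$ (so the delicate regime is $q\downarrow 1$, not $q\uparrow 2$ as you suggest, and it is not merely delicate --- the estimate you need is simply unavailable there). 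The repair is the one the paper uses: write the right-hand side as $v(x)(1+|u_0|)$ with $v\in L^{N/2}$ only (Lemma \ref{B3}), accept the borderline exponent, and compensate by choosing $R$ so large that $\|v\|_{L^{N/2}(|x|>R)}$ is small enough to absorb the corresponding term into the left side of the Caccioppoli-type inequality --- the Brezis--Kato device --- with the bounded region $B(0,R)$ then covered by finitely many small balls on which $\|v\|_{N/2}$ is small. With that substitution (and noting that even the $L^{N/2}$ membership of the concave piece is where the hypothesis $a\in L^{q'}$, $q'=p/(p-q)$, is actually consumed), the remainder of your plan --- uniform estimate on balls, decay by absolute continuity of the integral --- goes through and matches the paper's conclusion.
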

\begin{proof}
	We will use the Moser's interaction technique.
	Let $ \eta $ be a compact $ C^1 $ function. Rewriting the problem $ \Plm $ as follows
	{\small
		\begin{eqnarray}\label{prob}
		\nabla_Au+u=g(x,|u|)u,
		\end{eqnarray}}
	we get $g(x,|u|)u= v(x)(1+|u|)$, with $v \in L^{N/2}$, as has already been seen in Lemma \ref{B3}. Testing (\ref{prob}) with $\phi=\eta^2u \min\{|u|^{\beta -1},L\}$ and using the inequality ($ \ref{preal} $) we get the estimate
	{\small
		\begin{eqnarray*}
			\lefteqn{ \int_{\R^N} (  | \nabla_A u|^2 \eta^2 \min\{|u|^{\beta-1},L\}   + 2\eta \nabla \eta  \nabla|u||u|\min\{|u|^{\beta-1},L\} dx\leq} \\
			&\leq& \int  Re(\nabla_A u \overline{\nabla_A \phi}) =Re \int  -\Delta_Au\overline{\phi} = Re \int  (-u+g(x)u) \overline{\phi}\\
			&\leq &\int|  g(x)  u\overline{\phi}|+|u \overline{\phi}|\leq \int|  v(x)||\overline{\phi}|+  |v(x)||u|)||\overline{\phi}|. 
	\end{eqnarray*}}
	Note that 
	{\small
		\begin{eqnarray*}
			\frac{1}{2}\eta^2  | \nabla| u||^2-2|u|^2|\nabla\eta|^2 &\leq &\eta^2 |\nabla|u||^2+2 \eta|u| \nabla|u|\nabla \eta \\
			&\leq &\eta^2 |\nabla_Au|^2+2 \eta|u| \nabla|u|\nabla \eta.
	\end{eqnarray*}}
	\normalsize
	In this way,
	{\small
		\begin{eqnarray*}
			\frac{1}{2}\int_{\R^N}  | \nabla| u||^2\eta^2 \min\{|u|^{\beta-1},L\}  &\leq&  \int_{\R^N} (|\nabla_Au|^2\eta^2\min\{|u|^{\beta-1},L\}+
			2 \eta|u| \nabla|u|\nabla \eta \min\{|u|^{\beta-1},L\}\\
			&&+2|u|^2|\nabla\eta|^2\min\{|u|^{\beta-1},L\})\\
			&\leq &\int_{\R^N} (  |  v(x)||\overline{\phi}|+  |v(x)||u|)||\overline{\phi}|    +2|u|^2|\nabla\eta|^2\min\{|u|^{\beta-1},L\}.
	\end{eqnarray*}}
	\normalsize
	Doing $L \rightarrow \infty$ 
	{\small
		\begin{eqnarray}\label{meio}
		\nonumber			\lefteqn{ \frac{1}{2}\int_{\R^N}  | \nabla| u||^2\eta^2 |u|^{\beta-1} \leq  2\int_{\R^N} |\nabla\eta|^2|u|^{\beta+1}+\int |v(x)||1+|u|||u|^{\beta}\eta^2}\\
		&= &2\int_{\R^N} |\nabla \eta|^2|u|^{\beta+1}+C\int |v(x)||u|^{\beta}\eta^2+C\int |v(x)||u|^{\beta+1}\eta^2.
		\end{eqnarray}}
	\normalsize
Still doing $\omega = |u|^{\frac{\beta +1}{2}}$  
	{\small
		\begin{eqnarray}\label{1.5}
		\frac{2}{(\beta +1)^2}\int_{\R^N}  | \nabla \omega|^2\eta^2   &\leq &2\int_{\R^N} |\nabla \eta|^2\omega^2+C\int |v(x)|\omega^2\eta^2+C\int |v(x)|\omega^{\frac{2\beta}{\beta+1}}\eta^2.
		\end{eqnarray}}
	\normalsize
	For the last two terms of (\ref{1.5}) we have
	{\small
		\begin{eqnarray*}
			\int |v(x)|\omega^2\eta^2   &\leq &\left(\int_{\R^N} |v(x)|^{\frac{N}{2}}\right)^{\frac{2}{N}} \left(\int_{\R^N}(\eta \omega)^{2^*} \right)^{\frac{N-2}{N}},
	\end{eqnarray*}}\normalsize
	and also
	{\small
		\begin{eqnarray*}
			\int |v(x)|\omega^{\frac{2\beta}{\beta+1}}\eta^2 &= &\int |v(x)|(\omega\eta)^{\frac{2\beta}{\beta+1}}\eta^{\frac{2}{\beta+1}}\\
			&\leq &||v||_{\frac{N}{2}} ||\omega \eta||_{2^*}^{\beta \frac{N-2}{N}}||\eta||_{2^*}^{\frac{N-2}{N}}.
	\end{eqnarray*}}
	\normalsize
	Now, to show the integrability of the first term of inequality (\ref{1.5}) note that
	{\small
		\begin{eqnarray*}
			\int_{\R^N}  | \nabla( \omega \eta)|^2 &\leq &2\int_{\R^N} |\nabla\omega|^2\eta^2+2\int|\nabla\eta|^2\omega^2
	\end{eqnarray*}}\normalsize
which together with (\ref{meio}) gives us
	{\small
		\begin{eqnarray*}
			\int_{\R^N}  | \nabla( \omega \eta)|^2 &\leq &C(\beta-1)^2\int_{\R^N} a\omega^2\eta^2  +  2((\beta-1)^2-1)\int|\nabla\eta|^2\omega^2.
	\end{eqnarray*}}
	\normalsize
	By Sobolev and Holder
	{\small
		\begin{eqnarray*}
			S^2\left(\int_{\R^N} (\omega \eta)^{2^*}\right)^{\frac{N-2}{2}}&\leq& C (\beta -1)^2\left(\int_{\R^N}|v(x)|^\frac{N}{2}  \right)^{\frac{ 2}{N}}\left(\int_{\R^N} (\omega \eta)^{2^*}\right)^{\frac{N-2}{2}}\\
			&+& ((\beta-1)^2-1)   \int_{\R^N} (|\nabla\eta|^2\omega^2)+(\beta-1)^2\int |v(x)|\omega^{\frac{2\beta}{\beta+1}}\eta^2, 
	\end{eqnarray*}}\normalsize
	on what $S=\inf\{\int_{\R^N}|\nabla u|^2dx; u \in C_0^\infty(\R^N); \;\int_{\R^N}|u|^{2^*}dx=1\}$ is the Sobolev constant. Still, we chose a convenient $ R> 0 $ so that
	$$ C(\beta-1)^2\left(\int_{|x|>R} |v(x)|^\frac{N}{2}\right)^{\frac{2}{N}} \leq \frac{S^2}{2} .  $$
	Now, assuming that $\mbox{supp}\eta\subset (|x|>R)  $ we have
	$$\int_{|x|<R}(\omega\eta)^{2^*}=0,$$
	 so,
	{\small
		\begin{eqnarray}\label{4a}
		S^2\left(\int_{\R^N} (\omega \eta)^{2^*}\right)^{\frac{N-2}{2}}&\leq& 4 ((\beta-1)^2-1)   \int_{\R^N} |\nabla\eta|^2\omega^2.
		\end{eqnarray}}
	\normalsize
	To proceed with the interaction we will take a $\eta \in C^1(\R^N,[0,1])$, with $\eta(x)=1$ in $B(x_0,r_1),$ $\eta(x)=0 $ in $ \R^N \setminus \{B(x_0,r_1)\} $ and already $ |\nabla\eta(x)|\leq \frac{2}{r_2-r_1}$ in $\R$, where $1\leq r_1<r_2\leq2.$ Moreover, we choose $x_0$ and $r_2$ so that $B(x_0,r_2) \subset (|x|>R)$.
	With this, it follows from (\ref{4a}) that
	{\small
		\begin{eqnarray*}\label{4}
			\left(\int_{ B(x_0,r_1)} \omega  ^{2^*}\right)^{\frac{1}{2^*}}&\leq&\frac{2}{S} ((\beta+1)^2+1)^{\frac{1}{2}}\frac{2}{r_2-r_1}\left(\int_{ B(x_0,r_1) }  \omega^2\right)^{\frac{1}{2}} \\
			&\leq&T\frac{\beta+1}{r_2-r_1}\left(\int_{ B(x_0,r_1) }  \omega^2\right)^{\frac{1}{2}} 
	\end{eqnarray*}}\normalsize
	with T an absolute constant. Making $\gamma =\beta +1=2^*$ and $  \chi=\frac{N}{N-2}$ we get
	{\small
		\begin{eqnarray*}
			\left(\int_{B(x_0,r_1)} |u|^{\gamma \chi}\right)^{\frac{1}{\gamma \chi}}&\leq&   \left(\frac{T\gamma}{r_2-r_1}\right)^{\frac{2}{\gamma }}+ \left(\int_{B(x_0,r_1)} |u|^{\gamma  }\right)^{\frac{1}{\gamma  }}.
	\end{eqnarray*}}\normalsize
	To iterate the inequality, which follows with $ \gamma \geq 2^* $, we take $s_m=1+2^{-m}$, $r_1=s_m,$ $r_2=s_{m-1}$ and we replaced $\gamma = 2^*$ by $\gamma \chi^{m-1}$, to $m=1,2,...$. In this way we obtain
	{\small
		\begin{eqnarray*}
			\left(\int_{B(x_0,s_m)} |u|^{\gamma \chi^m}\right)^{\frac{1}{\gamma \chi^m}}&\leq& \left(\frac{T\gamma\chi^{m-1}}{s_{m-1}-s_m}\right)^{\frac{2}{\gamma\chi^{m-1}}}
			\left(\int_{B(x_0,s_{m-1})} |u|^{\gamma\chi^{m-1}}\right)^{\frac{1}{\gamma \chi^{m-1}}}\\
			& = &(T\gamma)^{\frac{2}{\gamma \chi^{m-1}}}
			2^{\frac{2m}{\gamma \chi^{m-1}}}
			\chi^{\frac{2(m-1)}{\gamma\chi^{m-1}}}
			\left(\int_{B(x_0,s_{m-1})} |u|^{\gamma\chi^{m-1}}\right)^{\frac{1}{\gamma \chi^{m-1} }}.
	\end{eqnarray*}}\normalsize
	By induction
	{\small
		\begin{eqnarray*}
			\left(\int_{B(x_0,s_m)} |u|^{\gamma \chi^m}\right)^{\frac{1}{\gamma \chi^m}}&\leq&
			(T\gamma)^{\frac{2}{\gamma}\sum_{j=0}^{m-1}\frac{1}{\chi^j}}
			2^{\frac{2m}{\gamma }\sum_{j=0}^{m-1}\frac{j+1}{\chi^j}}
			\chi^{\frac{2}{\gamma}\sum_{j=0}^{m-1}\frac{j}{\chi^j}}
			\left(\int_{B(x_0,s_0)} |u|^{\gamma }\right)^{\frac{1}{\gamma  }}
	\end{eqnarray*}}\normalsize
	for all  $m \in \N $.
	As $s_0=2$ and as $m\rightarrow \infty$ we get $s_m\rightarrow 1$, we deduce by doing $m\rightarrow \infty$ that exists $R>0$ and $C>0$ such that for all $B(x_0,2)\subset(|x|>R)$ 
	{\small
		\begin{eqnarray*}
			\sup_{B(x_0,1)}|u(x)|&\leq&C \left(\int_{B(x_0,2)} |u|^{\gamma }\right)^{\frac{1}{\gamma  }}.
	\end{eqnarray*}}\normalsize
	As  $\int |u|^{\gamma } < \infty $, then $\int_{B_{(x_i,R)}^C} |u|^{\gamma } \rightarrow 0$ if $ R\rightarrow \infty$ whence we get $u(x) \rightarrow 0 $ as $|x| \rightarrow \infty .$
	
To prove the limitation of $ u$ on the ball $ B (0, R) $ we set $ \bar {x} \in B (0, R) $ and we choose $r>0$ such that
	$$ (\beta+1)^2\left(\int_{B(\bar{x}, r)}b^{\frac{N}{2}}\right)^{\frac{2}{N}}\leq \frac{S}{2}$$
	\noindent and supose $\eta$ with support in $B(\bar{x}, r.)$
We then repeat the previous argument with an appropriate rescaling on $ B (\bar {x}, r) $ to obtain the $ u$ limit on $B(\bar{x}, \frac{r}{2})$.
	Being $\overline{B(0,R)}$ compact, and since $u $ is bounded in each compact $ B (x, \frac{r}{2}) $ for each $ x \in B (0, R) $, we conclude that  $\overline{B(0,R)}$ has finite subcoverage of compacts giving us that $ u$ is bounded in $B(0,R).$
	With this and the fact shown in the first part of this theorem we obtain $u\in L^{\infty}$.
\end{proof}

\end{document}